\newtheorem*{theorem*}{Theorem}
\newtheorem{Theorem}{Theorem}[section]
\newtheorem{Corollary}[Theorem]{Corollary}
\newtheorem{Lemma}[Theorem]{Lemma}
\newtheorem{Definition}[Theorem]{Definition}
\newtheorem{Proposition}[Theorem]{Proposition}
\newtheorem{Remark}[Theorem]{Remark}
\newtheorem{Example}[Theorem]{Example}
\newcommand{\osc}{\mathrm{osc}}
\newcommand{\Lop}{\mathcal{L}}
\newcommand{\R}{{\mathbb R}}
\newcommand{\N}{{\mathbb N}}
\newcommand{\J}{\mathcal{J}}
\newcommand{\la}{\left\langle}
\newcommand{\ra}{\right\rangle}
\def \tr {\mathrm{tr}}
\def \H {{\mathbb H}}
\def \Hn {{\mathbb H}^n}
\def \L {\mathcal{L}}
\def \e {\epsilon}
\def \O {\Omega}
\def \r {\mathcal R}
\def \a {\alpha}
\def \t {\tau}
\def \la {\left\langle }
\def \ra {\right\rangle}
\def \tr {\mathrm{tr}}
\begin{document}

\title[Boundary regularity for subelliptic equations in $\mathbb{H}^n$]{
Boundary regularity for subelliptic equations\\ in the Heisenberg group}
\author[F. Abedin and G. Tralli]{Farhan Abedin and Giulio Tralli} 
\address{Department of Mathematics, Lafayette College, Easton, PA 18042}
\email{abedinf@lafayette.edu}
\address{Dipartimento di Matematica e Informatica, Universit\`a di Ferrara, Via Machiavelli 30, 44121 Ferrara, Italy}
\email{giulio.tralli@unife.it}

\date{\today}
\keywords{degenerate ellipticity, non-divergence form equations, a priori estimates, growth lemma, inhomogeneous Harnack inequality, regularity at characteristic points.}
\subjclass{35J70, 35R05, 35H20, 35B45}

\begin{abstract}
We prove boundary H\"older and Lipschitz regularity for a class of degenerate elliptic, second order, inhomogeneous equations in non-divergence form structured on the left-invariant vector fields of the Heisenberg group. Our focus is on the case of operators with bounded and measurable coefficients and bounded right-hand side; when necessary, we impose a dimensional restriction on the ellipticity ratio and a growth rate for the source term near characteristic points of the boundary. For solutions in the characteristic half-space $\{t>0\}$, we obtain an intrinsic second order expansion near the origin when the source term belongs to an appropriate weighted $L^{\infty}$ space; this is a new result even for the frequently studied sub-Laplacian.
\end{abstract}

\maketitle

%%%%%%%%%%%%%%%%%%%%%%%%%%%%%%%%%%
%%%%%%%%%%%%%%%%%%%%%%%%%%%%%%%%%%

\section{Introduction}

%%%%%%%%%%%%%%%%%%%%%%%%%%%%%%%%%%
%%%%%%%%%%%%%%%%%%%%%%%%%%%%%%%%%%

The regularity of solutions of uniformly elliptic second order equations in non-divergence form with bounded and measurable coefficients has been an active field of study for several decades. Some of the landmark achievements in this area are the scale-invariant Harnack inequality and interior H\"older estimates, which were proved using the powerful techniques introduced by Krylov-Safonov \cite{KrylovSafonov80} for strong solutions of linear equations and Caffarelli \cite{Caffarelli89} for viscosity solutions of fully nonlinear equations. The Krylov-Safonov approach can be traced back to earlier influential work by Landis \cite{LandisDokl, LandisBook}.

As for boundary regularity, classical barrier arguments yield the Hopf lemma and Lipschitz estimates at the boundary when the domain satisfies interior and exterior sphere conditions and the boundary data has sufficient regularity; we refer the reader to the survey \cite{ApushkinskayaNazarovSurvey} for appropriate references. Work of Miller \cite{Miller67} and Michael \cite{
Michael81} establishes boundary H\"older regularity results in domains satisfying an exterior cone condition through the construction of barriers adapted to the boundary geometry; see also \cite{ChoSafonov, Safonov2018} for various generalizations. An important H\"older estimate for the normal derivative was obtained by Krylov \cite{Krylov83} to establish solvability of the Dirichlet problem for fully nonlinear equations.

The aforementioned works have spurred the development of analogous regularity results for equations that are not uniformly elliptic. The literature encompassing such results is vast, and we will not attempt to survey this immense body of work here. We simply note that the precise manner in which the ellipticity becomes degenerate/singular necessarily influences the approach to regularity.

In this paper, our focus is on a class of degenerate elliptic equations in non-divergence form structured on the left-invariant vector fields of the Heisenberg group, one of the prototypical non-Abelian Lie groups. Specifically, given a matrix field $A : \R^{2n+1} \to \R^{2n \times 2n}$ satisfying the uniform ellipticity condition
\begin{equation}\label{ellipticity-of-A}
0<\lambda \mathbb{I}_{2n}\leq A(x,t) \leq \Lambda \mathbb{I}_{2n} \qquad \text{for all } (x,t) \in \R^{2n+1},
\end{equation}
we will study solutions of the equation $\Lop_A u = f$ in a bounded domain $\O$, where $\L_A$ is defined in \eqref{operator-definition} and can be written as
\begin{equation}\label{L-in-coords}
\L_A = \tr \left(\mathcal{A} D^2 \cdot \right) \quad  
\mathcal{A}(x, t) = \left( \begin{array}{cc}
A(x,t) & 2 A(x,t) \J x \\
2 (A(x,t) \J x)^T & 4 \la A(x,t) \J x, \J x \ra \end{array} \right) \in \R^{(2n+1)\times(2n+1)},
\end{equation}
$D^2$ denotes the Hessian operator in $\R^{2n+1}$, and $\J$ is the standard $2n \times 2n$ symplectic matrix 
\begin{equation}\label{standard-symplectic}
\J := \left( \begin{array}{cc}
0 & -\mathbb{I}_n \\
\mathbb{I}_n & 0 \end{array} \right).
\end{equation}
The source term $f$ is assumed to belong in an appropriate subset of $L^{\infty}(\O)$. Our goal is to establish various \emph{a priori} estimates for $u$ and its derivatives near the boundary $\partial \O$. We note that even though the matrix $A$ is uniformly elliptic, the operator $\Lop_A$ is always degenerate elliptic. Indeed, it is straightforward to verify that for all $(x,t) \in \O$, 
\[
\langle\mathcal{A}(x,t) \zeta,\zeta\rangle = \langle A(x,t)(\xi + 2\t \J x), (\xi + 2\t \J x)\rangle \quad \text{for any } \zeta = (\xi, \t) \in \R^{2n} \times \R.
\]
Therefore, $\mathcal{A}(x,t)$ is non-negative definite for each $(x,t) \in \O$, but the kernel of $\mathcal{A}(x,t)$ is one-dimensional at each $(x,t) \in \O$ for any matrix field $A$ satisfying \eqref{ellipticity-of-A}.

The study of the regularity theory for operators similar to $\L_A$ was initiated in the seminal works of Kohn \cite{Kohn65}, Folland \cite{Folland73, Folland75}, Folland-Stein  \cite{FollandStein74}, and Jerison \cite{Jerison81-1, Jerison81-2}; we refer to the recent survey \cite{Folland2021} for a detailed historical account and motivating connections with CR geometry. Much is known about the sub-Laplacian (also known as the real part of the Kohn-Laplacian), which corresponds to $A = \mathbb{I}_{2n}$; see the monograph \cite{BLU2007}. For operators in divergence-form, the classical De Giorgi-Nash-Moser program has been successfully adapted to this degenerate setting. There has been some progress on developing a regularity theory for the non-divergence form operator $\L_A$ with minimal regularity hypotheses on the coefficient matrix $A$ \cite{AGT2017, GT2011, T2014}, but the biggest obstacle to proving a Caffarelli-Krylov-Safonov result for $\L_A$ is the lack of an appropriate Aleksandrov-Bakelman-Pucci (ABP) type maximum principle.  

Developing a satisfactory boundary regularity theory for $\L_A$ has also proved to be challenging due to subtle issues arising from the presence of so-called characteristic points on the boundary (in the sense of Fichera \cite{Fichera}), which are points where the normal vector $\nu$ of $\partial \O$ belongs to the kernel of the matrix $\mathcal{A}$. For a large family of degenerate-elliptic operators, boundary regularity at non-characteristic points is established in the classical works of Kohn-Nirenberg \cite{KohnNirenberg65} and Oleinik-Radkevi\v{c} \cite{OR}. In the special case of the Heisenberg group, a detailed investigation at non-characteristic points was carried out by Jerison \cite{Jerison81-1} for the sub-Laplacian. More recently, there has been a flurry of activity  \cite{BaldiCittiCupini2019, BanerjeeGarofaloMunive2019, BanerjeeGarofaloMunive-Annalen2024, Citti-Giovannardi-Sire} on obtaining sharp Schauder-type regularity results around non-characteristic boundary points under various assumptions on the operators and the domain boundary. The counterpart of such regularity results around characteristic boundary points is, at present, in an incomplete state. Both positive and negative results in this direction have been established by Jerison in the important work \cite{Jerison81-2}; we will elaborate upon this below. Potential theoretic techniques (see, e.g., \cite{Bony, BLU2007, LU2010}) have also proven to be quite effective in establishing boundary H\"older estimates at both characteristic and non-characteristic points for solutions to Dirichlet-type problems for a large class of equations, provided the coefficients of the operator has a modulus of continuity and the domain satisfies certain exterior metric/capacitary assumptions. Lipschitz estimates and Poisson-kernel bounds for $\Delta_X$-harmonic functions under exterior ball conditions were obtained in \cite{Lanconelli-Uguzzoni-PoissonKernel}, and in more general settings in \cite{CapognaGarofaloNhieu02}. The second author, in a previous work with Martino \cite{MartinoTralli2016}, has also established an analogue of the Hopf-Oleinik lemma for non-divergence form operators $\L_A$ with bounded and measurable coefficients at characteristic points of the boundary assuming the domain satisfies an interior touching ball condition.
 
\subsection{Main Results} Let us now discuss our results informally, making references to specific theorems in the body of the text for precise statements. We note that our regularity results are in terms of the metric $d$ compatible with the homogeneous group structure (see \eqref{defmetric}) and the geometric hypotheses on the domain boundary are with respect to metric balls $B_r$. Our approach is modeled on Landis-type growth lemmas (see Theorems \ref{growthlemmalandisH} and \ref{bogrth}) and hinges on comparison principle arguments. The constructions of the necessary barrier functions are tailored to the geometric properties of the boundary; this is captured by the definitions of the positive exterior density, exterior ball containment, and exterior touching ball conditions, which can be found in Sections \ref{sec:Holder} and \ref{sec:lipschitz}. Note that these geometric conditions do not preclude the existence of characteristic points at the boundary. 

For operators $\L_A$ satisfying a dimensional restriction on the ellipticity ratio $\frac{\Lambda}{\lambda}$ (see \eqref{CordesLandis} for the precise statement), we prove
\begin{enumerate}
\item[(i)] a scale-invariant inhomogeneous Harnack inequality, Theorem \ref{thm:Harnack};
\item[(ii)] uniform boundary H\"older estimates in domains satisfying a positive exterior density condition, Theorem \ref{thm:bdy-Holder};
\item[(iii)] a pointwise second order asymptotic expansion near the origin for solutions vanishing on the boundary of the characteristic half-space $\{t > 0\}$, Theorem \ref{ScaleInvariantHolderEstimateNormalDerivative}.
\end{enumerate}

The assumption \eqref{CordesLandis} made in the aforementioned results is sometimes referred to as a \emph{Cordes-Landis condition} (see \cite{T2014}) and, while admittedly restrictive, it is at the time of this writing the weakest hypothesis on the coefficients of $\L_A$ under which an interior Harnack inequality is known to hold for homogeneous equations. Indeed, the proof of the inhomogeneous growth lemma, Theorem \ref{growthlemmalandisH} below, which is a key ingredient in the proof of the inhomogeneous Harnack inequality, Theorem \ref{thm:Harnack}, relies on the fact that a certain barrier function \eqref{definitionofUE} is a subsolution when \eqref{CordesLandis} holds. As such, \eqref{CordesLandis} forms the bottleneck in our ability to obtain regularity results with no a priori restriction on the ellipticity ratio, which is a well known open problem in the field, even for homogeneous equations.

This makes our second set of results noteworthy, as they hold for operators $\L_A$ with bounded, measurable coefficients, but \emph{arbitrary} ellipticity ratio. We prove
\begin{enumerate}
\item[(iv)] uniform boundary H\"older regularity in domains satisfying an exterior ball containment condition, Theorem \ref{thm:bdy-Holder};
\item[(v)] pointwise boundary Lipschitz regularity in domains satisfying an exterior touching ball condition, Theorem \ref{thm:bdy-Lip};
\item[(vi)] a ``linear-in-$t$'' growth estimate for solutions vanishing on the boundary of the characteristic half-space $\{t > 0\}$, Theorem \ref{primadinocond}.
\end{enumerate}

The fact that we are able to dispense with the assumption \eqref{CordesLandis} in cases (iv)-(v)-(vi) is loosely related to the principle that solutions of elliptic equations behave better at the boundary than in the interior, as the boundary geometry and Dirichlet data can be exploited to create useful barriers. 

Let us comment further on an aspect that plays a pervasive role in this work, which is the behavior of the inhomogeneous source term $f$. In the results (i)-(ii)-(iv) listed above, which concern interior and boundary H\"older regularity of solutions, the term $f$ appears in the relevant estimates via the standard $L^\infty$-norm. On the other hand, when investigating boundary derivatives estimates, we are forced to make a more restrictive assumption on $f$, and so the estimates corresponding to the results (iii)-(v)-(vi) depend on a weighted $L^\infty$-norm (see Definition \ref{weightDef}). We note that this phenomenon is unrelated to the non-variational structure of $\Lop_A$ and the lack of regularity of its coefficients; in fact, the situation is no different for the sub-Laplacian. The need for weighted norms actually arises from studying the behavior of derivatives near characteristic points of the boundary. Since we believe that identifying the relevant growth rate on the source term is one of the novel points of this paper, we highlight the sharpest result we have in this direction, Theorem \ref{ScaleInvariantHolderEstimateNormalDerivative}, which establishes a second order expansion near the characteristic boundary point of the half-space $\H^n_+=\{t>0\}$. We state this result here in the special case of the sub-Laplacian.

\begin{theorem*}[Theorem \ref{ScaleInvariantHolderEstimateNormalDerivative} for sub-Laplacian]
Suppose $u \in C^2(B_{4}(0)\cap \H^n_+) \cap C(\overline{B_{4}(0)\cap \H^n_+})$ solves
\begin{equation}\label{BVP-for-sub-Laplacian}
\begin{cases}
\Delta_Xu = f & \quad \text{in } B_{4}(0)\cap \H^n_+, \\
u = 0 & \quad \text{on } B_{4}(0)\cap \{t=0\},
\end{cases}
\end{equation}
for some $f \in L^{\infty}(B_{4}(0)\cap \H^n_+,|x|^2)$. Then $\partial_t u(0,0)$ exists. Moreover, there exist constants $C > 1$ and $\rho_0, \a \in(0,1)$ depending only on $n$ such that for all $(x,t) \in B_{\rho_0}(0)\cap \H^n_+$,
\begin{equation}\label{Schauder-estimate-for-sub-Laplacian}
|u(x,t) - \partial_t u(0,0) t| \leq C\left( ||u||_{L^{\infty}(B_{4}(0)\cap \H^n_+)}  + ||f||_{L^{\infty}(B_{4}(0)\cap \H^n_+,|x|^{2})} \right) d^{2+\alpha}((x,t),(0,0)).
\end{equation}
\end{theorem*}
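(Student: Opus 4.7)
The plan is a dyadic compactness-and-iteration scheme centered on the observation that $t$ is itself $\Delta_X$-harmonic with zero trace on $\{t=0\}$, so $u-ct$ lies in the same class as $u$ for every $c\in\R$.  Normalize so that $M:=\|u\|_{L^{\infty}(B_{4}(0)\cap\H^n_+)}+\|f\|_{L^\infty(B_{4}(0)\cap\H^n_+,|x|^2)}=1$.  The linear-in-$t$ growth estimate (Theorem~\ref{primadinocond}) then gives $|u(x,t)|\le C_* t$ on some $B_{\bar\rho}(0)\cap\H^n_+$ for universal $\bar\rho,C_*$, and in particular forces $|\partial_t u(0,0)|\le C_*$ if the derivative exists.

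\emph{One-step improvement.}  The key claim to prove is that there exist universal $\rho_0,\alpha\in(0,1)$ and $\epsilon_0,C_0>0$ such that whenever $v\in C^2(B_1(0)\cap\H^n_+)\cap C(\overline{B_1(0)\cap\H^n_+})$ satisfies $\Delta_X v=g$ in $B_1(0)\cap\H^n_+$, $v=0$ on $\{t=0\}\cap\overline{B_1(0)}$, $\|v\|_\infty\le 1$, and $\|g\|_{L^\infty(B_1(0),|x|^2)}\le\epsilon_0$, there exists $\tilde c\in\R$ with $|\tilde c|\le C_0$ and
\[
\sup_{B_{\rho_0}(0)\cap\H^n_+}|v-\tilde c\,t|\le \rho_0^{2+\alpha}.
\]
Once this is known, set $c_0=0$ and apply it inductively to the rescaled functions $v_k(x,t):=r_k^{-(2+\alpha)}\bigl(u(r_k x,r_k^2 t)-c_k r_k^2 t\bigr)$ with $r_k=\rho_0^k$; using $\Delta_X t=0$, one checks that $\|v_k\|_\infty\le 1$, $v_k=0$ on $\{t=0\}$, and the source $\Delta_X v_k$ has weighted norm bounded by $r_k^{2-\alpha}\|f\|_w\le\epsilon_0$ once $\alpha<2$ and $k$ exceeds a universal threshold (the finitely many initial scales being absorbed into a universal constant).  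Setting $c_{k+1}=c_k+\tilde c_{k+1}r_k^\alpha$ gives $|c_{k+1}-c_k|\le C_0 r_k^\alpha$ and $\sup_{B_{r_k}(0)\cap\H^n_+}|u-c_k t|\le r_k^{2+\alpha}$; the Cauchy limit $c^*=\lim c_k$ is then $\partial_t u(0,0)$, and standard dyadic interpolation across scales produces \eqref{Schauder-estimate-for-sub-Laplacian}.

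\emph{Compactness reduction.}  To prove the one-step improvement I would argue by contradiction: assume it fails and extract $(v_n,g_n)$ with $\|g_n\|_{L^\infty(|x|^2)}\to 0$ violating the conclusion for fixed $\alpha,\rho_0$ to be chosen.  The uniform bound $|v_n|\le C_* t$ from Theorem~\ref{primadinocond}, together with the Harnack inequality (Theorem~\ref{thm:Harnack}) and the boundary Hölder estimate (Theorem~\ref{thm:bdy-Holder}, which applies at non-characteristic boundary points of $\H^n_+\cap B_1(0)$ and at the characteristic origin because $\H^n_+$ has positive exterior density there), yields a subsequence converging locally uniformly on $\overline{B_{\bar\rho}(0)\cap\H^n_+}$ to some $v_\infty$ that is $\Delta_X$-harmonic in $B_{\bar\rho}(0)\cap\H^n_+$, vanishes on $\{t=0\}$, and satisfies $|v_\infty|\le C_* t$.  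If one can establish that every such $v_\infty$ admits a second-order expansion $v_\infty(x,t)=c_\infty t+O\bigl(d^{2+\beta}((x,t),0)\bigr)$ with some universal $\beta>0$ and $|c_\infty|\le C_*$, then choosing $\alpha<\beta$ and $\rho_0$ sufficiently small yields the admissible $\tilde c=c_\infty$ and the desired contradiction.

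\emph{Main obstacle.}  The crux is establishing this expansion for the $\Delta_X$-harmonic limit $v_\infty$ at the characteristic boundary point.  Schwarz reflection across $\{t=0\}$ is unavailable because the involution $(x,t)\mapsto(x,-t)$ does not preserve the horizontal frame $\{X_i\}$.  The approach I would take is an oscillation-decay argument for the quotient $v_\infty/t$: define $M_k=\sup_{B_{r_k}(0)\cap\H^n_+}v_\infty/t$ and $m_k=\inf_{B_{r_k}(0)\cap\H^n_+}v_\infty/t$, and observe that the nonnegative functions $v_\infty-m_k t$ and $M_k t-v_\infty$ are $\Delta_X$-harmonic in $B_{r_k}(0)\cap\H^n_+$ and vanish on $\{t=0\}$.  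The Harnack inequality on an interior ball $B((0,r_k^2/2),cr_k)\subset B_{r_k}(0)\cap\H^n_+$ (where $t$ is comparable to $r_k^2$) forces at least one of these two nonnegative functions to be $\gtrsim(M_k-m_k)r_k^2$ throughout that ball.  Propagating this interior lower bound down to $B_{r_{k+1}}(0)\cap\H^n_+$ via a Hopf-type linear-in-$t$ lower bound at the characteristic origin — the natural one-sided counterpart to Theorem~\ref{primadinocond}, obtained from a quantified version of the Martino--Tralli barrier of \cite{MartinoTralli2016} — gives $M_{k+1}-m_{k+1}\le(1-\gamma)(M_k-m_k)$ with universal $\gamma>0$.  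The resulting geometric decay of the oscillation of $v_\infty/t$ at $0$ yields its Hölder continuity there and hence the required expansion.
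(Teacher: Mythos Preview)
Your core strategy---oscillation decay for the quotient $v=u/t$ via a Harnack step on an interior slab followed by propagation of that information down to the boundary as a linear-in-$t$ lower bound---is exactly what the paper does. But the paper carries this out \emph{directly} on the inhomogeneous equation, without any compactness reduction. The compactness detour you propose buys nothing here: it introduces a limit passage (where you would need to justify that the uniform limit is again a classical $\Delta_X$-solution, which for the sub-Laplacian is fine by hypoellipticity but for $\mathcal{L}_A$ would be problematic), and it still leaves you with the same oscillation-decay problem for the limit $v_\infty$. Since the inhomogeneous term $f\in L^\infty(\cdot,|x|^2)$ scales \emph{better} than the leading term under the dyadic iteration (it contributes $O(r^2)$ to the oscillation at scale $r$), there is no reason to strip it out first.

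The substantive issue is your propagation step. You write that the interior lower bound is pushed to $B_{r_{k+1}}\cap\H^n_+$ ``via a Hopf-type linear-in-$t$ lower bound\ldots obtained from a quantified version of the Martino--Tralli barrier''. This is precisely the heart of the proof, and it is not supplied by \cite{MartinoTralli2016}: that paper gives a pointwise Hopf inequality at a single characteristic touching point under an interior ball condition, not a barrier that simultaneously controls $w/t$ on an entire rectangular slab. The paper closes this gap with an explicit, hand-built comparison function
\[
\phi_1(x,t)=t\Bigl(\delta+\tfrac{t}{r^2}-2\delta\,\tfrac{|x|^4}{r^4}\Bigr),
\]
which vanishes on $\{t=0\}$, is nonpositive on the lateral boundary $\{|x|=r\}$, and satisfies $\mathcal{L}_A\phi_1\ge 4\lambda|x|^2/r^2$ on the thin cylinder $\rho(r)=\{|x|<r,\ 0<t<\delta r^2\}$ for a structural $\delta$. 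Comparison of $m\phi_1+\tfrac{F}{8\lambda}t(t-\delta r^2)$ with $w=u-m(2Kr)t$ yields the crucial quantitative link
\[
\inf_{\rho^+(r)}\frac{w}{t}\ \le\ \frac{2}{\delta}\,\inf_{\rho(r/2)}\frac{w}{t}+\frac{r^2}{\lambda}\|f^+\|_{L^\infty(\rho(r),|x|^2)}
\]
(Proposition~\ref{firstestimate}). Combined with the Harnack inequality on $\rho^+(r)$ (Proposition~\ref{secondestimate}) applied to $u-m(2Kr)t$ and $M(2Kr)t-u$, this gives $\osc_{\rho(r)}v\le\gamma\,\osc_{\rho(2Kr)}v+O(r^2)$ with a structural $\gamma<1$, and the standard iteration finishes. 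So your outline is right in spirit, but the barrier $\phi_1$ is the missing idea; without it the ``propagation'' step is an assertion rather than an argument.
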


To more concretely illustrate the degree of regularity implied by the estimate \eqref{Schauder-estimate-for-sub-Laplacian}, we recall the expansion of a function $u$ in terms of the gradient and Hessian with respect to the left-invariant vector fields $X$ of $\H^n$ (see \eqref{vector-fields-of-Hn}, \eqref{horizontal-gradient}, \eqref{horizontal-Hessian} below). The (intrinsic) second order Taylor polynomial of a sufficiently smooth function $u$ at $(0,0)$ is given by
$$T_2 u(x,t):=u(0,0)+\langle \nabla_Xu(0,0),x\rangle + \frac{1}{2}\langle D^2_Xu(0,0) x,x \rangle + \partial_t u(0,0) t.$$
We refer to \cite[formula (20.24)]{BLU2007} for a derivation. Since the solution $u$ of \eqref{BVP-for-sub-Laplacian} vanishes on $\{t=0\}$, the only non-trivial term in $T_2 u$ is $\partial_t u(0,0)$. Therefore, \eqref{Schauder-estimate-for-sub-Laplacian} implies $u$ separates from its second order Taylor polynomial at the origin at a rate of $d^{2+\a}$, which can be thought of as a ``punctual'' $C^{2,\a}$ type estimate at the only characteristic point of the domain $\H^n_+$.

Jerison \cite{Jerison81-2} shows that, for the scale-invariant domains $\Gamma_M := \{t>M|x|^2\}$ for $M \in \R$, the validity of higher-order estimates around characteristic boundary points is tied to the value of $M$. Indeed, the regions $\Gamma_M$, despite being smooth domains, behave like 1-homogeneous cones do in the Euclidean setting. Consequently, the growth rate near the origin of a $\Delta_X$-harmonic function in $\Gamma_M$ that vanishes on $\partial \Gamma_M$ is determined by the opening of the parabola $t = M|x|^2$, similar to how the growth rate near the vertex of a harmonic function in a convex cone that vanishes on the boundary of the cone is dependent on the cone angle. For the case $M = 0$ corresponding to the half-space $\H^n_+$, second order Schauder-type estimates near the origin do not necessarily hold for equations with source term $f \in L^{\infty}$, as alluded to in  \cite[Theorems 5.1', 5.2' and discussion at the end of pg. 235]{Jerison81-2}). The significance of Theorem \ref{ScaleInvariantHolderEstimateNormalDerivative} is that it identifies a subclass of $L^{\infty}$ source terms for which solutions of \eqref{BVP-for-sub-Laplacian} enjoy second order estimates. A more detailed discussion of these matters is postponed to Section \ref{sec:halfspace}, where we provide an explicit example of $f\in L^\infty$ for which Theorem \ref{ScaleInvariantHolderEstimateNormalDerivative} fails to hold (see Example \ref{example}), and also provide an application to Dirichlet-type problems (see Corollary \ref{ScaleInvariantHolderEstimateNormalDerivativeWithDirichletData}).

We also point out that the norm of the solution $u$ on the right-hand side of the estimate \eqref{Schauder-estimate-for-sub-Laplacian} is \emph{not} a weighted one. Comparing with the corresponding estimate in the uniformly elliptic case (see, for instance, 
\cite[Theorem 1.2.16]{HanBook}), one expects to see the $L^{\infty}$ norm of the normal derivative on the right hand side of the estimate, which in the setting of $\H^n_+$ corresponds to the weighted norm $||u||_{L^{\infty}(B_{4}(0)\cap \H^n_+, t)}$. Theorem \ref{primadinocond} shows that this weighted norm can be controlled by the usual $L^{\infty}$ norm. Such a ``linear-in-$t$'' estimate is independently interesting, as it shows that in $\H^n_+$, one can do better than the general Lipschitz regularity results obtained in Section \ref{sec:lipschitz} which, due to the anisotropic nature of the metric \eqref{defmetric}, yield an estimate of order $\sqrt{t}$. As illustrated by the second order Taylor polynomial $T_2 u$, linear growth in the $t$ variable corresponds to second order behavior, and so upgrading to an estimate that is of order $t$ is a non-trivial task.

\subsection{Outline of the Paper} Section \ref{sec:preliminaries} conveys some preliminary notions in the Heisenberg group and identifies a family of subsolutions that will be useful for barrier constructions. The important inhomogeneous growth lemma is proved in Section \ref{sec:growthlemma}, where some standard applications to interior regularity are also provided for the reader's convenience. The study of boundary regularity begins in Section \ref{sec:Holder}, where we prove H\"older estimates under appropriate geometric hypotheses on the domain boundary. In Section \ref{sec:lipschitz}, we move on to study boundary regularity of derivatives and introduce the class of weighted $L^{\infty}$ source terms that are necessary to deal with boundary behavior at characteristic points. The final Section \ref{sec:halfspace} showcases the higher order results that can be obtained in the special setting of $\H^n_+$.

\subsection*{Acknowledgment} FA acknowledges support from the National Science Foundation research grant DMS-2246611. GT is partially supported by the Gruppo Nazionale per l’Analisi Matematica, la Probabilità e le loro Applicazioni (GNAMPA) of the Istituto Nazionale di Alta Matematica (INdAM).

%%%%%%%%%%%%%%%%%%%%%%%%%%%%%%%%%%
%%%%%%%%%%%%%%%%%%%%%%%%%%%%%%%%%%

\section{Setup and Preliminaries}\label{sec:preliminaries}

%%%%%%%%%%%%%%%%%%%%%%%%%%%%%%%%%%
%%%%%%%%%%%%%%%%%%%%%%%%%%%%%%%%%%

Denote points in $\R^{2n+1}$ by $z = (x,t) = (x_1,\ldots, x_{2n}, t) \in \R^{2n} \times \R$, and denote by $\la \cdot, \cdot \ra$ the standard inner product in $\R^{2n}$. Let $\mathbb{I}_n$ denote the $n \times n$ identity matrix, and let $\J$ denote the standard $2n \times 2n$ symplectic matrix, defined in \eqref{standard-symplectic}.

The Heisenberg group $\Hn$ is the homogeneous, stratified Lie group $(\R^{2n+1}, \circ, \delta_r)$ equipped with the (non-commutative) composition law
\[
(x,t) \circ (\xi, \tau) := \left(x + \xi, t + \tau + 2\la \J x, \xi \ra \right),
\]
and the family of anisotropic dilations
\[
\delta_{r} : \Hn \rightarrow \Hn, \ \delta_{r}(x,t) = (r x, r^2 t), \ r>0.
\]
The identity element of $\Hn$ is $0=(0,0)$ and the inverse is $(x,t)^{-1} := (-x,-t)$. The function 
$$
\rho(z) = \rho(x,t) := (|x|^4 + t^2)^{\frac{1}{4}}
$$
defines a $\delta_r$-homogeneous norm on $\Hn$, which induces the metric
\begin{equation}\label{defmetric}
d(z, \zeta) := \rho(z^{-1} \circ \zeta).
\end{equation}
The corresponding metric balls are denoted $B_r(z) := \left\{\zeta \in \R^{2n+1} : d(z, \zeta) < r \right\}$; when $z=0$, we will often simply write $B_r$. We have the equivalent characterizations 
$$B_r(z) =  z \circ B_r = z \circ (\delta_r (B_1)).$$
The Haar measure on $\Hn$ is $(2n+1)$-dimensional Lebesgue measure, which we will denote by $| \cdot |$. As the Jacobian of the map $z \mapsto \delta_r (z)$ is $r^{2n+2}$, we have $|B_r(z)| = |B_r| = r^{2n+2}|B_1|$ for all $z\in\Hn$ and $r>0$; the number $Q := 2n+2$ is called the homogeneous dimension of $\H^n$.

The Lie algebra of $\Hn$ is generated by the horizontal vector fields
\begin{equation}\label{vector-fields-of-Hn}
X_j := \partial_{x_j} + 2(\J x)_j \partial_t, \qquad j = 1,\ldots, 2n.
\end{equation}
The only non-trivial commutation relations among these vector fields are
\[
[X_j, X_{j+n}] = 4 \partial_t, \qquad j = 1, \ldots, n.
\]
Let us note some invariance properties of the vector fields $X_j$. For any $u \in C^1(\R^{2n+1})$, we have the translation invariance property
\[X_i [u(\zeta^{-1}\circ z)] = (X_i u) (\zeta^{-1}\circ z) \quad \text{for all } z, \zeta \in \H^n.\]
Similarly, for any $r > 0$, the rescaled function $u_r(z) := u(\delta_r(z))$ satisfies the dilation property
\[X_j u_r(z) = r X_j u(\delta_r(z)) \quad \text{for all } z \in \H^n.
\]
Denote the horizontal gradient of a function $u \in C^1(\R^{2n+1})$ as
\begin{equation}\label{horizontal-gradient}
\nabla_X u := (X_1 u, \ldots, X_{2n} u) \in \R^{2n},
\end{equation}
and the horizontal Hessian of a function $u \in C^2(\R^{2n+1})$ as
\begin{equation}\label{horizontal-Hessian}
D^2_X u := \left(X_{ij} u \right)_{i,j = 1,\ldots, 2n} \in \R^{2n \times 2n}, \qquad \text{where} \ X_{ij} u := \frac{1}{2}\left(X_i X_j u + X_j X_i u \right).
\end{equation}

\subsection{Non-Divergence Form Operators and Subsolutions}
Given constants $0<\lambda\leq \Lambda$, we denote by $M_n(\lambda,\Lambda)$ the set of symmetric $2n\times 2n$ matrices $M$ satisfying the uniform ellipticity condition
\begin{equation}\label{ellipticity}
\lambda \mathbb{I}_{2n} \leq M \leq \Lambda \mathbb{I}_{2n}.
\end{equation}
Let $O \subset \R^{2n}$ denote a fixed open set. We will be concerned with the second order, non-divergence form operators
\begin{equation}\label{operator-definition}
\mathcal{L}_A := \text{tr}\left(A(z) D^2_X \ \cdot \right) = \sum\limits_{i,j = 1}^{2n} a_{ij}(z) X_{ij} = \sum\limits_{i,j = 1}^{2n} a_{ij}(z) X_i X_j,
\end{equation}
where $A(z) = (a_{ij}(z))_{i,j = 1,\ldots, 2n}$ belongs to $M_n(\lambda,\Lambda)$ for each $z\in O$. When $A(z) \equiv \mathbb{I}_{2n}$, $\L_A$ is the standard sub-Laplacian $\Delta_X = \sum\limits_{i=1}^{2n} X_i^2$. The representation of $\L_A$ in the standard coordinates of $\R^{2n+1}$ is given in \eqref{L-in-coords}.

For open and bounded sets $D$ compactly contained in $O$ and $f : \overline{D} \to \R$ belonging to appropriate function spaces (typically subsets of $L^{\infty}(D)$), we will consider sufficiently smooth solutions to the equation $\Lop_A u= f$ in $D$ which may vanish continuously on appropriate subsets of $\partial D$. Our main goal is to establish \emph{a priori} regularity estimates (both in the interior and at the boundary) which are dependent only on structural constants (i.e. any parameter whose value depends solely on $Q$ and $\frac{\Lambda}{\lambda}$) and suitable norms of the right-hand side $f$. 

Since we will rely on barrier arguments, the following well known result will be indispensable.

\medskip
 
\noindent \textbf{Weak Comparison Principle for $\L_A$.}
\emph{
Let $D$ be an open and bounded set compactly contained in $O$. If  $\Lop_A u \geq \Lop_A v$ in $D$ and $u \leq v$ on $\partial D$, then $u \leq v$ in $\overline{D}$.}
\vskip 0.3cm

We conclude this section with the following lemma, where we collect some differential identities and inequalities that will be needed for the construction of barriers in subsequent sections. Let us define
\begin{equation}\label{defPhiPsi}
\phi(z) := \rho(z)^4 = |x|^4 + t^2,\quad\qquad\quad \psi_{\a}(z) := \phi^{-\a}(z) \quad \text{for } \a \in \R.
\end{equation}

\begin{Lemma}\label{subsolutionlemma} 
For any $M\in M_n(\lambda,\Lambda)$, we have
\begin{equation}\label{subsolutionHgeneric}
\tr \left(M D^2_X \psi_{\a}(z)\right) \geq 0 \qquad  \text{for all } z \in \Hn\setminus\{0\} \,  \text{ and  }\,\,\a\geq \frac{1}{4}\left( (Q+1)\frac{\Lambda}{\lambda} - 3\right).
\end{equation}
\end{Lemma}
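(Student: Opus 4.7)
The plan is a direct computation combined with ellipticity estimates. Writing $\psi_\alpha = \phi^{-\alpha}$ and applying the chain rule twice, I would obtain
$$X_{ij}\psi_\alpha \;=\; \alpha(\alpha+1)\phi^{-\alpha-2}(X_i\phi)(X_j\phi) \;-\; \alpha\phi^{-\alpha-1}\, X_{ij}\phi,$$
so tracing against the symmetric matrix $M$ and factoring out the positive quantity $\alpha\phi^{-\alpha-2}$ (the stated range forces $\alpha>0$) reduces the claim to
$$(\alpha+1)\,\langle M\nabla_X\phi,\,\nabla_X\phi\rangle \;\geq\; \phi\,\tr(M\,D^2_X\phi).$$

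The central task is then to compute $\nabla_X\phi$ and $D^2_X\phi$ explicitly. Applying $X_j = \partial_{x_j} + 2(\J x)_j\partial_t$ to $\phi = |x|^4 + t^2$ yields $\nabla_X\phi = 4(|x|^2 x + t\J x)$; a second application followed by symmetrization (in which the antisymmetric $t$-linear contribution vanishes since $\J^T=-\J$) gives
$$D^2_X\phi \;=\; 8\,xx^T + 4|x|^2\,\mathbb{I}_{2n} + 8\,(\J x)(\J x)^T.$$
Two structural consequences will drive the estimate: the orthogonality $\langle x, \J x\rangle = 0$ provides the identity $|\nabla_X\phi|^2 = 16|x|^2\phi$, while the Hessian decomposes as $D^2_X\phi = 12|x|^2 P_V + 4|x|^2 P_{V^\perp}$, where $P_V$ denotes the orthogonal projection onto the two-dimensional subspace $V$ spanned by $x$ and $\J x$ --- which is precisely the plane containing $\nabla_X\phi$.

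After dividing through by the common positive factor $|x|^2\phi$ (the case $x=0$ being trivial, since both sides vanish identically there), the inequality above takes the algebraic form
$$4(\alpha+1)\,\langle M\hat{v},\hat{v}\rangle \;\geq\; 3\,\tr(M|_V) \,+\, \tr(M|_{V^\perp}),$$
where $\hat{v} := \nabla_X\phi/|\nabla_X\phi|\in V$. I would then apply the ellipticity bounds $\langle M\hat{v},\hat{v}\rangle \geq \lambda$, together with the refined estimate $\tr(M|_V) \leq \langle M\hat{v},\hat{v}\rangle + \Lambda$ (exploiting that $\hat{v}$ itself furnishes one of the two orthonormal contributions to $\tr(M|_V)$) and $\tr(M|_{V^\perp}) \leq (Q-4)\Lambda$. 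Substituting reduces the problem to a sufficient condition of the form $(4\alpha + c_1)\lambda \geq c_2\,\Lambda$ with explicit dimensional constants, which the hypothesis $\alpha \geq \tfrac{1}{4}\bigl((Q+1)\tfrac{\Lambda}{\lambda}-3\bigr)$ is readily seen to imply (using $\Lambda\geq\lambda$).

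The main obstacle is essentially bookkeeping: confirming that the commutator contributions $[X_i,X_j]\phi$ drop out after symmetrization so that $D^2_X\phi$ takes the clean rank-structured form above, and using the refined trace bound on $V$ rather than the cruder $\tr(M|_V)\leq 2\Lambda$. Once the explicit Hessian formula is in hand, the ellipticity reduction is essentially immediate; the dimensional factor $Q+1$ in the threshold ultimately reflects the combined weight of the eigenvalues of $D^2_X\phi$ on $V$ and $V^\perp$ as they interact with the trace against $M$.
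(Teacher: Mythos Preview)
Your argument is correct and follows essentially the same route as the paper: compute $\nabla_X\phi$ and $D^2_X\phi$ explicitly, apply the chain rule to $\psi_\alpha=\phi^{-\alpha}$, use $|\nabla_X\phi|^2=16|x|^2\phi$, and close with ellipticity bounds. Your packaging via the spectral decomposition $D^2_X\phi=12|x|^2P_V+4|x|^2P_{V^\perp}$ is a slightly cleaner way of saying what the paper writes in coordinates as $X_{ij}\phi=4\delta_{ij}|x|^2+8x_ix_j+8(\J x)_i(\J x)_j$.

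One point worth flagging: your ``refined trace bound'' $\tr(M|_V)\leq \langle M\hat v,\hat v\rangle+\Lambda$, which exploits that $\hat v$ lies in $V$, is genuinely sharper than what the paper does. The paper bounds $\langle M\tfrac{x}{|x|},\tfrac{x}{|x|}\rangle$ and $\langle M\tfrac{\J x}{|\J x|},\tfrac{\J x}{|\J x|}\rangle$ each by $\Lambda$ and treats $\langle M\hat v,\hat v\rangle$ as an independent term, arriving at the sufficient condition $(4\alpha+3)\lambda\geq(Q+1)\Lambda$. Your route instead yields $(4\alpha+1)\lambda\geq(Q-1)\Lambda$, i.e.\ the smaller threshold $\alpha\geq\tfrac14\bigl((Q-1)\tfrac\Lambda\lambda-1\bigr)$, which the stated hypothesis implies via $\Lambda\geq\lambda$ exactly as you note. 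So your proof not only establishes the lemma but shows its hypothesis is slightly stronger than necessary.
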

\begin{proof} 
We first note the following identities, which hold at any $z=(x,t)\in \Hn$ and follow from direct computation:
\begin{equation}\label{(i)}
X_j \phi(z) = 4x_j |x|^2 + 4t (\J x)_j,
\end{equation}
\begin{equation}\label{(ii)}
X_{ij} \phi(z) = 4\delta_{ji} |x|^2 + 8 x_i x_j + 8 (\J x)_i (\J x)_j.
\end{equation}
Consequently,
$$X_{ij} \psi_\alpha(z) =  \a \phi^{-\alpha-2}(z) \left[(\alpha + 1) X_i \phi(z)X_j \phi(z) - \phi(z)X_{ij} \phi(z) \right]$$
and so, using the identity $| \nabla_X \phi(z)|^2= 16 |x|^2 \phi(z)$, (which is a consequence of \eqref{(i)}), we obtain
$$
X_{ij} \psi_\alpha(z)=\a \phi^{-\alpha-1}(z) \left[16(\alpha + 1)|x|^2 \frac{X_i \phi(z) X_j \phi(z)}{|\nabla_X \phi(z)|^2} - X_{ij} \phi(z) \right].
$$
Hence, for any $M\in M_n(\lambda,\Lambda)$, we have
\begin{align}\label{menouno}
& \tr \left(M D^2_X \psi_{\a}(z)\right) \\
&= \frac{\a}{\phi^{\alpha+1}(z)} \left[16(\alpha + 1)|x|^2 \la M \frac{\nabla_X \phi(z)}{|\nabla_X \phi(z)|}, \frac{\nabla_X \phi(z)}{|\nabla_X \phi(z)|} \ra - \text{tr}\left(M D^2_X \phi(z) \right) \right]\notag \\
&= \frac{4 \a |x|^2}{\phi^{\alpha+1}(z)} \left[4(\alpha + 1) \la M \frac{\nabla_X \phi(z)}{|\nabla_X \phi(z)|}, \frac{\nabla_X \phi(z)}{|\nabla_X \phi(z)|} \ra - \text{tr}(M) - 2 \la M \frac{x}{|x|}, \frac{x}{|x|} \ra - 2 \la M \frac{\J x}{| \J x|}, \frac{\J x}{|\J x|} \ra \right]. \notag
\end{align}
Note that we have used \eqref{(ii)} in the final equality. Since $M \in M_n(\lambda,\Lambda)$, we have for any unit vector $e\in \R^{2n}$ 
$$
\lambda\leq \left\langle Me,e\right\rangle \leq \Lambda \qquad \text{and} \qquad \tr(M) - \left\langle Me,e\right\rangle \leq  (2n-1)\Lambda.
$$ 
Using these inequalities in \eqref{menouno}, we infer
\begin{align}\label{weightss}
\tr \left(M D^2_X \psi_{\a}(z)\right)&\geq  \frac{4 \a |x|^2}{\phi^{\alpha+1}(z)}  \left[(4\alpha + 3)\lambda - (2n+3)\Lambda\right]\notag\\
&=  \frac{4 \lambda \a |x|^2}{\phi^{\alpha+1}(z)} \left( 4\a + 3 - (Q+1)\frac{\Lambda}{\lambda} \right)
\end{align}
which is non-negative if $4\a\geq (Q+1)\frac{\Lambda}{\lambda} - 3$. Note that $(Q+1)\frac{\Lambda}{\lambda} - 3\geq Q-2=2n>0$.
\end{proof}

%%%%%%%%%%%%%%%%%%%%%%%%%%%%%%%%%%
%%%%%%%%%%%%%%%%%%%%%%%%%%%%%%%%%%

\section{Inhomogeneous Growth Lemma}\label{sec:growthlemma}

%%%%%%%%%%%%%%%%%%%%%%%%%%%%%%%%%%
%%%%%%%%%%%%%%%%%%%%%%%%%%%%%%%%%%

Our goal in this section is to prove the so-called inhomogeneous growth lemma, a fundamental result that has roots in the seminal work of Landis \cite{LandisBook} and Krylov-Safonov \cite{KrylovSafonov80}. We begin by stating a condition on the sub-ellipticity ratio of the operator $\Lop_A$ that will make an appearance in several places.
\begin{Definition} We say the operator $\Lop_A$ satisfies the \emph{Cordes-Landis} condition in $O$ if $A(z) \in M_n(\lambda, \Lambda)$ for each $z \in O$ with
\begin{equation}\label{CordesLandis}\tag{CL}
\frac{\Lambda}{\lambda} < \frac{Q+3}{Q+1}.
\end{equation}
\end{Definition}
Note that \eqref{CordesLandis} is equivalent to
\[\frac{1}{4}\left( (Q+1)\frac{\Lambda}{\lambda} - 3\right) < \frac{Q}{4}.\]
We can thus fix a constant $\a$ such that
\begin{equation}\label{CordesLandisa}
\frac{1}{4}\left( (Q+1)\frac{\Lambda}{\lambda} - 3\right)  \leq \a  < \frac{Q}{4}.
\end{equation}
This guarantees the function $\psi_{\a}$ defined in \eqref{defPhiPsi} satisfies the subsolution property \eqref{subsolutionHgeneric} uniformly among the class of operators $\Lop_A$ with $A(z)\in M_n(\lambda,\Lambda)$ for each $z\in O$ and satisfying \eqref{CordesLandis}.

Since $\a <\frac{Q}{4}$, $\psi_{\a}$ is locally integrable around the origin and so, for any $E \subset \Hn$ bounded and measurable, the following function is well defined:
\begin{equation}\label{definitionofUE}
U_E(z) := \int_E  \psi_{\a}(\zeta^{-1}\circ z) \ d\zeta = \int_E  d(z,\zeta)^{-4\a} \ d\zeta.
\end{equation}
This function will be key to the barrier arguments leading to the proof of the growth lemma. The following lemma highlights some of its useful properties.

\begin{Lemma}\label{subsolutionUbounded}
Suppose $\L_A$ satisfies \eqref{CordesLandis} in $O$. Let $\a$ be as in \eqref{CordesLandisa} and $E \subset \Hn$ be a bounded and measurable set. Then
\begin{equation}\label{subsolutionH}
\L_A U_E(z) \geq 0 \quad\mbox{ for all }z\in O\setminus  \overline{E}.
\end{equation}
Moreover, there exist structural constants $\t\geq 4$ and $C_1, C_2, C_3 > 0$ satisfying $2C_1 \leq C_3<2C_2$ such that, if $E \subset B_r(z_0)$ for some $r>0$ and $z_0\in\Hn$, we have the following bounds for $U_E$:
\begin{itemize}
\item[(i)] $U_E(z) \leq C_1 r^{-4\a}|E| \ \text{ for all } z \in \partial B_{\t r}(z_0).$
\item[(ii)] $U_E(z) \leq C_2 r^{-4\a} |B_r(z_0)| \ \text{ for all } z \in B_{\t r}(z_0).$
\item[(iii)] $U_E(z) \geq C_3 r^{-4\a}|E| \ \text{ for all } z \in B_r(z_0).$
\end{itemize}
\end{Lemma}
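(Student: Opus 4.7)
The plan is to handle the subsolution property \eqref{subsolutionH} via differentiation under the integral sign, and then to obtain the quantitative bounds (i)--(iii) by elementary estimates based on the triangle inequality for $d$, supplemented by one polar-type integration in $\H^n$ for the upper bound (ii). The choice of $\tau$ is made at the end to guarantee the relations $2C_1 \leq C_3 < 2C_2$.

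For \eqref{subsolutionH}, I would fix $z \in O \setminus \overline{E}$. In a small neighborhood of such $z$ the integrand $\zeta \mapsto d(z,\zeta)^{-4\a}$ is smooth and uniformly bounded for $\zeta \in E$, together with its horizontal derivatives in $z$, so $U_E$ can be differentiated under the integral sign. For each fixed $\zeta$, iterating the left-translation invariance $X_i[u(\zeta^{-1}\circ \cdot)] = (X_i u)(\zeta^{-1}\circ \cdot)$ recorded in Section~\ref{sec:preliminaries} gives $X_{ij}[\psi_\a(\zeta^{-1}\circ \cdot)](z) = (X_{ij}\psi_\a)(\zeta^{-1}\circ z)$, hence
$$\L_A\bigl[\psi_\a(\zeta^{-1} \circ \cdot)\bigr](z) = \tr\bigl(A(z)\,(D^2_X \psi_\a)(\zeta^{-1}\circ z)\bigr).$$
Since $A(z) \in M_n(\lambda,\Lambda)$ and $\zeta^{-1}\circ z \neq 0$, Lemma~\ref{subsolutionlemma} applied with $M = A(z)$ and the choice of $\a$ in \eqref{CordesLandisa} makes the right-hand side non-negative; integrating over $E$ yields \eqref{subsolutionH}.

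For the bounds, assume $E \subset B_r(z_0)$ and let $\tau \geq 4$ be a parameter to be selected. For (i): if $z \in \partial B_{\tau r}(z_0)$ and $\zeta \in E$, the triangle inequality gives $d(z,\zeta) \geq (\tau - 1)r$, so the bound holds with $C_1 := (\tau-1)^{-4\a}$. For (iii): if $z,\zeta \in B_r(z_0)$ then $d(z,\zeta) \leq 2r$, so the bound holds with $C_3 := 2^{-4\a}$. For (ii): if $z \in B_{\tau r}(z_0)$, then $B_r(z_0) \subset B_{(\tau+1)r}(z)$, hence
$$U_E(z) \leq \int_{B_{(\tau+1)r}(z)} d(z,\zeta)^{-4\a}\,d\zeta = \bigl((\tau+1)r\bigr)^{Q-4\a}\!\int_{B_1}\rho(\eta)^{-4\a}\,d\eta,$$
by left-invariance of Lebesgue measure and the change of variables $\zeta = z \circ \delta_{(\tau+1)r}(\eta)$, using $\rho(\delta_s \eta) = s\rho(\eta)$ and $d\zeta = s^{Q}d\eta$. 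The last integral is finite thanks to $4\a < Q$ from \eqref{CordesLandisa}; rewriting $r^{Q-4\a} = r^{-4\a}|B_r(z_0)|/|B_1|$ produces (ii) with a structural constant $C_2$.

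It remains to pick $\tau \geq 4$ compatible with $2C_1 \leq C_3 < 2C_2$. The first inequality reads $\tau \geq 1 + 2^{1+1/(4\a)}$, and the bound $4\a \geq Q - 2 \geq 2$ noted at the end of the proof of Lemma~\ref{subsolutionlemma} ensures the right-hand side is at most $1 + 2^{3/2} < 4$; hence the choice $\tau = 4$ works. The remaining inequality $C_3 < 2 C_2$ is then immediate from the explicit formulas. The most genuinely non-trivial ingredient in the plan is not any single calculation but the passage from Lemma~\ref{subsolutionlemma} to \eqref{subsolutionH}, which requires the commutation of $\L_A$ with the integral against a singular kernel and exploits the left-invariance of the vector fields in an essential way; everything else is triangle-inequality bookkeeping and a one-parameter dilation integral.
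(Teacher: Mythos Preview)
Your proposal is correct and follows essentially the same approach as the paper's proof: both use differentiation under the integral sign together with Lemma~\ref{subsolutionlemma} and left-invariance for \eqref{subsolutionH}, and the same triangle-inequality and dilation-change-of-variables arguments for (i)--(iii), arriving at the identical constants $C_1=(\tau-1)^{-4\a}$, $C_3=2^{-4\a}$, and $C_2=\sigma(\tau+1)^{Q-4\a}$ with $\sigma=|B_1|^{-1}\int_{B_1}\rho^{-4\a}$. The only difference is in the final step: the paper argues that both constraints on $\tau$ are satisfied for all sufficiently large $\tau$ (noting $\tau_0>3$), whereas you pin down the explicit choice $\tau=4$ by observing $4\a\ge Q-2\ge 2$ forces $1+2^{1+1/(4\a)}<4$; your claim that $C_3<2C_2$ is ``immediate'' is justified since $\rho\le1$ on $B_1$ gives $\sigma\ge1$, hence $C_2\ge(\tau+1)^{Q-4\a}>1>2^{-4\a}=C_3$.
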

\begin{proof} 
Fix $A(z)\in M_n(\lambda,\Lambda)$ for any $z\in O$. By \eqref{subsolutionHgeneric} and the left-invariance of the vector fields $X_i$, we have
$$\tr \left(A(z) D^2_X \psi_{\a}(\zeta^{-1}\circ z)\right) \geq 0 \qquad  \text{for all }\zeta\in \Hn \text{ and }  z \in O \setminus\{\zeta\}.$$
The desired inequality \eqref{subsolutionH} follows once we recognize that, for $z\notin\overline{E}$, we can differentiate twice under the integral sign.

We proceed to establish the stated bounds for $U_E$; the relations satisfied by the constants $\t, C_1, C_2, C_3$ will become evident by the end of the proof. 
\begin{itemize}
\item[(i)] Let $z \in \partial B_{\t r}(z_0)$ with $\t>1$. Then $d(z,\zeta) \geq (\t - 1)r$ for all $\zeta \in E$ and so, keeping in mind that $\a>0$,
$$U_E(z) \leq ((\t - 1)r)^{-4\a}|E| = C_1 r^{-4\a}|E| \qquad \text{for all } z \in \partial B_{\t r}(z_0).$$
\item[(ii)] Let $z \in B_{\t r}(z_0)$. Then $E \subset B_{(\t + 1) r}(z)$ and so, keeping in mind the group-translation invariance of  Lebesgue measure,
\begin{align*}
U_E(z) & \leq \int_{B_{(\t +1) r}(z)} d(z,\zeta)^{-4\a} \ d\zeta \\
& = ((\t + 1)r)^{Q - 4\a} \int_{B_1} \rho(\zeta)^{-4\a} \ d\zeta \\
& = \sigma (\t + 1)^{Q - 4\a} |B_r(z_0)| r^{-4\a}, \qquad \text{where } \sigma := \frac{1}{|B_1|} \int_{B_1} \rho(\zeta)^{-4\a} \ d \zeta.
\end{align*}
We stress that we have used, in a crucial way, the property $4\a<Q$, for otherwise $\sigma$ would not be finite. Therefore,
$$U_E(z) \leq C_2 r^{-4\a}|B_r(z_0)| \qquad \text{for all } z \in B_{\t r}(z_0).$$
\item[(iii)] Let $z \in B_r(z_0)$. Then $d(z,\zeta) \leq 2r$ for all $\zeta \in E$ and so
$$U_E(z) \geq (2r)^{-4\a}|E| = C_3r^{-4\a}|E| \qquad \text{for all } z \in B_r(z_0).$$
\end{itemize}
Finally, we choose $\t > 1$ to satisfy both 
$$(\t - 1)^{-4\a} \leq 2^{-4\a-1}\quad\mbox{ and }\quad\sigma (\t + 1)^{Q - 4\a} > 2^{-4\alpha-1}.$$ 
Note that it is possible to choose $\tau$ sufficiently large thanks to the fact that $4\a<Q$ (i.e. there exists $\t_0>3$ so that any $\t\geq \t_0$ is a feasible choice). This guarantees both $C_1=(\t - 1)^{-4\a} \leq 2^{-4\a-1}= \frac{C_3}{2}$ and $C_2=\sigma (\t + 1)^{Q - 4\a}>2^{-4\a-1}= \frac{C_3}{2}$ as desired. Observe that these constants depend only on $Q$ and $\frac{\Lambda}{\lambda}$.
\end{proof}

We are now ready to prove a Landis-type growth lemma for subsolutions of $\L_A u = f$ under the assumption \eqref{CordesLandis}. From here onward, the constant $\t\geq 4$ will denote the one determined in Lemma \ref{subsolutionUbounded}. We also employ the convention $f^+=\max\{f,0\}$ and $f^-=\max\{-f,0\}$, so that $f^+,f^-\geq0$ and $f=f^+-f^-$.

\begin{Theorem}[Growth Lemma]\label{growthlemmalandisH} Suppose $B_{\t r}(z_0) \Subset O$ and $\L_A$ satisfies \eqref{CordesLandis} in $O$. Let $D \subset B_{\t r}(z_0)$ be such that $D \cap B_{r}(z_0) \neq \emptyset$. Suppose $u \in C^2(D) \cap C(\overline{D})$ is non-negative in $D$, vanishes on $\partial D \cap B_{\t r}(z_0)$ and satisfies $\L_A u \geq f$ in $D$ for some $f \in L^{\infty}(D)$. Then there exists a structural constant $\eta \in (0,1)$ such that
$$\sup_{D \cap B_r(z_0)} u\leq \left(1 - \eta \frac{|B_r(z_0) \setminus D|}{|B_r(z_0)|} \right) \sup_D u +
\frac{1}{4n\lambda} ||f^-||_{L^{\infty}(D)} (\tau r)^2.$$
\end{Theorem}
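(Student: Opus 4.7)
I would use comparison with a barrier built from the function $U_E$ for $E := B_r(z_0)\setminus D$. Denote $M := \sup_D u$ and $F := \|f^-\|_{L^\infty(D)}$, and set
$$v(z) := A\bigl(C_2 r^{-4\a}|B_r(z_0)| - U_E(z)\bigr) + \frac{F}{4n\lambda}\bigl((\t r)^2 - |x-x_0|^2\bigr),\qquad A := \frac{M\, r^{4\a}}{C_2|B_r(z_0)| - C_1|E|}.$$
The denominator of $A$ is positive since $C_1 < C_2$ (from Lemma~\ref{subsolutionUbounded}) and $|E|\leq |B_r(z_0)|$. The first summand encodes the Landis mechanism: it is normalized to be large (controlling $u$) near $\partial B_{\t r}(z_0)$ and small on $B_r(z_0)\cap D$. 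The second summand is the standard quadratic correction that absorbs $f$.

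\textbf{Differential inequalities and boundary matching.} Next I would verify $\L_A v \leq f$ in $D$ and $v\geq u$ on $\partial D$. Lemma~\ref{subsolutionUbounded} gives $\L_A U_E\geq 0$ on $O\setminus \overline{E}$, and since $D$ is open and disjoint from $E$, this holds on $D$. A direct calculation from $X_i(x_k - (x_0)_k) = \delta_{ik}$ yields $X_{ij}(|x-x_0|^2) = 2\delta_{ij}$, so $\L_A|x-x_0|^2 = 2\tr A(z)\geq 4n\lambda$. Combining, $\L_A v\leq -F\leq f$ in $D$. For the boundary matching: on $\partial D\cap B_{\t r}(z_0)$ where $u=0$, the quadratic piece is non-negative (since $|x-x_0|\leq \t r$) and, using bound (ii) of Lemma~\ref{subsolutionUbounded}, the first piece is $\geq A\bigl(C_2 r^{-4\a}|B_r(z_0)| - C_2 r^{-4\a}|B_r(z_0)|\bigr) = 0$; on $\partial D\cap \partial B_{\t r}(z_0)$ where $u\leq M$, bound (i) gives $v\geq A\,r^{-4\a}\bigl(C_2|B_r(z_0)| - C_1|E|\bigr) = M$, exactly matched. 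The weak comparison principle then yields $u\leq v$ in $\overline{D}$.

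\textbf{Extraction of the decay factor and the main obstacle.} On $D\cap B_r(z_0)$, bound (iii) of Lemma~\ref{subsolutionUbounded} yields $U_E(z)\geq C_3 r^{-4\a}|E|$, so after algebraic simplification
$$v(z)\leq M\left(1 - \frac{(C_3 - C_1)|E|}{C_2|B_r(z_0)| - C_1|E|}\right) + \frac{F(\t r)^2}{4n\lambda} \leq M\left(1 - \eta\,\frac{|B_r(z_0)\setminus D|}{|B_r(z_0)|}\right) + \frac{\|f^-\|_{L^\infty(D)}(\t r)^2}{4n\lambda},$$
with $\eta := \min\{(C_3 - C_1)/C_2,\, 1/2\}\in (0,1)$, which gives the theorem. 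The delicate point is that $\eta$ must be strictly positive, which hinges on the strict separation $2C_1\leq C_3$ supplied by Lemma~\ref{subsolutionUbounded}; without this gap the Landis-type decay would collapse. Thus the main obstacle is not the verification of each individual property of $v$, but calibrating the constants $A$ and the offset $C_2 r^{-4\a}|B_r(z_0)|$ so that \emph{both} boundary conditions are matched sharply while still leaving a decay factor linear in $|E|/|B_r(z_0)|$ on $D\cap B_r(z_0)$, which is the whole content of the growth lemma.
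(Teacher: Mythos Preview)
Your proof is correct and follows essentially the same approach as the paper: build an affine function of $U_E$ (with $E=B_r(z_0)\setminus D$) plus the quadratic correction $\frac{F}{4n\lambda}((\tau r)^2-|x-x_0|^2)$, match boundary values via properties (i)--(ii) of Lemma~\ref{subsolutionUbounded}, apply comparison, and extract the decay factor via property (iii). The only cosmetic difference is your normalization $A=\frac{Mr^{4\alpha}}{C_2|B_r(z_0)|-C_1|E|}$ versus the paper's $\frac{Mr^{4\alpha}}{C_2|B_r(z_0)|}$, which leads to your $\eta=\min\{(C_3-C_1)/C_2,1/2\}$ rather than the paper's $\eta=C_3/(2C_2)$; both are structural and lie in $(0,1)$.
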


\begin{proof} Let $C_1, C_2, C_3 > 0$ be the constants from Lemma \ref{subsolutionUbounded}. Let $E:= B_r(z_0) \setminus D$ with $z_0=(x_0,t_0)$ and $r>0$, and let $U_E$ be as in \eqref{definitionofUE}. 

First, consider the function
$$w(z) := \left(\sup_D u \right) \left[1 - \frac{r^{4\a}}{C_2 |B_r(z_0)|} \left(U_E(z) - C_1 r^{-4\a}|E| \right) \right].$$
We know from \eqref{subsolutionH} that 
\[\Lop_A U_E(z) \geq 0 \quad \text{ for all } z \in B_{\t r}(z_0) \setminus \overline{E},\]
and so $\L_A w \leq 0$ on $D$. By properties (i) and (ii) in Lemma \ref{subsolutionUbounded}, we have, respectively, that 
\[w \geq \underset{D}{\sup} \ u \geq u \quad \text{ on } \partial B_{\t r}(z_0) \cap D\]
and $w \geq 0$ on $B_{\t r}(z_0)$. Since $u = 0$ on $\partial D \cap B_{\t r}(z_0)$, we conclude that $w \geq u$ on $\partial D \cap B_{\t r}(z_0)$. Therefore, $w \geq u$ on $\partial D$.

Next, consider the function
$$v(z) := w(z) + \frac{F}{4n\lambda}\left[(\tau r)^2 - |x - x_0|^2 \right],$$
where $F := ||f^-||_{L^{\infty}(D)}$. Note that $v \geq w$ on $B_{\t r}(z_0)$, as $|x - x_0| \leq \t r$ if $z = (x,t) \in B_{\t r}(z_0)$. Since $w \geq u$ on $\partial D$, it follows that $v \geq u$ on $\partial D$. 

We claim $\L_A v \leq \L_A u$ on $D$. Indeed, if $\varphi(z) = \varphi(x) = |x-x_0|^2$, then $D^2_X \varphi(z) = D^2\varphi(x) = 2 \mathbb{I}_{2n}$. It follows that
\begin{equation}\label{4nla}
\L_A \varphi(z) = \tr(A(z) D^2_X \varphi(z)) = \tr(A(z) D^2 \varphi(x)) = 2 \tr(A(z)) \geq 4n \lambda.
\end{equation}
Since $\L_A w \leq 0$ on $D$, we conclude that 
\[\L_A v \leq -F \leq -f^-\leq f \leq \L_A u\quad \text{on } D.\]
where we have used the hypothesis $\L_A u \geq f$ in the final inequality. The comparison principle yields $v \geq u$ on $D$. In particular, $v \geq u$ on $D \cap B_r(z_0)$. 

We now prove an upper bound on $v$ in $D \cap B_r(z_0)$. By Lemma \ref{subsolutionUbounded} property (iii), we have for all $z \in B_r(z_0)$
\begin{align*}
v(z) & \leq \left(\sup_D u \right) \left[1 - \frac{r^{4\a}}{C_2 |B_r(z_0)|} \left(C_3 r^{-4\a}|E| - C_1 r^{-4\a}|E| \right) \right] + \frac{F}{4n\lambda} (\tau r)^2 \\
& = \left(\sup_D u \right) \left[1 - \frac{|E|}{C_2 |B_r(z_0)|} \left(C_3 - C_1 \right) \right] + \frac{F}{4n\lambda} (\tau r)^2\\
& \leq \left(\sup_D u \right) \left[1 - \frac{|E|}{|B_r(z_0)|} \left(\frac{C_3}{2C_2} \right) \right] + \frac{F}{4n\lambda} (\tau r)^2.
\end{align*}
Note that, in the final inequality, we have used $C_1 \leq \frac{1}{2}C_3$. Setting 
$$\eta := \dfrac{C_3}{2C_2} \in (0,1)$$ 
and using the previously established fact $u \leq v$ on $D \cap B_r(z_0)$, we conclude
$$
\sup_{D \cap B_r(z_0)} u \leq \left(1 - \eta \frac{|E|}{|B_r(z_0)|} \right) \sup_D u + \frac{F}{4n\lambda} (\tau r)^2.
$$
This is the desired inequality once we substitute the definitions of $E$ and $F$.
\end{proof}

\begin{Remark}
The arguments presented thus far can be carried out for coefficient matrices $A(z)$ satisfying the slightly less restrictive condition
$$
\sup_{z\in O}\left\{\frac{{\rm{tr}}(A(z)) + 4 \max\limits_{|v|=1}\left\langle A(z)v,v \right\rangle }{\min\limits_{|v|=1}\left\langle A(z)v,v \right\rangle}\right\}<Q+4
$$
(cf. \cite[condition (1.2)]{T2014}). This is similar to (and inspired by) the condition that appears in Landis' work \cite{LandisBook, LandisDokl}.
\end{Remark}

%%%%%%%%%%%%%%%%%%%%%%%%%%%%%%%%%%
%%%%%%%%%%%%%%%%%%%%%%%%%%%%%%%%%%

\subsection{Applications to interior regularity}

We make a quick digression to illustrate some applications of Theorem \ref{growthlemmalandisH} to interior regularity. Specifically, we will prove interior H\"older estimates for solutions of $\L_A u =f$ and a scale-invariant inhomogeneous Harnack inequality for non-negative solutions, assuming throughout that $\L_A$ satisfies \eqref{CordesLandis} and $f \in L^{\infty}$. The ideas involved in the proofs of these interior estimates are well known to experts; we only present them here for the convenience of the reader.

Let us state the definitions of $d$-H\"older continuity and, for future reference, $d$-Lipschitz continuity.

\begin{Definition}
A function $u$ defined on a set $\O \subset \H^n$ is said to be locally $d$-H\"older continuous of order $\beta \in (0,1)$ at $z_0 \in \overline{\O}$ if there exist constants $C, r_0 >0$ such that
$$|u(z) - u(z_0)| \leq C \ d(z, z_0)^{\beta} \qquad \text{ for all } \ z \in B_{r_0}(z_0) \cap \overline{\O}.$$
If we can let $\beta = 1$ above, then we say $u$ is locally $d$-Lipschitz continuous.
\end{Definition}

\begin{Remark}
It follows from the definition of the metric $d$ in \eqref{defmetric} that $d$-H\"older continuity implies H\"older continuity with the respect to the the standard metric in $\R^{2n+1}$, possibly with a different constant $C' > 0$ and exponent $\beta' \in (0,1)$ (cf. \cite[Proposition 5.1.6]{BLU2007}). Note, however, that $d$-Lipschitz continuity does not necessarily imply Lipschitz continuity with the respect to the standard metric in $\R^{2n+1}$.
\end{Remark}

We begin with the proof of local H\"older continuity of solutions to $\L_A u = f$, Corollary \ref{oscillationdecay}, which is a direct consequence of Theorem \ref{growthlemmalandisH}. To the best of our knowledge, the only results comparable to Corollary \ref{oscillationdecay} are the ones in \cite{GT2011, T2014}. Both these results assume a restriction on the ellipticity ratio, but they are stated for solutions of the homogeneous equation $\L_A u = 0$, and are obtained as a consequence of scale-invariant Harnack inequalities. We also refer the reader to our previous work with Guti\'errez \cite{AGT2017}, where we substituted the restriction on the ellipticity ratio with a control on the modulus of continuity of the matrix coefficients. 

In what follows we use the notation
$$
\underset{D}{\osc} \ u := \sup_{D} u - \inf_{D} u.
$$

\begin{Corollary}\label{oscillationdecay} Suppose $B_{\t r}(z_0) \Subset O$ and $\L_A$ satisfies \eqref{CordesLandis} in $O$. Suppose $u \in C^2(B_{\t r}(z_0)) \cap C(\overline{B_{\t r}(z_0)})$ solves $\L_A u = f$ in $B_{\t r}(z_0)$ for some $f\in L^{\infty}(B_{\t r}(z_0))$. Then there exists a structural constant $\mu \in (0, 1)$ such that
$$\underset{B_r(z_0)}{\osc} \ u \leq \mu \underset{B_{\t r}(z_0)}{\osc} \ u + \frac{1}{3n\lambda}  ||f||_{L^{\infty}(B_{\t r}(z_0))} (\tau r)^2.$$
Consequently, $u$ is locally $d$-H\"older continuous of any order $\beta<\min\left\{\frac{\log(\mu^{-1})}{\log(\tau)},1\right\}$, and for some constant $C$ depending on $\beta$ and on the $L^\infty$-norms of $f$ and $u$ in $B_{\tau r}(z_0)$. 
\end{Corollary}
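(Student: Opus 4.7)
The plan is to deduce the oscillation estimate directly from Theorem \ref{growthlemmalandisH} by the classical level-set trick, and then iterate it on a geometric sequence of balls to obtain the Hölder estimate.

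For the oscillation decay, set $M := \sup_{B_{\tau r}(z_0)} u$, $m := \inf_{B_{\tau r}(z_0)} u$, and $\ell := (M+m)/2$. At least one of the sets $\{u \geq \ell\} \cap B_r(z_0)$ and $\{u \leq \ell\} \cap B_r(z_0)$ has measure at least $\tfrac{1}{2}|B_r(z_0)|$; by the symmetry $u \leftrightarrow -u$ (which only flips the sign of $f$ and thus preserves the $\|f\|_{L^\infty}$ bound), assume the latter. Apply the growth lemma to $w := u - \ell$ on $D := \{u > \ell\} \cap B_{\tau r}(z_0)$. Then $w \geq 0$ in $D$, $w = 0$ on $\partial D \cap B_{\tau r}(z_0)$ since this portion of the boundary is contained in the level set $\{u = \ell\}$, and $\L_A w = f$ in $D$. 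The measure condition gives $|B_r(z_0) \setminus D| \geq \tfrac{1}{2}|B_r(z_0)|$, and clearly $\sup_D w \leq (M-m)/2$. Theorem \ref{growthlemmalandisH} yields
\[
\sup_{B_r(z_0) \cap D} w \leq \Bigl(1 - \tfrac{\eta}{2}\Bigr) \tfrac{M-m}{2} + \tfrac{(\tau r)^2}{4n\lambda} \|f\|_{L^\infty(B_{\tau r}(z_0))}.
\]
Since $u \leq \ell$ on $B_r(z_0) \setminus D$, this is actually a bound on $\sup_{B_r(z_0)} u - \ell$; adding $\ell - m = (M-m)/2$ produces the stated oscillation inequality with $\mu := 1 - \eta/4 \in (0,1)$ (the constant $\tfrac{1}{3n\lambda}$ is a convenient relaxation of $\tfrac{1}{4n\lambda}$). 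The degenerate case $D \cap B_r(z_0) = \emptyset$ is handled directly: then $u \leq \ell$ on $B_r(z_0)$, so $\osc_{B_r(z_0)} u \leq (M-m)/2$.

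For the Hölder continuity, fix $R > 0$ with $B_{\tau R}(z_0) \Subset O$, set $s_k := \tau^{-k} R$, and let $\omega(s) := \osc_{B_s(z_0)} u$. The oscillation inequality reads $\omega(s_{k+1}) \leq \mu \, \omega(s_k) + K s_k^2$ with $K := \tfrac{\tau^2}{3n\lambda}\|f\|_{L^\infty(B_{\tau R}(z_0))}$. Iterating and summing the resulting geometric series gives
\[
\omega(s_k) \leq \mu^k \omega(s_0) + K \sum_{j=0}^{k-1} \mu^{k-1-j} s_j^2 \leq C\bigl(s_k/R\bigr)^{\min\{\log(\mu^{-1})/\log\tau,\,2\}},
\]
with $C$ depending on $\|u\|_{L^\infty(B_{\tau R}(z_0))}$ and $\|f\|_{L^\infty(B_{\tau R}(z_0))}$. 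Monotonicity of $\omega$ fills the dyadic gaps, upgrading this to a pointwise bound $\osc_{B_s(z_0)} u \leq C' s^\beta$ for any $\beta$ strictly below $\min\{\log(\mu^{-1})/\log\tau,\,2\}$; the restriction $\beta < 1$ in the statement comes from the convention that $d$-Hölder exponents are taken in $(0,1)$.

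The only genuine obstacle is the setup of the first step: Theorem \ref{growthlemmalandisH} requires the function to vanish on $\partial D \cap B_{\tau r}(z_0)$, so one cannot directly apply it to $u - m$ on the full ball $B_{\tau r}(z_0)$. Restricting to the superlevel (or sublevel) set of the midpoint $\ell$ is precisely what creates a portion of the boundary on which $w$ vanishes, and the majority-measure dichotomy guarantees that the complementary set, where the barrier function is most effective, has controlled measure inside $B_r(z_0)$. Once this is in place, the rest is geometric bookkeeping.
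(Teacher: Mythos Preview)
Your proof is correct and follows essentially the same approach as the paper: apply the growth lemma to a shifted function on a superlevel set, use the majority-measure dichotomy to guarantee the density bound, and then iterate. The only cosmetic difference is that you center at the midpoint $\ell$ of $\sup/\inf$ over the \emph{large} ball $B_{\tau r}(z_0)$ and obtain $\mu = 1 - \eta/4$, whereas the paper centers at the midpoint over the \emph{small} ball $B_r(z_0)$ and arrives (after a short rearrangement) at the slightly sharper $\mu = \dfrac{1-\eta/2}{1-\eta/4}$; both are structural constants in $(0,1)$, so the statement of the corollary is unaffected.
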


\begin{proof}
This is a standard argument; we provide some details for the reader's convenience.

Consider the function
$$v(z) := u(z) - \frac{1}{2}\left(\sup_{B_r(z_0)} u + \inf_{B_r(z_0)} u \right), \qquad z \in B_{\t r}(z_0).$$
Let $D^+ = \left\{v > 0 \right\} \cap B_{\t r}(z_0)$ and $D^- = \left\{v < 0 \right\} \cap B_{\t r}(z_0)$. Since $|B_r(z_0) \setminus D^+|+|B_r(z_0) \setminus D^-|\geq |B_r(z_0)|$, one of the two inequalities $|B_r(z_0) \setminus D^+| \geq \frac{1}{2}|B_r(z_0)|$ and $|B_r(z_0) \setminus D^-| \geq \frac{1}{2}|B_r(z_0)|$ must hold. With no loss of generality, we may assume that $|B_r(z_0) \setminus D^+| \geq \frac{1}{2}|B_r(z_0)|$; otherwise, replace $v$ in the following argument with $-v$. 

Since $\L_A v = f$, we can use Theorem \ref{growthlemmalandisH} to obtain
\begin{align*}
\sup_{B_r(z_0)} v= \sup_{D^+ \cap B_r(z_0)} v &\leq \left(1-\frac{\eta}{2}\right)\sup_{D^+} v + \frac{1}{4n\lambda} ||f^-||_{L^{\infty}(B_{\t r}(z_0))} (\tau r)^2 \\
&\leq \left(1-\frac{\eta}{2}\right)\sup_{B_{\t r}(z_0)} v + \frac{1}{4n\lambda} ||f||_{L^{\infty}(B_{\t r}(z_0))} (\tau r)^2
\end{align*}

In the previous estimate, we have also assumed that $D^+\cap B_r(z_0)\neq\emptyset$, for otherwise $v\leq 0$ in $B_r(z_0)$ and the estimate holds trivially. Since, by definition, we have
$$
\sup_{B_r(z_0)} v  = \frac{1}{2} \underset{B_r(z_0)}{\osc} \ u\quad\mbox{ and }\quad
\sup\limits_{B_{\t r}(z_0)} v\leq \underset{B_{\t r}(z_0)}{\osc} \ u - \frac{1}{2} \underset{B_r(z_0)}{\osc} \ u,
$$
we deduce
$$
\left(1-\frac{\eta}{4}\right) \underset{B_r(z_0)}{\osc} \ u \leq  \left(1-\frac{\eta}{2}\right) 
\underset{B_{\t r}(z_0)}{\osc} \ u  + \frac{1}{4n\lambda} ||f||_{L^{\infty}(B_{\t r}(z_0))} (\tau r)^2.
$$
This implies the oscillation decay in the statement with the choice
$$\mu := \frac{1 - \frac{\eta}{2}}{1 - \frac{\eta}{4}} \in (0,1).$$ 
The local $d$-H\"older continuity of $u$ now follows by applying, for instance, \cite[Lemma 8.23]{GTBook}. \end{proof}

Our next application of Theorem \ref{growthlemmalandisH} is to the proof of an inhomogeneous Harnack inequality for non-negative solutions of $\L_A u = f$ with $f \in L^{\infty}$. This result will be used later in Section \ref{subsec-linear-growth} when we study higher regularity of solutions in a characteristic half-space.

While it is possible to prove the Harnack inequality using the growth lemma directly \cite{LandisBook}, we elect to use the axiomatic approach developed in \cite{DiFGutLan, Safometric, GM2018} for brevity. Specifically, we will apply Theorems 2.7 and 2.8 from \cite{GM2018} and so the following proof will entail verifying that we can invoke these results. We also note that a Harnack inequality for the homogeneous equation $\L_A u = 0$, such as the ones proved in \cite{GT2011, T2014, AGT2017} implies an inhomogeneous Harnack inequality when the right-hand-side $f$ belongs to $L^{\infty}$; see, for instance, \cite[proof of Theorem 5.5]{Gutierrez-Lanconelli-2003}. Such an argument relies on the linearity of the operator $\L_A$, whereas the axiomatic approach can potentially be applied to nonlinear problems as well.

\begin{Theorem}[Inhomogeneous Harnack Inequality]\label{thm:Harnack}
Suppose $\L_A$ satisfies \eqref{CordesLandis} in $O$. There exist structural constants $C_H, K_H > 1$ such that if $B_{K_H R}(z)\Subset O$ and $u \in C^2(B_{K_H R}(z)) \cap C(\overline{B_{K_H R}(z)})$ is a non-negative solution of  $\Lop_A u = f$ in $B_{K_H R}(z)$ with $f \in L^{\infty}(B_{K_H R}(z))$, then
\begin{equation}\label{HarnacK}
\sup_{B_R(z)} u \leq C_H\left(\inf_{B_R(z)} u + R^2 ||f||_{L^{\infty}(B_{K_H R}(z))} \right).
\end{equation}
\end{Theorem}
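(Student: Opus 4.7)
The plan is to verify the hypotheses of the axiomatic framework developed in \cite{DiFGutLan, GM2018}, which deduces a scale-invariant inhomogeneous Harnack inequality in a metric measure space from (i) a doubling property and (ii) an analytic \emph{critical density} property for non-negative supersolutions. The Heisenberg group $(\Hn, d, dz)$ is $Q$-Ahlfors regular since $|B_r(z)|=|B_1|r^Q$, so the geometric prerequisites of \cite[Theorems 2.7 and 2.8]{GM2018} are automatically met. The whole argument thus reduces to extracting the analytic axioms from the growth lemma, Theorem \ref{growthlemmalandisH}.

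The central step is the following critical density statement: there exist structural constants $\nu\in(0,1)$, $M>1$, and $c_0>0$ such that whenever $u\geq 0$ satisfies $\L_A u\leq f$ in $B_{MR}(z_0)$ with $|\{u\geq 1\}\cap B_R(z_0)|\geq \nu\,|B_R(z_0)|$, one has
$$\inf_{B_R(z_0)} u \;\geq\; c_0 - C R^2 \|f^+\|_{L^\infty(B_{MR}(z_0))}.$$
I would prove this by applying Theorem \ref{growthlemmalandisH} to the truncation $v:=(1-u)^+$, which is a non-negative subsolution of $\L_A v\geq -f^+$ on the open set $D:=\{u<1\}\cap B_{\t R}(z_0)$ and vanishes on $\partial D\cap B_{\t R}(z_0)$ by continuity of $u$. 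Choosing $M=\t$, the measure hypothesis becomes $|B_R(z_0)\setminus D|\geq \nu|B_R(z_0)|$; since $\sup_D v\leq 1$, the growth lemma yields $\sup_{B_R(z_0)} v\leq 1-\eta\nu+ C\|f^+\|_{L^\infty}R^2$, which rearranges to $u\geq \eta\nu - CR^2\|f^+\|_{L^\infty}$ on $B_R(z_0)$, i.e., critical density with $c_0=\eta\nu$.

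Feeding this critical density into \cite[Theorem 2.7]{GM2018} produces a weak $L^p$-to-$L^\infty$ Harnack for non-negative supersolutions. Combining this with a local boundedness estimate for non-negative subsolutions, which is itself extracted from Theorem \ref{growthlemmalandisH} by a Krylov--Safonov \emph{measure-to-point} iteration as packaged in \cite[Theorem 2.8]{GM2018}, one obtains \eqref{HarnacK} with $K_H$ determined by the dilation factor of the chain of concentric balls underlying the Calder\'on--Zygmund covering.

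The only real obstacle is bookkeeping the scaling: the additive inhomogeneity $R^2\|f\|_\infty$ in \eqref{HarnacK} must match what the abstract machinery delivers. Because Theorem \ref{growthlemmalandisH} carries the sharp $(\t r)^2\|f^-\|_\infty$ error and the metric $d$ is $1$-homogeneous under the group dilations $\delta_r$, the rescaling $u\mapsto u\circ\delta_r$ intertwines cleanly with $\L_A$, so the scaling is compatible with the axiomatic framework and no essential difficulty arises beyond a careful tracking of the structural constants $\t,\eta,\nu,\ldots$ through the covering argument.
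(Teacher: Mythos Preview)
Your approach is essentially the paper's: derive an $\epsilon$-critical density estimate from Theorem \ref{growthlemmalandisH} by applying it to $v=1-u$ on $\{u<1\}\cap B_{\t r}(z_0)$, then invoke the axiomatic machinery of \cite{DiFGutLan,GM2018}. The only substantive item you gloss over is that \cite[Theorems 2.7 and 2.8]{GM2018} require, in addition to critical density, the \emph{double ball property} as a separate input; the paper checks this explicitly by applying the critical density estimate at the particular scale $\epsilon=2^{-Q}$ (if $\inf_{B_{r/2}(z_0)}u\geq 1$ then $|\{u\geq 1\}\cap B_r(z_0)|\geq 2^{-Q}|B_r(z_0)|$, forcing $\inf_{B_r(z_0)}u\geq \eta/2^{Q+1}$). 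This is a two-line deduction from what you already have, so the gap is cosmetic rather than mathematical, but you should state it since the cited theorems list it as a hypothesis. Also note that the paper phrases critical density with a smallness threshold on $r^2\|f^+\|_{L^\infty}$ rather than an additive error; the two formulations are interchangeable for the purpose of feeding into \cite{GM2018}.
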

\begin{proof} Consider any open set $\Omega$ with closure contained in $O$. We first show how the growth lemma, Theorem \ref{growthlemmalandisH}, implies the following $\epsilon$-critical density property for any fixed $\epsilon \in \left(0, 1 \right)$: for every $u \in C^2(B_{\t r}(z_0)) \cap C(\overline{B_{\t r}(z_0)})$ with $B_{\t r}(z_0)\subset \Omega$ we have
\begin{equation}\label{epscritical}
\begin{cases}
u\geq 0 \quad\quad\,\,\mbox{ in }B_{\t r}(z_0)\\
\L_A u \leq f  \quad\mbox{ in }B_{\t r}(z_0)\\
|\{u \geq 1 \} \cap B_{ r}(z_0)| \geq \epsilon |B_{ r}(z_0)|\\
r^2||f^+||_{L^{\infty}(B_{\t r}(z_0))} \leq \frac{2n\lambda}{\tau^2} \eta \epsilon
\end{cases}
\quad
\Longrightarrow
\quad
\inf_{B_r(z_0)} u \geq \frac{ \eta \epsilon}{2}.
\end{equation}
Consider any $u$ as in the left-hand side of \eqref{epscritical}, and let $v := 1 - u$ and $D := \left\{v > 0 \right\} \cap B_{\t r}(z_0)$. If $D\cap B_{ r}(z_0)=\emptyset$, then $u\geq 1$ in $B_{ r}(z_0)$ which trivially implies $\inf_{B_r(z_0)} u\geq 1$. We can then assume $D\cap B_{ r}(z_0)\neq \emptyset$. We notice that we have $\L_A v \geq -f$ and $v \leq 1$ on $B_{\t r}(z_0)$, and $v$ vanishes on $\partial D\cap B_{\t r}(z_0)$. Moreover,
$$
\frac{|B_r(z_0) \setminus D|}{|B_r(z_0)|}=\frac{|\left\{v \leq 0 \right\} \cap B_{r}(z_0)|}{|B_r(z_0)|}
 = \frac{|\{u \geq 1 \} \cap B_{r}(z_0)|}{|B_r(z_0)|} \geq \epsilon.
$$
Applying Theorem \ref{growthlemmalandisH} to $v$ in $B_{\t r}(z_0)$, we thus obtain
\begin{align*}
\left(1 - \inf_{B_r(z_0)} u \right)&= \sup_{B_r(z_0)} v = \sup_{D \cap B_r(z_0)} v\\
&\leq \left(1 - \eta \frac{|B_r(z_0) \setminus D|}{|B_r(z_0)|} \right) \sup_D v +
\frac{1}{4n\lambda} ||(-f)^-||_{L^{\infty}(D)} (\tau r)^2 \\
&\leq \left(1 - \eta \epsilon \right) \sup_D v +
\frac{1}{4n\lambda} ||f^+||_{L^{\infty}(D)} (\tau r)^2 \\
& \leq \left(1 - \eta \epsilon \right) +
\frac{1}{4n\lambda} ||f^+||_{L^{\infty}(D)} (\tau r)^2\\
& \leq 1 - \eta \epsilon  +
\frac{\tau^2}{4n\lambda} \frac{2n\lambda}{\tau^2} \eta \epsilon= 1 - \eta \epsilon + \frac{1}{2} \eta \epsilon=1-\frac{1}{2} \eta \epsilon.
\end{align*}
Rearranging terms, we obtain 
$$\inf_{B_r(z_0)} u  \geq \frac{ \eta \epsilon}{2},$$
which finishes the proof of \eqref{epscritical}.

Since \eqref{epscritical} holds true for any $\epsilon\in (0,1)$ we can also verify the following double ball property: for every $u \in C^2(B_{\t r}(z_0)) \cap C(\overline{B_{\t r}(z_0)})$ with $B_{\t r}(z_0)\subset \Omega$ we have
\begin{equation}\label{doubleball}
\begin{cases}
u\geq 0 \quad\quad\,\,\mbox{ in }B_{\t r}(z_0)\\
\L_A u \leq f  \quad\mbox{ in }B_{\t r}(z_0)\\
\inf_{B_\frac{r}{2}(z_0)} u \geq 1\\
r^2||f^+||_{L^{\infty}(B_{\t r}(z_0))} \leq \frac{2n\lambda}{2^{Q}\tau^2} \eta 
\end{cases}
\quad
\Longrightarrow
\quad
\inf_{B_r(z_0)} u \geq \frac{ \eta }{2^{Q+1}}.
\end{equation}
As a matter of fact, if we consider any $u$ as in the left-hand side of \eqref{doubleball} and we assume by contradiction that $\inf_{B_r(z_0)} u < \frac{ \eta }{2^{Q+1}}$, then we can apply \eqref{epscritical} with $\epsilon=2^{-Q}$ which yields $|\{u \geq 1 \} \cap B_{ r}(z_0)| < 2^{-Q} |B_{ r}(z_0)|$. Noticing that $\{u \geq 1 \}$ contains $B_\frac{r}{2}(z_0)$, we have already obtained the following contradiction
$$
\frac{1}{2^Q}=\frac{|B_\frac{r}{2}(z_0)|}{|B_{r}(z_0)|}\leq \frac{|\{u \geq 1 \} \cap B_{ r}(z_0)|}{|B_{ r}(z_0)|}<\frac{1}{2^{Q}}
$$
which proves \eqref{doubleball}.

Combining \eqref{epscritical} 
and \eqref{doubleball} with the results in \cite{DiFGutLan, GM2018} we can now deduce the desired Harnack inequality. More precisely, for $f\in L^\infty(\Omega)$ we define $S_{\Omega}(B_r(z_0), f) := r^2 ||f||_{L^{\infty}(\Omega)}$ and we let $\mathbb{K}_{\Omega,f}$ to be the set of non-negative $C^2$-functions $u$ defined in (a domain containing a) subset of $\Omega$ and satisfying there $\L_A u = f$. From \eqref{epscritical}, \eqref{doubleball}, and \cite[Theorems 2.7 and 2.8]{GM2018}, we infer the existence of constants $C_H, K_H>1$ (depending only on the constants $\tau, \eta, Q, \lambda$ that appear in \eqref{epscritical}-\eqref{doubleball}) such that, for any ball $B_{K_H R}(z)\subseteq \Omega$ and any non-negative $C^2$-solution $u$ of $\L_A u = f$ in $B_{K_H R}(z)$, we have
$$\sup_{B_R(z)} u \leq C_H\left(\inf_{B_R(z)} u + R^2 ||f||_{L^{\infty}(\Omega)} \right).$$
Setting $\Omega=B_{K_H R}(z)$, we have thus established \eqref{HarnacK}.
\end{proof}

%%%%%%%%%%%%%%%%%%%%%%%%%%%%%%%%%%
%%%%%%%%%%%%%%%%%%%%%%%%%%%%%%%%%%

\section{Boundary H\"older Regularity}\label{sec:Holder}

We now embark on the task of establishing boundary regularity results for solutions of $\Lop_A u = f$ near a portion of the boundary where $u$ vanishes. This will follow from boundary versions of the growth lemma, Theorem \ref{growthlemmalandisH}, and oscillation decay, Corollary \ref{oscillationdecay}. Such estimates will naturally depend on the boundary geometry of the domain $\Omega$ where the equation is satisfied. We will see that, under suitable regularity assumptions on $\partial\Omega$, we can directly apply Theorem \ref{growthlemmalandisH} if we assume the Cordes-Landis condition \eqref{CordesLandis}. On the other hand, we will also show that, under stronger regularity hypotheses on $\partial\Omega$, we can dispense with the condition \eqref{CordesLandis} and prove oscillation decay close to the boundary points where $u$ vanishes. This is noteworthy, as there are no interior regularity results available in the literature in this regime.  

We begin by precisely stating the necessary regularity hypotheses on $\partial\Omega$. These are analogues of the well known ``exterior-cone'' condition in Euclidean space, which appears frequently in the literature concerning boundary H\"older regularity for uniformly elliptic equations \cite{Miller67, %Micheal77, 
Michael81, ChoSafonov}.

\begin{Definition}
We say that $\O \subset \H^n$ satisfies the \emph{positive exterior density condition} at $z_0\in\partial\O$ if there exist $\theta_0 \in (0,1]$ and $r _0>0$ such that
$$
|B_r(z_0) \setminus \O|  \geq \theta_0 |B_r(z_0)|\quad\mbox{ for all }0<r\leq r_0.
$$
We say that $\O$ satisfies the \emph{uniform positive exterior density condition} if $\O$ satisfies the positive exterior density condition at every boundary point $z_0\in\partial\O$ for $\theta_0, r_0$ that can be chosen uniformly with respect to $z_0\in\partial\O$.
\end{Definition}

\begin{Definition}\label{contball}
We say that $\O \subset \H^n$ satisfies the \emph{exterior ball containment condition} at $z_0\in\partial\O$ if there exist $\theta\in (0,1]$ and $r_0>0$ such that
\begin{equation}\label{pointneeded}
\mbox{ for all } 0<r\leq r_0, \,\,\mbox{ there exists }z_r\in \Hn\mbox\, \mbox{ with }\, B_{\theta r}(z_r)\subseteq B_r(z_0) \setminus \O.
\end{equation}
We say that $\O$ satisfies the \emph{uniform exterior ball containment condition} if $\O$ satisfies the exterior ball containment condition at every boundary point $z_0\in\partial\O$ for $\theta_0, r_0$ that can be chosen uniformly with respect to $z_0\in\partial\O$.
\end{Definition}
Using the fact that $|B_{\theta r}(z_r)|=\theta^Q |B_r(z_0)|$, it is clear that the exterior ball containment condition implies the positive exterior density condition with $\theta_0=\theta^Q$.

There are several cone-type sets whose boundaries satisfy the regularity properties defined above; see, for instance, the general construction described in \cite[Theorem 6.5]{LU2010}. Since we are working in a homogeneous Lie group, it is possible to use the dilation $\delta_r$ to explicitly define cone-like sets with vertex at a chosen point. We outline this construction below and, consequently, establish the exterior ball containment condition for these sets.

\begin{Example}\label{exa:exterior-cone}
We say a non-empty open set $\mathcal{C}_0\subset \Hn$ is \emph{a truncated open cone with vertex at $0$} if
$$
\delta_r(z)\in \mathcal{C}_0 \text{ for all } z\in
\mathcal{C}_0 \text{ and } r\in (0,1).$$
We say $\mathcal{C}\subset \Hn$ is a \emph{truncated open cone with vertex at $z_0$} if $\mathcal{C}=z_0\circ \mathcal{C}_0$ for some truncated open cone $\mathcal{C}_0$ with vertex at $0$.

Suppose there exists a truncated open cone $\mathcal{C}$ with vertex at $z_0 \in \partial \O$ such that $\mathcal{C}\subset \Hn\setminus \O$. Then $\mathcal{C}_0=z_0^{-1}\circ \mathcal{C}$ is a truncated open cone with vertex at $0$. Since $\mathcal{C}_0$ is open, we can find $\bar{z}\in \mathcal{C}_0$ and $\bar{\theta}>0$ such that $B_{\bar{\theta}}(\bar{z})\subset \mathcal{C}_0$. Using the definition of $\mathcal{C}_0$ and the degree one $\delta_r$-homogeneity of the metric $d$, we have 
\[B_{\bar{\theta} r}(\delta_r(\bar{z}))=\delta_r\left( B_{\bar{\theta}}(\bar{z}) \right)\subset \mathcal{C}_0 \quad \text{for all } 0<r\leq 1.\] 
On the other hand, since $d(\delta_{r}(\bar{z}),0)=r\rho(\bar{z})$, the triangle inequality implies \[B_{\bar{\theta}r}(\delta_r(\bar{z}))\subset \mathcal{C}_0\cap B_{r(\bar{\theta}+\rho(\bar{z}))}(0) \quad \text{for all } 0<r\leq 1.
\]
If we make the replacement $r(\bar{\theta}+\rho(\bar{z}))\mapsto r$, we find that
\[
B_{\theta r}\left(\delta_{\frac{r}{\bar{\theta}+\rho(\bar{z})}}(\bar{z})\right)\subset \mathcal{C}_0\cap B_{r}(0)\quad\mbox{ for any }0<r\leq \bar{\theta}+\rho(\bar{z}), \quad\mbox{with the choice }\theta=\frac{\bar{\theta}}{\bar{\theta}+\rho(\bar{z})}.
\]
If we also fix a positive $r_0<\min\{\bar{\theta}+\rho(\bar{z}), R_0\}$, then for all $0<r\leq r_0$, we have
\[
B_{\theta r}(z_r)\subset\mathcal{C}\cap B_{r}(z_0)\subset B_{r}(z_0)\setminus \O \quad \text{where } z_r:=z_0\circ \delta_{\frac{r}{\bar{\theta}+\rho(\bar{z})}}(\bar{z}).
\]
This shows $\O$ satisfies the exterior ball containment condition at $z_0 \in \partial \O$.
\end{Example}

We are now ready to prove a boundary version of Theorem \ref{growthlemmalandisH} under appropriate hypotheses on $\Lop_A$ and the geometry of the boundary $\partial \O$.  Our approach is inspired by that of Cho and Safonov in the uniformly elliptic case \cite{ChoSafonov}. Note that the hypothesis (H1) below does not impose any upper bound on the subellipticity ratio $\frac{\Lambda}{\lambda}$.

\begin{Theorem}[Boundary Growth Lemma]\label{bogrth}
Let $\L_A$ be such that $A(z)\in M_n(\lambda,\Lambda)$ for any $z\in O$.
Consider an open set $\O \Subset O$ and let $z_0\in\partial\O$. Assume either
\begin{itemize}
\item[(H1)] $\O$ satisfies the exterior ball containment condition at $z_0$, or
\item[(H2)]  $\O$ satisfies the exterior density condition at $z_0$ and $\Lop_A$ satisfies \eqref{CordesLandis}.
\end{itemize}
Let $D$ be an open set such that $D\subseteq \O\cap B_{\t r}(z_0)$ and $D\cap B_r(z_0)\neq\emptyset$ for some $0<r\leq r_0$. Suppose $u \in C^2(D) \cap C(\overline{D})$ is non-negative in $D$, vanishes on $\partial D \cap B_{\t r}(z_0)$, and satisfies $\L_A u \geq f$ in $D$ for some $f \in L^{\infty}(D)$.  Then there exists a constant $\gamma \in (0,1)$ (depending only on structural constants and on $\theta$ or, respectively, on $\theta_0$) such that
\begin{equation}\label{bogrthestimate}
    \sup_{D \cap B_r(z_0)} u \leq \gamma \, \sup_{D} u + \frac{1}{4n\lambda} ||f^-||_{L^{\infty}(D)} (\tau r)^2.
\end{equation}
\end{Theorem}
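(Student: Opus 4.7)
The argument splits along hypotheses (H1) and (H2). Case (H2) is an almost direct corollary of the interior growth lemma, Theorem \ref{growthlemmalandisH}: since $D \subseteq \O$ we have $B_r(z_0) \setminus D \supseteq B_r(z_0) \setminus \O$, and the exterior density condition yields $|B_r(z_0) \setminus D| \geq \theta_0 |B_r(z_0)|$. With \eqref{CordesLandis} in force and $B_{\t r}(z_0) \Subset O$ (ensured for $r_0$ sufficiently small, using $\O \Subset O$), Theorem \ref{growthlemmalandisH} applied to $u$ on $D$ directly yields \eqref{bogrthestimate} with $\gamma := 1 - \eta \theta_0 \in (0,1)$.

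Case (H1) is more delicate because we no longer have \eqref{CordesLandis}, and the integrated barrier $U_E$ of Lemma \ref{subsolutionUbounded}---which crucially relied on $\a < Q/4$ for integrability---is unavailable. The main idea is to construct instead a pointwise barrier centered at the point $z_r$ supplied by \eqref{pointneeded}. The key observation is that Lemma \ref{subsolutionlemma} places \emph{no} upper bound on $\a$: I fix any $\a \geq \frac{1}{4}((Q+1)\Lambda/\lambda - 3)$ and consider $W(z) := d(z,z_r)^{-4\a} = \psi_{\a}(z_r^{-1} \circ z)$. Because $B_{\theta r}(z_r) \subseteq \mathrm{int}(\O^c)$, the singular point $z_r$ lies in the open exterior of $\O$, so $W \in C^\infty(\overline D)$, and by Lemma \ref{subsolutionlemma} combined with the left-invariance of the horizontal fields, $\L_A W \geq 0$ throughout $\overline D$.

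Set $M := \sup_D u$, $F := ||f^-||_{L^\infty(D)}$, and $b := \bigl[(\theta r)^{-4\a} - ((\t-1+\theta)r)^{-4\a}\bigr]^{-1} > 0$, and consider the candidate supersolution
$$v(z) := M b \bigl[(\theta r)^{-4\a} - W(z)\bigr] + \frac{F}{4n\lambda}\bigl[(\t r)^2 - |x-x_0|^2\bigr].$$
The subsolution property of $W$ and the inequality \eqref{4nla} give $\L_A v \leq -F \leq f \leq \L_A u$ in $D$. To apply the comparison principle I must verify $v \geq u$ on $\partial D$, and this rests on three distance estimates: (i) since $B_{\theta r}(z_r) \subseteq \mathrm{int}(\O^c)$, every $z \in \overline D \subseteq \overline \O$ satisfies $d(z,z_r) \geq \theta r$, so $W \leq (\theta r)^{-4\a}$ on $\overline D$ and the first bracket of $v$ is non-negative; (ii) the inclusion $B_{\theta r}(z_r) \subseteq B_r(z_0)$ forces $d(z_r,z_0) \leq (1-\theta) r$, so for $z \in \partial B_{\t r}(z_0)$ the reverse triangle inequality yields $d(z,z_r) \geq (\t-1+\theta) r$, making the first summand of $v$ at least $M$; (iii) for $z \in B_r(z_0)$ the triangle inequality gives $d(z,z_r) \leq (2-\theta) r$. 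Items (i)--(ii), together with the non-negativity of the quadratic term on $B_{\t r}(z_0)$, give $v \geq 0 = u$ on $\partial D \cap B_{\t r}(z_0)$ and $v \geq M \geq u$ on $\partial D \cap \partial B_{\t r}(z_0)$. The comparison principle then yields $u \leq v$ in $D$, and (iii) converts this, on $D \cap B_r(z_0)$, into
$$\sup_{D \cap B_r(z_0)} u \leq \gamma \, M + \frac{F}{4n\lambda}(\t r)^2, \qquad \gamma := \frac{\theta^{-4\a} - (2-\theta)^{-4\a}}{\theta^{-4\a} - (\t - 1 + \theta)^{-4\a}}.$$
The one point requiring care---and the main obstacle to the whole construction---is confirming that $\gamma \in (0,1)$, which reduces to the chain of inequalities $\theta < 2-\theta < \t-1+\theta$; after (harmlessly) replacing $\theta$ with $\min\{\theta,1/2\}$ if necessary, the first inequality is free, while the second becomes $\t > 3-2\theta$ and is comfortably secured by $\t \geq 4$.
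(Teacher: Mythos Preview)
Your handling of (H2) and the overall barrier strategy for (H1) are essentially the paper's. There is, however, a genuine error in step (ii): the implication ``$B_{\theta r}(z_r) \subseteq B_r(z_0) \Rightarrow d(z_r, z_0) \leq (1-\theta)r$'' is \emph{false} for the gauge metric $d$ on $\H^n$. This is Euclidean intuition that does not survive in a non-length metric. For a concrete counterexample take $z_0 = 0$, $r = 1$, $\theta = \tfrac12$, and $z_r = (0, t_r)$ with $t_r = 0.3$; then $d(z_0, z_r) = t_r^{1/2} = \sqrt{0.3} > \tfrac12$, yet a direct check gives $B_{1/2}(z_r) \subset B_1(0)$ (indeed $|\xi|^4 + \tau^2 = |\xi|^4 + (\tau - 0.3)^2 + 0.6\tau - 0.09 < \tfrac{1}{16} + 0.6(0.55) - 0.09 < 1$). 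Since your step (iii) also relies on $d(z_r, z_0) \leq (1-\theta)r$, the upper bound $d(z, z_r) \leq (2-\theta)r$ is equally unjustified, and the crucial comparison $v \geq M$ on $\partial D \cap \partial B_{\tau r}(z_0)$ is not established.

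The repair is immediate and brings you to a minor variant of the paper's constants. From $z_r \in B_{\theta r}(z_r) \subseteq B_r(z_0)$ you do get the weaker but valid bound $d(z_r, z_0) < r$; the reverse triangle inequality then gives $d(z, z_r) > (\tau - 1)r$ on $\partial B_{\tau r}(z_0)$ and $d(z, z_r) < 2r$ on $B_r(z_0)$. Redefining $b$ with $(\tau - 1)r$ in place of $(\tau - 1 + \theta)r$ yields
\[
\gamma = \frac{\theta^{-4\a} - 2^{-4\a}}{\theta^{-4\a} - (\tau - 1)^{-4\a}} \in (0,1),
\]
since $\theta \leq 1 < 2 < \tau - 1$ (recall $\tau \geq 4$). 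The paper sidesteps the whole issue by recentering at $z_r$: it works on the smaller set $D' := D \cap B_{3r}(z_r)$ with the paraboloid centered at $x_r$, so that only the trivial inclusions $B_r(z_0) \subseteq B_{2r}(z_r)$ and $B_{3r}(z_r) \subseteq B_{4r}(z_0) \subseteq B_{\tau r}(z_0)$ (both consequences of $d(z_0, z_r) < r$) are needed.
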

\begin{proof} The proof under the assumption (H2) is a direct application of Theorem  \ref{growthlemmalandisH}. Indeed, since $\O$ satisfies the positive exterior density condition at $z_0$ and $D\subseteq \O$, we have
$$
|B_r(z_0) \setminus D|\geq |B_r(z_0) \setminus \O|\geq \theta_0 |B_r(z_0)|.
$$
Hence, by Theorem  \ref{growthlemmalandisH}, we conclude that
\begin{align*}
\sup_{D \cap B_r(z_0)} u\leq \left(1 - \eta \theta_0 \right)\sup_D u + \frac{1}{4n\lambda}||f^-||_{L^{\infty}(D)} (\tau r)^2,
\end{align*}
which proves the statement with the choice $\gamma:=1 - \eta \theta_0$.

We are thus left with the proof under the assumption {\rm{(H1)}}. In this case, let $\theta$ and $z_r=(x_r,t_r)\in B_r(z_0) \setminus \O$ be as in \eqref{pointneeded}. If we let $D':=D\cap B_{3r}(z_r)$, then since $D\subseteq \O$, we have
\begin{equation}\label{pointneededhere}
 B_{\theta r}(z_r)\subseteq B_r(z_0) \setminus \O\subseteq B_r(z_0) \setminus D \subseteq B_r(z_0) \setminus D'.
\end{equation}
Let $F = ||f^-||_{L^{\infty}(D')}$. Fix $\a\geq \frac{1}{4}\left( (Q+1)\frac{\Lambda}{\lambda} - 3\right)$ and let $\psi_{\a}$ be as in \eqref{defPhiPsi}; we remind the reader that the Cordes-Landis condition \eqref{CordesLandis} is \emph{not} in effect, since we are assuming (H1). Consider the function
$$v(z) := w(z) + \frac{F}{4n\lambda}\left[(3r)^2 - |x - x_r|^2 \right],$$
where
$$
w(z):=\left(\sup_D u \right) \frac{(\theta r)^{-4\a}-\psi_\a(z_r^{-1}\circ z)}{(\theta r)^{-4\a}-(3 r)^{-4\a}}.
$$
We notice that the choice of $\a$, together with the left-invariance of the vector fields $X_i$ and the fact that $z_r\notin D'$, allows us to use \eqref{subsolutionHgeneric} and infer that $\L_A( \psi_\a(z_r^{-1}\circ \cdot))(z)\geq 0$ in $D'$. Since $\a>0$ and $\theta\leq 1$, we have $(\theta r)^{-4\a}-(3 r)^{-4\a} > 0$ and so $\L_A w\leq 0$ in $D'$. Combining this with \eqref{4nla}, we conclude that
\begin{equation}\label{perdopo}
\L_A v \leq -F \leq -f^-\leq  f\leq \L_A u \quad\mbox{ in }D'.
\end{equation}
Let us next compare the functions $v$ and $u$ on $\partial D'$. Since $\overline{D'}\subseteq \overline{B_{3 r}(z_r)}$, we have $|x-x_r|\leq d(z,z_r)\leq 3r$ for all $z\in\partial D'$ which implies $v\geq w$ on $\partial D'$. On the other hand, for any $z\in\partial B_{3 r}(z_r) \cap \overline{D'}$ we have $w(z)=\sup_D u \geq u(z)$. Furthermore, since $B_{\theta r}(z_r)\cap D'=\emptyset$ by \eqref{pointneededhere} and $B_{3 r}(z_r)\subseteq B_{4 r}(z_0)\subseteq B_{\t r}(z_0)$, we also have $w(z)\geq 0=u(z)$ for any $z\in \partial D'\cap B_{3 r}(z_r)$. We have thus shown $v\geq w\geq u$ on $\partial D'$. It follows from \eqref{perdopo} and the comparison principle that $v\geq u$ in $D'$. In particular, since $D'\supseteq D\cap B_{2r}(z_r)\supseteq D\cap B_{r}(z_0)\neq \emptyset$, we deduce
\begin{align*}
\sup_{D \cap B_r(z_0)} u\leq \sup_{D \cap B_r(z_0)} v\leq \sup_{D \cap B_{2r}(z_r)} v &\leq \left(\sup_D u \right) \frac{(\theta r)^{-4\a}-(2 r)^{-4\a}}{(\theta r)^{-4\a}-(3 r)^{-4\a}}+\frac{F}{4n\lambda}(3r)^2\\
&=\left(1-\frac{2^{-4\a}-3^{-4\a}}{\theta^{-4\a}-3^{-4\a}}\right) \sup_D u+\frac{F}{4n\lambda}(3r)^2.
\end{align*}
Recalling that $F\leq ||f^-||_{L^{\infty}(D)}$, this establishes the desired estimate with the choice $\gamma:= 1-\frac{2^{-4\a}-3^{-4\a}}{\theta^{-4\a}-3^{-4\a}}$.  
\end{proof}

\begin{Remark}
We note the fact that, in the case of assumption {\rm{(H1)}}, any $\t\geq 4$ can be used in proof above (i.e. $\t$ is not required to be the constant chosen after Lemma \ref{subsolutionUbounded}). 
\end{Remark}

In the previous theorem, we have some freedom in choosing the domain $D$ close to the boundary point $z_0\in\partial\O$ where we can apply the growth estimate \eqref{bogrthestimate}. By choosing $D=\O\cap B_{\t r}(z_0)$ we immediately obtain oscillation decay and $d$-H\"older continuity at $z_0$ for non-negative subsolutions vanishing on a portion of $\partial\O$ containing $z_0$. This is summarized in the following corollary, in which the constant $\gamma\in(0,1)$ is the one from \eqref{bogrthestimate}. 

\begin{Corollary}
Consider an open set $\O \Subset O$, and let $z_0\in\partial\O$. Assume either {\rm{(H1)}} or {\rm{(H2)}} from Theorem \ref{bogrth} holds. Suppose there exists $r_1\in (0,r_0]$ such that $u \geq 0$ and $\L_A u \geq f$ in $\O\cap B_{\t r_1}(z_0)$ for $f \in L^{\infty}(\O\cap B_{\t r_1}(z_0))$, and $u = 0$ on $B_{\t r_1}(z_0) \cap \partial \O$. Then $u$ is $d$-H\"older continuous at $z_0$; that is, for any $0<\beta<\min\left\{\frac{\log(\gamma^{-1})}{\log(\tau)},1\right\}$ there exists a constant $C > 0$ (depending on $\beta$ and on the $L^\infty$ norms of $f$ and $u$) such that
$$u(z) \leq C \ d(z, z_0)^{\beta} \qquad \text{ for all } \ z \in B_{r_1}(z_0) \cap \O.$$
\end{Corollary}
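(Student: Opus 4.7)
The plan is to iterate the Boundary Growth Lemma on a sequence of metric balls shrinking geometrically to $z_0$, and then convert the resulting decay of the supremum of $u$ into a pointwise $d$-Hölder estimate at $z_0$. This mirrors the derivation of the interior $d$-Hölder continuity in Corollary \ref{oscillationdecay}.

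First, for every $r\in (0,r_1]$ I would apply Theorem \ref{bogrth} with the choice $D:=\O\cap B_{\tau r}(z_0)$. Since $r\leq r_1\leq r_0$, the relevant geometric hypothesis {\rm{(H1)}} or {\rm{(H2)}} holds at scale $r$; moreover $\partial D\cap B_{\tau r}(z_0)\subseteq \partial\O\cap B_{\tau r_1}(z_0)$, where $u$ vanishes by assumption, so all hypotheses of Theorem \ref{bogrth} are met (if $D\cap B_r(z_0)=\emptyset$ there is nothing to prove at that scale). Writing $M(s):=\sup_{\O\cap B_s(z_0)}u$, the estimate \eqref{bogrthestimate} then reads
$$M(r) \,\leq\, \gamma\, M(\tau r) + \frac{\tau^2}{4n\lambda}\,||f||_{L^\infty(\O\cap B_{\tau r_1}(z_0))}\,r^{2}, \qquad r\in (0,r_1].$$

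Next, I would iterate this recursion along the geometric sequence $r_k:=r_1\tau^{-k}$, $k\in\N$, obtaining
$$M(r_k) \,\leq\, \gamma^{k}\,M(r_1) + \frac{\tau^2}{4n\lambda}\,||f||_{L^\infty(\O\cap B_{\tau r_1}(z_0))}\sum_{j=1}^{k}\gamma^{k-j}r_j^{2}.$$
Fix $\beta\in \bigl(0,\min\{\log(\gamma^{-1})/\log\tau,\,1\}\bigr)$. Because $\gamma\tau^\beta<1$ and, since $\beta<1\leq 2$, also $\tau^{-2}\leq \tau^{-\beta}$, both $\gamma^{k}$ and the geometric-type sum on the right are bounded by $C\,\tau^{-k\beta}$ (this is the content of the standard iteration \cite[Lemma 8.23]{GTBook}). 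Hence
$$M(r_k) \,\leq\, C\,\bigl(||u||_{L^\infty(\O\cap B_{\tau r_1}(z_0))}+||f||_{L^\infty(\O\cap B_{\tau r_1}(z_0))}\bigr)\,\tau^{-k\beta}.$$
For an arbitrary $z\in \O\cap B_{r_1}(z_0)$ with $d(z,z_0)>0$, pick the unique $k\geq 0$ with $r_{k+1}<d(z,z_0)\leq r_k$: then
$$u(z)\,\leq\, M(r_k)\,\leq\, C\,\tau^{-k\beta}\,=\,C\,\tau^{\beta}r_1^{-\beta}\,r_{k+1}^{\beta}\,\leq\, C'\, d(z,z_0)^{\beta},$$
which is the desired estimate; the case $d(z,z_0)=0$ follows by continuity since $u\equiv 0$ on $\partial\O\cap B_{\tau r_1}(z_0)$.

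The key conceptual step is the choice $D=\O\cap B_{\tau r}(z_0)$ in the first paragraph, which places the Dirichlet-type information on the correct subset of $\partial D$ so that the Boundary Growth Lemma can be invoked at every dyadic scale; everything else is a routine iteration. There is no serious obstacle once Theorem \ref{bogrth} is available, the only subtle point being that one must verify $\beta\leq 2$ so that the quadratic source-term contribution $r^{2}$ decays faster than $r^{\beta}$ and does not spoil the Hölder exponent determined by $\gamma$ and $\tau$.
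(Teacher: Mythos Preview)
Your proposal is correct and follows essentially the same approach as the paper: both apply Theorem \ref{bogrth} with the choice $D=\Omega\cap B_{\tau r}(z_0)$ at every scale $r\in(0,r_1]$, obtain the recursion $M(r)\leq\gamma\,M(\tau r)+Cr^2$, and then invoke the standard iteration (e.g.\ \cite[Lemma 8.23]{GTBook}) to deduce the $d$-H\"older bound. The only cosmetic difference is that the paper phrases the recursion as an oscillation decay (noting $\inf_{\Omega\cap B_\rho(z_0)}u=0$ since $u\geq 0$ and $u(z_0)=0$), whereas you work directly with the supremum; these are equivalent here.
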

\begin{proof}
For every $r \in (0,r_1)$ we can apply Theorem \ref{bogrth} to the function $u$ in the region $D = \O\cap B_{\tau r}(z_0)$. Notice that $D\cap B_r(z_0)=\O\cap B_{ r}(z_0)\neq\emptyset$ since $z_0\in\partial\O$. Moreover, since $u$ is non-negative in $D$ and vanishes at $z_0$, we have $\inf_{B_{\rho}(z_0) \cap \O} u = 0$ for any $\rho \in (0,\tau r_1)$. Therefore, by \eqref{bogrthestimate},
\begin{align*}
\underset{B_{r}(z_0) \cap \O}{\osc}  u & =\sup_{B_r(z_0) \cap D} u \\
& \leq \gamma \sup_{D} u + \frac{1}{4n\lambda} ||f^-||_{L^{\infty}(D)} (\tau r)^2 \\
& =  \gamma \underset{B_{\t r}(z_0) \cap \O}{\osc}  u + \frac{1}{4n\lambda} ||f^-||_{L^{\infty}(D)} (\tau r)^2.
\end{align*}
A standard iterative argument (as in the proof of Corollary \ref{oscillationdecay}) yields the desired H\"older estimate at $z_0$.
\end{proof}

Using Theorem \ref{bogrth}, we can also derive uniform boundary H\"older estimates for (possibly sign-changing) solutions to $\L_A u =f$ in $\Omega$ that vanish on $\partial\O$. This is the content of the following result where, once again, the constant $\gamma\in(0,1)$ is the one from \eqref{bogrthestimate}. 

\begin{Theorem}\label{thm:bdy-Holder}
Consider an open set $\O\Subset O\cap B_{R_0}(0)$. Assume either
\begin{itemize}
\item[(uH1)] $\O$ satisfies the uniform exterior ball containment condition, or
\item[(uH2)] $\O$ satisfies the uniform positive exterior density condition and $\Lop_A$ satisfies \eqref{CordesLandis}.
\end{itemize} 
Suppose $u\in C^2(\O)\cap C(\overline{\O})$ solves the inhomogeneous Dirichlet problem
$$
\begin{cases}
\L_A u = f & \quad \text{in } \O \\
u = 0 & \quad \text{on } \partial\O
\end{cases}
$$
for some $f \in L^{\infty}(\O)$. Then for any $0<\beta<\min\left\{\frac{\log(\gamma^{-1})}{\log(\tau)},1\right\}$, there exists a constant $C_\beta > 0$ such that
\begin{equation}\label{bdryHolderestimate}
|u(z)| \leq C_\beta\, ||f||_{L^{\infty}(\O)} \ {\rm{dist}}(z, \partial\O)^{\beta} \qquad \text{ for all } \ z \in \O.
\end{equation}
In particular, $u$ is $\beta$-H\"older continuous at $\partial \O$.
\end{Theorem}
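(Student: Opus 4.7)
The overall plan is to combine Theorem \ref{bogrth} with a standard oscillation-decay iteration. First, I would record an a priori $L^\infty$ bound by comparing $u$ against the barriers $\pm \frac{\|f\|_{L^{\infty}(\O)}}{4n\lambda}(R_0^2 - |x|^2)$, which are non-negative on $\O \subset B_{R_0}(0)$ and whose image under $\L_A$ is majorized by $\mp \|f\|_{L^{\infty}(\O)}$ via the computation \eqref{4nla}. The weak comparison principle then yields $\|u\|_{L^{\infty}(\O)} \leq \frac{R_0^2}{4n\lambda}\|f\|_{L^{\infty}(\O)}$; this will allow the $\|u\|_{L^{\infty}(\O)}$ term that would otherwise appear in the final estimate to be absorbed into the $\|f\|_{L^{\infty}(\O)}$ term.

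Next, fix $z_0 \in \partial\O$ and, for $0 < \t r \leq r_0$, set $M(r) := \sup_{B_r(z_0) \cap \O} u$, $m(r) := \inf_{B_r(z_0) \cap \O} u$, and $\omega(r) := M(r) - m(r)$. I would apply Theorem \ref{bogrth} separately to $u$ on $D^+ := \{u > 0\} \cap \O \cap B_{\t r}(z_0)$ and to $-u$ on $D^- := \{u < 0\} \cap \O \cap B_{\t r}(z_0)$. In each case the candidate set is contained in $\O \cap B_{\t r}(z_0)$, the relevant function ($u$ or $-u$) is non-negative there, solves an equation of the form $\L_A \cdot = \pm f$ (so one may take $\|f^-\|_{L^{\infty}}$ of Theorem \ref{bogrth} to be bounded by $\|f\|_{L^{\infty}(\O)}$), and vanishes on $\partial D^{\pm} \cap B_{\t r}(z_0)$: indeed, that intersection consists of points of $\partial\O$, where $u = 0$ by the Dirichlet condition, together with interior zeros of $u$ forced by continuity and the definition of $D^{\pm}$. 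The growth lemma then delivers
\[M(r) \leq \gamma M(\t r) + C r^2 \|f\|_{L^{\infty}(\O)} \quad\text{and}\quad -m(r) \leq \gamma(-m(\t r)) + C r^2 \|f\|_{L^{\infty}(\O)}\]
(with a trivial bound when $D^{\pm}\cap B_r(z_0)=\emptyset$). Since $u(z_0) = 0$ forces $m(\rho) \leq 0 \leq M(\rho)$ for every $\rho > 0$, adding the two inequalities yields the boundary oscillation decay
\[\omega(r) \leq \gamma\,\omega(\t r) + 2C r^2 \|f\|_{L^{\infty}(\O)}.\]

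The final step is a standard geometric iteration along the sequence $r_k := \t^{-k} r_0$ (as invoked in the proof of Corollary \ref{oscillationdecay}) to produce
\[\omega(r) \leq C_\beta\,r^\beta\,\left(\|u\|_{L^{\infty}(\O)} + \|f\|_{L^{\infty}(\O)}\right)\]
for any $\beta < \min\{\log(\gamma^{-1})/\log\t,\,1\}$ and $r \leq r_0$. For $z \in \O$ with $\dist(z, \partial\O) \leq r_0/2$, I pick a $d$-nearest point $z_0 \in \partial\O$ (so $d(z,z_0)=\dist(z,\partial\O)$), and since $u(z_0)=0 \in [m(\rho),M(\rho)]$ for every $\rho \geq d(z,z_0)$ I obtain $|u(z)| \leq \omega(2\,\dist(z,\partial\O))$; combining with the a priori bound to absorb $\|u\|_{L^{\infty}(\O)}$ yields \eqref{bdryHolderestimate}. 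For $z$ with $\dist(z,\partial\O) > r_0/2$, the same estimate follows at once from the a priori $L^\infty$ bound and the trivial inequality $\dist(z,\partial\O)^\beta \geq (r_0/2)^\beta$. The main point requiring care is the verification that $u$ vanishes on $\partial D^\pm \cap B_{\t r}(z_0)$, without which Theorem \ref{bogrth} does not apply; however, this is a clean consequence of continuity together with the homogeneous Dirichlet condition, and with it in hand the argument is a boundary counterpart of the interior oscillation-decay proof in Corollary \ref{oscillationdecay}.
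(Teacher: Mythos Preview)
Your argument is correct and yields the stated estimate. The approach, however, differs from the paper's. You fix a boundary point $z_0$, apply Theorem~\ref{bogrth} to the positivity and negativity sets of $u$ inside $B_{\t r}(z_0)$, and iterate the resulting oscillation decay $\omega(r)\le\gamma\,\omega(\t r)+Cr^2\|f\|_{L^\infty}$ along dyadic scales, exactly as in the unnamed corollary preceding Theorem~\ref{thm:bdy-Holder}; uniformity of $r_0,\theta$ (or $\theta_0$) under (uH1)/(uH2) makes the constant $C_\beta$ independent of $z_0$, and the a~priori bound $\|u\|_{L^\infty}\le\frac{R_0^2}{4n\lambda}\|f\|_{L^\infty}$ handles points far from $\partial\O$. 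The paper instead follows the Cho--Safonov strategy of \cite{ChoSafonov}: after the same $L^\infty$ bound, it looks directly at the supremum $M=\sup_{\O_c}(u-c)\,\mathrm{dist}(\cdot,\partial\O)^{-\beta}$, locates a maximizing point $\zeta_0$, picks its nearest boundary point $z_0$, and applies Theorem~\ref{bogrth} \emph{once} to the shifted function $\tilde u=u-c-(\epsilon r)^\beta M$ on $D=\O_c\cap B_{\t r}(z_0)\cap\{\tilde u>0\}$; the choice $\beta<\log(\gamma^{-1})/\log\t$ allows one to absorb the $M$-term on the right and obtain the bound on $M$ without any iteration. Your route is more elementary and is essentially a uniform boundary version of Corollary~\ref{oscillationdecay}; the paper's route avoids the scale-by-scale iteration at the cost of the extra $c\to0^+$ limiting device and the auxiliary parameter $\epsilon$.
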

\begin{proof}
The proof follows the strategy from \cite[Theorem 3.5]{ChoSafonov}. A straightforward application of the maximum principle to the functions $v_{\pm}=\pm u-\frac{||f||_{L^{\infty}}}{4n\lambda}\left(R^2_0-|x|^2\right)$ (see also \eqref{4nla} above) yields
\begin{equation}\label{inftoinf}
||u||_{L^{\infty}(\O)}\leq \frac{R_0^2}{4n\lambda}||f||_{L^{\infty}(\O)}\qquad\mbox{for all }z\in\O.
\end{equation}
Denote $\O^+=\O\cap\{u>0\}$ and $\O^-=\O\cap\{u<0\}$. Without loss of generality, we may assume that $\O^+$ and $\O^-$ are both non-empty. We will show that, for any $0<\beta<\min\left\{\frac{\log(\gamma^{-1})}{\log(\tau)},1\right\}$, there exists a constant $C_\beta > 0$ such that
\begin{equation}\label{halfthisisok}
\sup_{\O^+} \left(\frac{u}{{\rm{dist}}(\cdot, \partial\O)^{\beta}}\right) \leq C_\beta\, ||f||_{L^{\infty}}\quad\mbox{ and }\quad \sup_{\O^-} \left(\frac{-u}{{\rm{dist}}(\cdot, \partial\O)^{\beta}}\right) \leq C_\beta\, ||f||_{L^{\infty}}.
\end{equation} 
Once the estimates in \eqref{halfthisisok} are verified, we will immediately obtain \eqref{bdryHolderestimate}.

It suffices to prove only the first estimate in \eqref{halfthisisok} as the second one will follow from the first by considering $-u$ instead of $u$. Consider $c>0$ small enough so that the set $\O_c=\O\cap\{u>c\}$ is non-empty. Since $u$ is continuous and vanishes on $\partial\O$, we have ${\rm{dist}}(\O_c, \partial\O)^{\beta}>0$ and the function $ z\mapsto {\rm{dist}}(z, \partial\O)^{-\beta} \left(u(z) - c\right)$ belongs to $C\left(\overline{\O_c}\right)$. Hence, there exists $\zeta_0\in \O_c$ such that
$$
{\rm{dist}}(\zeta_0, \partial\O)^{-\beta} \left(u(\zeta_0) - c\right)=\sup_{\O_c}\frac{u-c}{{\rm{dist}}(\cdot, \partial\O)^{\beta}} =:M>0.
$$
Even though $M$ depends on $c$, we will suppress this dependence, as our goal is to obtain an upper bound on $M$ independent of $c$. Once we prove such a bound, we can let $c \to 0^+$ and obtain \eqref{halfthisisok}.

Denote $r={\rm{dist}}(\zeta_0, \partial\O)>0$ and let $z_0\in\partial\O$ be such that $d(\zeta_0,z_0)=r$. If $r\geq r_0$, then by \eqref{inftoinf} we readily obtain
\begin{equation}\label{Mfacile}
M\leq \frac{u(\zeta_0)}{r^\beta}\leq \frac{R_0^2}{4n\lambda r_0^\beta }||f||_{L^{\infty}}.
\end{equation}
On the other hand, if $0<r\leq r_0$, consider the function
$$
\tilde{u}=u-c-(\epsilon r)^\beta M
$$
for $\epsilon\in (0,1)$ to be determined. Notice that $\tilde{u}(\zeta_0)=Mr^\beta(1-\epsilon^\beta)>0$. Consider the open set
$$
D=\O_c \cap B_{\t r}(z_0)\cap \{\tilde{u}>0\}. 
$$
We already know that $\zeta_0\in D\cap \partial B_{r}(z_0)$ which implies $D\cap B_r(z_0)\neq \emptyset$. We also notice that $D\subseteq \{{\rm{dist}}(\cdot, \partial\O)>\epsilon r\}$ since 
$$(\epsilon r)^\beta M<u(z)-c\leq {\rm{dist}}(z, \partial\O)^\beta M \qquad \text{ for any } z\in D.$$ 
Furthermore, since $\tilde{u}=-(\epsilon r)^\beta M<0$ on $\partial \O_c$, we have $\tilde{u}=0$ on $\partial D\cap  B_{\t r}(z_0)$. We can thus apply Theorem \ref{bogrth} to obtain 
$$
\tilde{u}(\zeta_0)\leq\sup_{D \cap B_r(z_0)} \tilde{u} \leq \gamma \, \sup_{D} \tilde{u} + \frac{1}{4n\lambda} ||f||_{L^{\infty}} (\tau r)^2,
$$
where the first inequality is a consequence of the continuity of $u$. Keeping in mind that $z_0\in\partial\O$ we have for any $z\in \O_c\cap B_{\t r}(z_0)$ 
$$u(z)-c\leq M {\rm{dist}}(z, \partial\O)^\beta \leq M (\t r)^\beta$$
which allows us to deduce
\begin{align*}
M r^\beta=u(\zeta_0)-c &=\tilde{u}(\zeta_0)+Mr^\beta \epsilon^\beta \\
& \leq \gamma \, M r^\beta \left( \t ^\beta -\epsilon^\beta\right) + \frac{1}{4n\lambda} ||f||_{L^{\infty}} (\tau r)^2 + Mr^\beta \epsilon^\beta.
\end{align*}
Rearranging, we have
$$
M\leq M (\gamma \tau^\beta + (1-\gamma)\epsilon^\beta )+ \frac{\t^2}{4n\lambda} ||f||_{L^{\infty}} r^{2-\beta}.
$$
Since $\beta<\frac{\log(\gamma^{-1})}{\log(\tau)}$ implies $\gamma\t^\beta <1$, we can choose $\epsilon\in (0,1)$ (depending only on $\t, \gamma, \beta$) such that $\gamma \tau^\beta + (1-\gamma)\epsilon^\beta < 1$. With such a choice, we finally obtain the bound
\begin{equation}\label{Mdifficile}
M\leq  \frac{\t^2 r_0^{2-\beta}}{1-\gamma \tau^\beta - (1-\gamma)\epsilon^\beta}\frac{||f||_{L^{\infty}}}{4n\lambda}
\end{equation}
in the case $0<r\leq r_0$. Combining \eqref{Mfacile} and \eqref{Mdifficile}, we find that
$$
M\leq C_\beta ||f||_{L^{\infty}}\quad\mbox{ where }C_\beta=\frac{1}{4n\lambda}\max\left\{\frac{\t^2 r_0^{2-\beta}}{1-\gamma \tau^\beta - (1-\gamma)\epsilon^\beta},\frac{R_0^2}{ r_0^\beta }\right\}.
$$
Therefore, we have for all $c > 0$ sufficiently small
$$
\frac{u(z)-c}{{\rm{dist}}(z, \partial\O)^{\beta}}\leq C_\beta ||f||_{L^{\infty}}\qquad\mbox{ for all }z\in\O_c.
$$
Since the constant $C_{\beta}$ is independent of $c$, we can let $c\to 0^+$ and obtain
$$
\frac{u(z)}{{\rm{dist}}(z, \partial\O)^{\beta}}\leq C_\beta ||f||_{L^{\infty}}\qquad\mbox{ for all }z\in\O^+,
$$
which completes the proof of \eqref{halfthisisok}.
\end{proof}

%%%%%%%%%%%%%%%%%%%%%%%%%%%%%%%%%%
%%%%%%%%%%%%%%%%%%%%%%%%%%%%%%%%%%

\section{Boundary Lipschitz Regularity}\label{sec:lipschitz}

%%%%%%%%%%%%%%%%%%%%%%%%%%%%%%%%%%
%%%%%%%%%%%%%%%%%%%%%%%%%%%%%%%%%%

In this section, we initiate the study of derivative estimates for the inhomogeneous equation $\L_A u = f$ on portions of the boundary where the solution $u$ vanishes. Compared to the results of the previous section, the results proved in this section will require stronger hypotheses not only on the domain geometry, but also on the function $f$. Indeed, it turns out that $f \in L^{\infty}$ will not suffice and more precise assumptions must be made about the behavior of $f$ near the so-called characteristic points of the domain boundary. We note, however, that our results are new even for the homogeneous equation $\L_A u = 0$, and that we do not impose the hypothesis \eqref{CordesLandis} in this section. A similar approach for homogeneous equations in non-divergence form has been carried out in \cite{MartinoTralli2016} where analogues of the classical Hopf lemma are established under appropriate geometric conditions at characteristic boundary points.

Let us begin by stating the regularity hypothesis on the boundary that we will need.

\begin{Definition}\label{extball}
We say that $\O$ satisfies the \emph{exterior touching ball condition} at $z_0\in\partial\O$ if there exist $r_0>0$ and $p_0\in\Hn$ such that 
\begin{equation}\label{touching}
B_{r_0}(p_0) \subset \Hn \setminus \overline{\O} \quad\mbox{ and }\quad z_0\in \partial B_{r_0}(p_0).
\end{equation}
%We say that $\O$ satisfies the uniform exterior ball condition if $\O$ satisfies the exterior ball condition at every boundary point $z_0\in\partial\O$ and the choice for $r_0$ can be made uniform with respect to $z_0\in\partial\O$.
\end{Definition}

By \cite[Remark 3.5]{Lanconelli-Uguzzoni-PoissonKernel}, every (Euclidean) convex subset of $\H^n$ satisfies the exterior ball condition at any boundary point (and for arbitrary $r_0$). It is also proved in \cite[Lemma 3.2]{Lanconelli-Uguzzoni-PoissonKernel} that, under an exterior ball condition at the point $z_0$ and provided the boundary data $\varphi$ vanish in a neighborhood of $z_0$, any solution of $\Delta_X u=0$ with boundary value $\varphi$ satisfies a $d$-Lipschitz bound in a neighborhood of $z_0$. We extend this result to the case of general non-divergence form operators $\L_A$ with a source term $f$. 

In the barrier argument carried out below, $f$ will need to belong to a (possibly) smaller space than $L^\infty$. Let us fix the formal definition for this functional framework.

\begin{Definition}\label{weightDef}
Given an open bounded set $D \subset \mathbb{H}^n$ and a non-negative function $\omega \in L^{\infty}(D)$, we define the space $L^{\infty}(D,\omega)$ to be the set of all functions $f\in L^{\infty}(D)$ such that 
\[|f(z)| \leq C\omega(z) \quad \text{for all } z \in D.\]
We denote
\[
||f||_{L^{\infty}(D,\omega)} := \inf\{C : |f(z)| \leq C\omega(z) \text{ for all } z \in D \}.
\]
\end{Definition}

The usual $L^{\infty}$ space corresponds to the weight $\omega(z) \equiv 1$. In general, $L^{\infty}(D,\omega) \subset L^{\infty}(D)$, with strict containment if $\omega$ is allowed to vanish on a subset of $\overline{D}$.

The weights $\omega$ appearing in the next result will be expressed in terms of the horizontal gradient of the distance function $d(z, z_0)$ from a point $z_0 \in \partial \O$. Recall that, from \eqref{(i)}, one has
$$
|\nabla_X\rho(z)|^2=\frac{|x|^2}{\rho^2(z)}\leq 1 \qquad\mbox{ for every }z\in \Hn\setminus\{0\}.
$$

\begin{Theorem}\label{thm:bdy-Lip}
Consider an open set $\O \Subset O$, and let $z_0\in\partial\O$. Assume that $\O$ satisfies the exterior ball condition at $z_0$ as in Definition \ref{extball}, 
with radius $r_0>0$ and center $p_0\in\Hn$. Let $\omega_{p_0}(z)=|\nabla_X\rho (p_0^{-1}\circ z)|^{2}$. Suppose that $u\in C^2(\O \cap B_{3r_0}(z_0))\cap C(\overline{\O \cap B_{3r_0}(z_0)})$ satisfies $\L_A u \geq f$ in $\O \cap B_{3r_0}(z_0)$ with $f \in L^{\infty}(\O\cap B_{3r_0}(z_0),\omega_{p_0})$ and $u \leq 0$ on $B_{3r_0}(z_0) \cap \partial \O$. Then there exist a universal constant $b>1$ and a constant $C>0$ (which depends on universal parameters and, in addition, on $||f^-||_{L^{\infty}(\O\cap B_{3r_0}(z_0),\omega_{p_0})}$ and $||u^+||_{L^{\infty}(\O\cap B_{3r_0}(z_0))}$) such that
$$u(z) \leq C\, d(z, z_0) \qquad \text{ for all } \ z \in \O\cap B_{\frac{r_0}{b}}(z_0).$$
\end{Theorem}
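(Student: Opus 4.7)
The plan is a barrier argument anchored at the exterior touching ball $B_{r_0}(p_0)$: I will construct an explicit supersolution $v$ on the domain $D:=\O\cap B_{3r_0}(z_0)$ which vanishes at $z_0$, dominates $u$ on $\partial D$, and grows linearly in $d(\cdot,z_0)$ in a neighborhood of $z_0$; the weak comparison principle then transfers this linear growth to $u$. The building block is the left-translate of the power function $\psi_{\a}$ from Lemma \ref{subsolutionlemma}, namely
\[
h(z):=\psi_{\a}(p_0^{-1}\circ z)=d(z,p_0)^{-4\a}.
\]
I fix $\a$ strictly greater than $\tfrac{1}{4}\bigl((Q+1)\tfrac{\Lambda}{\lambda}-3\bigr)$ (no upper bound on $\Lambda/\lambda$ is needed). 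By the left-invariance of the vector fields $X_j$ and the sharp form \eqref{weightss} of the subsolution inequality, I obtain
\[
\L_A h(z)\geq c_0\,\frac{\omega_{p_0}(z)}{d(z,p_0)^{4\a+2}}
\]
for a structural constant $c_0>0$, where I have used that the horizontal component of $p_0^{-1}\circ z$ is $x-x_{p_0}$ and that $|x-x_{p_0}|^2/d(z,p_0)^2=\omega_{p_0}(z)$.

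The barrier is $v(z):=M\bigl(1-r_0^{4\a}\,d(z,p_0)^{-4\a}\bigr)$, with $M>0$ to be chosen. The exterior touching ball condition forces $d(\cdot,p_0)\geq r_0$ on $\overline{D}$, so $v\geq 0$ in $D$ and $v$ vanishes along $\partial B_{r_0}(p_0)\cap\overline{D}$ (and in particular at $z_0$). Since $d(\cdot,p_0)\leq 4r_0$ on $D$, the previous estimate yields
\[
\L_A v(z)\leq -\frac{c_1 M}{r_0^2}\,\omega_{p_0}(z)
\]
for a structural $c_1>0$. Boundary values are matched as follows: $u\leq 0\leq v$ on $\partial\O\cap B_{3r_0}(z_0)$; while on the outer piece $\O\cap\partial B_{3r_0}(z_0)$ the reverse triangle inequality gives $d(z,p_0)\geq 2r_0$ and hence $v\geq M\bigl(1-2^{-4\a}\bigr)$. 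Choosing
\[
M:=\max\!\left\{\frac{\|u^+\|_{L^\infty(D)}}{1-2^{-4\a}},\;\frac{r_0^2\,\|f^-\|_{L^\infty(D,\omega_{p_0})}}{c_1}\right\}
\]
simultaneously secures $v\geq u$ on $\partial D$ and $\L_A v\leq -f^-\leq f$ in $D$. The weak comparison principle from Section \ref{sec:preliminaries} then gives $u\leq v$ in $D$. Finally, the elementary inequality $1-(1+y)^{-4\a}\leq 4\a\,y$ for $y\geq 0$, combined with the triangle inequality $d(z,p_0)-r_0\leq d(z,z_0)$, produces
\[
v(z)\leq \frac{4\a M}{r_0}\,d(z,z_0),
\]
and restricting to $B_{r_0/b}(z_0)$ for any universal $b>1$ (e.g.\ $b=2$, which keeps $d(z,p_0)\leq \tfrac{3r_0}{2}$) yields the stated bound.

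The technical heart of the argument is the appearance of the weight $\omega_{p_0}$ on \emph{both} sides of the supersolution inequality. The computation of $\L_A h$ degenerates exactly like $\omega_{p_0}$ along the characteristic ray from $p_0$ (the direction where $|x-x_{p_0}|$ vanishes), so absorbing the inhomogeneous source into the barrier is only possible if $|f|\lesssim\omega_{p_0}$ in a pointwise sense. A bare $L^\infty$ hypothesis on $f$ would ruin the comparison at precisely the worst points, namely those approaching $z_0$ along the characteristic direction; this is the reason the weighted space $L^\infty(D,\omega_{p_0})$ is indispensable in Theorem \ref{thm:bdy-Lip} and cannot be replaced by $L^\infty(D)$ in general.
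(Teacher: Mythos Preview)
Your proof is correct and follows the same barrier strategy as the paper: both build the supersolution from the left-translate of $\psi_\a$ anchored at $p_0$, verify the supersolution inequality via \eqref{weightss} (which is where the weight $\omega_{p_0}$ enters), and apply comparison on an annular-type region around the exterior ball. The paper works instead on the slightly smaller domain $\O\cap B_{2r_0}(p_0)$ and normalizes its barrier so that it equals $M$ exactly on $\partial B_{2r_0}(p_0)$, but these are cosmetic differences.

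The one substantive variation is your final step. The paper extracts the Lipschitz bound from $w(z)-w(z_0)$ by invoking a stratified mean-value inequality $|g(z)-g(z_0)|\le c_1\,d(z,z_0)\sup_{B_{b_1 d(z,z_0)}(z_0)}|\nabla_X g|$ from \cite[Theorem 20.3.1]{BLU2007}; the universal constant $b$ in the statement is $b=2b_1$, needed to keep the ball where the supremum is taken away from $p_0$. You instead use the elementary convexity inequality $1-(1+y)^{-4\a}\le 4\a y$ together with the triangle inequality $d(z,p_0)-r_0\le d(z,z_0)$. Your route is more self-contained, avoids the external reference, and in fact delivers the linear bound on all of $\O\cap B_{3r_0}(z_0)$, so the restriction to $B_{r_0/b}(z_0)$ is not really needed in your version (any $b>\tfrac13$ would do).
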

\begin{proof}
Denote $D=\O\cap B_{2r_0}(p_0)$ and $p_0=(p_0^h,p_0^v) \in \R^{2n} \times \R$. By Definition \ref{extball}, we have that $\O\cap B_{3r_0}(z_0) \supseteq D\neq \emptyset$ and $p_0\notin \overline{D}$. Fix $\a, M$ as follows
$$
\a>\frac{1}{4}\left( (Q+1)\frac{\Lambda}{\lambda} - 3\right)\qquad
M:=\max\left\{ ||u^+||_{L^{\infty}(D)}, 
\frac{(2^{4\a}-1)r_0^2||f^-||_{L^{\infty}(D,\omega_{p_0})}}{\lambda \a(4\a + 3 - (Q+1)\frac{\Lambda}{\lambda})}\right\}.
$$
We recall that $\a>0$, and we may also assume $M>0$ (otherwise $u\leq 0$ in $D$ and there is nothing to prove). Define
\begin{equation}\label{barrLip}
w(z)=M \frac{r_0^{-4\a}-\psi_\a(p_0^{-1}\circ z)}{r_0^{-4\a}-(2 r_0)^{-4\a}}.
\end{equation}
Thanks to the choice of $\a$ and the left-invariance of the vector fields $X_i$, we can repeat the computation for establishing \eqref{weightss} to obtain
$$
\L_A \left( \psi_{\a}(p_0^{-1}\circ \cdot)\right)(z)\geq  4\lambda  \a |x-p_0^h|^2 \phi^{-\alpha-1}(p_0^{-1}\circ z) \left( 4\a + 3 - (Q+1)\frac{\Lambda}{\lambda} \right)
$$
for all $z\neq p_0$. Since $\frac{M}{r_0^{-4\a}-(2 r_0)^{-4\a}} \geq 0$, for every $z\in D$ we deduce

\begin{align*}
\L_A w (z) &\leq \frac{-4\lambda \a M}{r_0^{-4\a}-(2 r_0)^{-4\a}} \frac{ 4\a + 3 - (Q+1)\frac{\Lambda}{\lambda}}{\rho^{4\alpha+2}(p_0^{-1}\circ z)}\frac{|x-p_0^h|^2}{\rho^2(p_0^{-1}\circ z)}\\
&\leq \frac{-4\lambda \a M}{r_0^{-4\a}-(2 r_0)^{-4\a}} \frac{ 4\a + 3 - (Q+1)\frac{\Lambda}{\lambda}}{(2 r_0)^{4\a+2}}\frac{|x-p_0^h|^2}{\rho^2(p_0^{-1}\circ z)}\\
&=-M|\nabla_X\rho (p_0^{-1}\circ z)|^{2}\frac{\lambda \a(4\a + 3 - (Q+1)\frac{\Lambda}{\lambda})}{(2^{4\a}-1)r_0^2}.
\end{align*}
The choices of $M$ and $\omega_{p_0}$ yield
\begin{align*}
\L_A w (z) &\leq -\omega_{p_0}(z) ||f^-||_{L^{\infty}(D,\omega_{p_0})} %= -\omega_{p_0}(z) \sup_{\zeta\in D}\left(\frac{f^-(\zeta)}{\omega_{p_0}(\zeta)}\right)
\leq -f^-(z)\leq f(z) \quad \text{for all } z \in D.
\end{align*}
Here, we have used that $f^{-}(z)\leq  ||f^-||_{L^{\infty}(D,\omega_{p_0})}\omega_{p_0}(z)$, which is a direct consequence of Definition \ref{weightDef}. Therefore, $\L_A w\leq \L_A u$ in $D$. 

On the other hand, for any $z\in \overline{\O}\cap \partial B_{2r_0}(p_0)$ we have $u(z)\leq u^+(z)\leq ||u^+||_{L^{\infty}(D)} \leq M =w(z)$. Moreover, for any $z\in B_{2r_0}(p_0)\cap \partial \O $ we have $u(z)\leq 0\leq w(z)$ by the geometric condition \eqref{touching}. Combining these two observations, we find that $u\leq w$ on $\partial D$. The comparison principle then implies $u \leq w$ in $D$. In particular
\begin{equation}\label{nowLagrangia}
u(z)\leq w(z)=w(z)-w(z_0) \quad\mbox{ for all }z\in \O\cap B_{r_0}(z_0).
\end{equation}

We now recall (see, for instance, \cite[Theorem 20.3.1]{BLU2007}) that there exist universal constants $c_1, b_1\geq 1$ such that
$$|g(z)-g(z_0)|\leq c_1 d(z,z_0) \sup_{d(\zeta,z_0)\leq b_1 d(z,z_0)}|\nabla_X g(\zeta)|$$
for any smooth function $g$. Hence, keeping in mind that $w$ is smooth away from $p_0$ and $d(z_0,p_0)=r_0$, we can fix $b=2b_1$ so that $b_1d(z,z_0)\leq \frac{1}{2}r_0$ for any $z\in B_{\frac{r_0}{b}}(z_0)$ and notice that
\begin{align*}
\sup_{\zeta\in B_{\frac{r_0}{2}}(z_0)}|\nabla_X w(\zeta)|&\leq \frac{4\a M }{r_0^{-4\a}-(2 r_0)^{-4\a}}\sup_{\zeta\in B_{\frac{r_0}{2}}(z_0)}\frac{|\nabla_X \rho(p_0^{-1}\circ\zeta)|}{\rho^{4\a+1}(p_0^{-1}\circ\zeta)} \\
& \leq \frac{4\a M }{r_0^{-4\a}-(2 r_0)^{-4\a}} \frac{2^{4\a+1}}{r_0^{4\a+1}}\\
& = \frac{\a 2^{4\a+3} }{1-2^{-4\a}} \frac{M}{r_0}
\end{align*}
Setting the final expression above to be $C$, we thus have
$$|w(z)-w(z_0)|\leq C d(z,z_0)\quad \mbox{ for all }z\in B_{\frac{r_0}{b}}(z_0).$$
The proof is complete once we insert the previous bound into \eqref{nowLagrangia}. 
\end{proof}

\begin{Corollary}\label{cor5}
With the same setup as Theorem \ref{thm:bdy-Lip}, suppose now that $u\in C^2(\O \cap B_{3r_0}(z_0))\cap C(\overline{\O \cap B_{3r_0}(z_0)})$ solves $\L_A u= f$ in $\O \cap B_{3r_0}(z_0)$ for $f \in L^{\infty}(\O\cap B_{3r_0}(z_0),\omega_{p_0})$, and $u = 0$ on $B_{3r_0}(z_0) \cap \partial \O$. Then $u$ is $d$-Lipschitz continuous at $z_0$. More specifically, for the same constants $b>1$ and $C>0$ as in Theorem \ref{thm:bdy-Lip}, we have
$$|u(z)| \leq C\, d(z, z_0) \qquad \text{ for all } \ z \in \O\cap B_{\frac{r_0}{b}}(z_0).$$
\end{Corollary}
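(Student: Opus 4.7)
The plan is to deduce this corollary directly from Theorem \ref{thm:bdy-Lip} by applying it twice, once to $u$ and once to $-u$, and then combining the two one-sided bounds into a two-sided one.

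First, $u$ itself satisfies the hypotheses of Theorem \ref{thm:bdy-Lip}: from $\L_A u = f$ one has in particular $\L_A u \geq f$ in $\Omega \cap B_{3r_0}(z_0)$, and the Dirichlet condition $u = 0$ on $B_{3r_0}(z_0) \cap \partial \Omega$ certainly gives $u \leq 0$ there. Setting $D := \Omega \cap B_{3r_0}(z_0)$, Theorem \ref{thm:bdy-Lip} yields a universal $b > 1$ and a constant $C_+ > 0$ depending only on universal parameters together with $||f^-||_{L^{\infty}(D,\omega_{p_0})}$ and $||u^+||_{L^{\infty}(D)}$, such that
$$u(z) \leq C_+ \, d(z,z_0) \qquad \text{for all } z \in \Omega \cap B_{r_0/b}(z_0).$$

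Next, I apply the same theorem to $\tilde u := -u$, which solves $\L_A \tilde u = -f$ in $D$ and satisfies $\tilde u = 0 \leq 0$ on $B_{3r_0}(z_0) \cap \partial\Omega$. Since $\omega_{p_0} \geq 0$ and $|{-}f| = |f|$, the function $-f$ lies in $L^{\infty}(D,\omega_{p_0})$ with the same weighted norm as $f$; in particular, $||(-f)^-||_{L^{\infty}(D,\omega_{p_0})} = ||f^+||_{L^{\infty}(D,\omega_{p_0})} \leq ||f||_{L^{\infty}(D,\omega_{p_0})}$, and analogously $||\tilde u^+||_{L^{\infty}(D)} = ||u^-||_{L^{\infty}(D)} \leq ||u||_{L^{\infty}(D)}$. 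Theorem \ref{thm:bdy-Lip}, applied to $\tilde u$ with the same geometric data $(r_0, p_0)$, produces the same universal $b$ and some constant $C_- > 0$ such that
$$-u(z) = \tilde u(z) \leq C_- \, d(z,z_0) \qquad \text{for all } z \in \Omega \cap B_{r_0/b}(z_0).$$

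Setting $C := \max(C_+, C_-)$ and combining the two inequalities yields the two-sided estimate $|u(z)| \leq C \, d(z,z_0)$ claimed in the corollary, valid on $\Omega \cap B_{r_0/b}(z_0)$. There is no real obstacle to overcome: the statement is a symmetric consequence of Theorem \ref{thm:bdy-Lip}, made possible by the fact that both the Dirichlet condition $u = 0$ and the weighted class $L^{\infty}(D,\omega_{p_0})$ are invariant under the sign change $(u,f) \mapsto (-u,-f)$, so that the one-sided barrier construction carried out in the proof of Theorem \ref{thm:bdy-Lip} can be run from below as well as from above with identical structural constants.
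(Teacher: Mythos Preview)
Your proof is correct and follows exactly the approach taken in the paper, which simply states that it suffices to apply Theorem~\ref{thm:bdy-Lip} to both $u$ and $-u$. Your additional verification that the sign change $(u,f)\mapsto(-u,-f)$ preserves the hypotheses (in particular the weighted $L^\infty$ condition on the source term) is a welcome elaboration of the one-line argument in the paper.
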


\begin{proof}
It suffices to apply Theorem \ref{thm:bdy-Lip} to both $u$ and $-u$.
\end{proof}

Let us note that the source term $f$ in Theorem \ref{thm:bdy-Lip} and Corollary \ref{cor5} is assumed to belong in a weighted $L^{\infty}$ space for a weight that depends on the point $p_0$ appearing in Definition \ref{extball}. When $\partial \O$ is $C^1$-smooth, the vector $\nabla_X\rho (p_0^{-1}\circ z_0)$ is the projection on to the horizontal distribution of the normal vector to $\partial \O$ at $z_0$. If $\nabla_X\rho (p_0^{-1}\circ z_0) = 0$, then the point $z_0$ is said to be \emph{characteristic} for $\partial \O$. In this case, the weighted space $L^{\infty}(\O\cap B_{3r_0}(z_0),\omega_{p_0})$ is strictly smaller than $L^{\infty}(\O\cap B_{3r_0}(z_0))$. On the other hand, if $\nabla_X\rho (p_0^{-1}\circ z_0)\neq 0$ and $r_0$ is small enough, the weighted space coincides with the usual $L^\infty$ space. 

In general, the estimate provided by Corollary \ref{cor5} is a ``punctual'' one, as it depends on the weighted norm $||f||_{L^{\infty}(\O\cap B_{3r_0}(z_0),\omega_{p_0})}$. If, instead, we wish to obtain a uniform $d$-Lipschitz estimate in some region $\Gamma \subset \partial \O$, it is not clear if any non-trivial function $f$ will satisfy the requirements imposed by Theorem \ref{thm:bdy-Lip} at every point on $\Gamma$. This illustrates how the boundary regularity theory for degenerate equations at characteristic points can differ drastically from that of uniformly elliptic equations.

\section{Improved Regularity Results in a Characteristic Half-Space}\label{sec:halfspace}

The results of the previous section left the issue of obtaining uniform derivative estimates on a portion of the boundary in questionable status. Specifically, it was unclear what weighted space the inhomogeneous term $f$ should belong to. In this final section, we will resolve this issue in the special case of the half-space
$$\H^n_+:=\{(x,t) \in \H^n : t > 0 \}.$$ 
Note that $\H^n_+$ satisfies the exterior touching ball condition (Definition \ref{extball}), has only a single characteristic boundary point at the origin, and is invariant with respect to intrinsic dilations, making it a model domain for studying boundary derivative estimates near characteristic points. Moreover, any other hyperplane in $\H^n$ passing through the origin is non-characteristic in a neighborhood of the origin. 

We will study the problem
\begin{equation}\label{half-space-BVP}
\begin{cases}
\Lop_A u = f & \quad \text{in } B_r(0)\cap \H^n_+, \\
u= 0 & \quad \text{on }  B_r(0)\cap\{t=0\}.
\end{cases}
\end{equation}
Our goal is to prove estimates for $\partial_t u(0,0)$ depending only on supremum-type bounds of $u$ and $f$. The main results are a growth estimate of order $t$ near the origin, Theorem \ref{primadinocond}, and a second order asymptotic expansion, Theorem \ref{ScaleInvariantHolderEstimateNormalDerivative}. These results are new even in the case $A(\cdot)=\mathbb{I}_{2n}$, i.e. for the sub-Laplacian $\Lop_A=\Delta_X$.

\subsection{Linear-in-$t$ growth near the boundary}\label{subsec-linear-growth}
In this subsection, we begin to sharpen the analysis concerning the Lipschitz estimates of Section \ref{sec:lipschitz} and establish $L^\infty$-bounds for the ratio $\frac{u}{t}$. 

First, notice that since $\Omega=\H^n_+$ satisfies the exterior ball condition at $z_0=(0,0)$ for every $r_0>0$ with $p_0=(0,-r_0^2)$, we may invoke the estimates in Section \ref{sec:lipschitz} with the weight
\[\omega_{p_0}(z)=\frac{|x|^2}{\sqrt{|x|^4+(t+r_0^2)^2}}\leq \frac{|x|^2}{r_0^2} \quad \text{for all } z\in \H^n_+.
\]
This motivates the use of the weighted space $L^{\infty}(B_{r_0}(0)\cap\H^n_+,|x|^2)$ in this section. 

Next, notice that Theorem \ref{thm:bdy-Lip} yields an estimate of the form $u(x,t)\leq C \left(|x|^4+t^2\right)^{\frac{1}{4}}$ which is sublinear in the variable $t$. In the following lemma, we prove an improvement of this estimate.

\begin{Lemma}\label{lemprimogiro}
Fix $r_0>0$. Suppose $u\in C^2(B_{4r_0}(0)\cap\H^n_+)\cap C(\overline{B_{4r_0}(0)\cap\H^n_+})$ satisfies $\L_A u \geq f$ in $B_{4r_0}(0)\cap\H^n_+$ for $f \in L^{\infty}(B_{4r_0}(0)\cap\H^n_+,|x|^2)$, and $u \leq 0$ on $B_{4r_0}(0) \cap \{t=0\}$. Then
$$u(z) \leq \frac{M_0}{r_0^4} \left(|x|^4+r_0^2 t \right) \quad \text{ for all } \ z \in B_{\frac{4}{3}r_0}(0)\cap \H^n_+$$
with
$$M_0=C_0\max\left\{ ||u^+||_{L^{\infty}(B_{4r_0}(0)\cap\H^n_+)}, 
\frac{r_0^4}{\lambda}||f^-||_{L^{\infty}(B_{4r_0}(0)\cap\H^n_+,|x|^2)}\right\}$$
and $C_0$ a structural constant depending only on $Q, \frac{\Lambda}{\lambda}$.
\end{Lemma}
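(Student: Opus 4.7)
The plan is to construct an explicit barrier adapted to the exterior touching ball at the characteristic point $0\in\partial\H^n_+$, whose center is $p_0=(0,-r_0^2)$ (so that, by the group law, $p_0^{-1}\circ z=(x,t+r_0^2)$ and $\rho(p_0^{-1}\circ z)^4=|x|^4+(t+r_0^2)^2\geq r_0^4$ on $\overline{\H^n_+}$). Motivated by the barrier used in Theorem \ref{thm:bdy-Lip}, I will take
$$w(z):=B\bigl(r_0^{-4\alpha}-\psi_\alpha(p_0^{-1}\circ z)\bigr),$$
where $\alpha\geq 1$ is a structural exponent chosen so that $4\alpha>(Q+1)\tfrac{\Lambda}{\lambda}-3$, and $B>0$ is to be determined. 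Observe that $w\geq 0$ on $\overline{\H^n_+}$.

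First, by the left-invariance of the $X_i$ applied to the inequality \eqref{weightss} of Lemma \ref{subsolutionlemma}, and the elementary bound $|x|^4+(t+r_0^2)^2\leq 545\,r_0^4$ throughout $B_{4r_0}$, we obtain
$$\L_A w(z)\leq -\frac{cB}{r_0^{4\alpha+4}}|x|^2\qquad\mbox{in }B_{4r_0}(0)\cap\H^n_+$$
for some structural $c>0$. Since $f(z)\geq -\|f^-\|_{L^\infty(\cdot,|x|^2)}|x|^2$, this guarantees $\L_Aw\leq f$ once $B\gtrsim r_0^{4\alpha+4}\|f^-\|_{L^\infty(\cdot,|x|^2)}/\lambda$. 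For the boundary comparison: on $\{t=0\}$ we automatically have $w\geq 0\geq u$, and on $\partial B_{4r_0}\cap\overline{\H^n_+}$ a direct minimization (monotonicity in $t$) gives $\min w = Br_0^{-4\alpha}(1-257^{-\alpha})$, so it suffices that $B\gtrsim r_0^{4\alpha}\|u^+\|_{L^\infty(B_{4r_0}\cap\H^n_+)}$. Both requirements are met by the structural choice
$$B:=C_1\,r_0^{4\alpha}\max\Big\{\|u^+\|_{L^\infty(B_{4r_0}(0)\cap\H^n_+)},\,\tfrac{r_0^4\|f^-\|_{L^\infty(B_{4r_0}(0)\cap\H^n_+,|x|^2)}}{\lambda}\Big\},$$
and the weak comparison principle then yields $u\leq w$ in $B_{4r_0}(0)\cap\H^n_+$.

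The decisive step is to upgrade this barrier bound into the claimed linear-in-$t$ growth inside $B_{4r_0/3}$. Writing $R^4:=|x|^4+(t+r_0^2)^2$ and $s:=(r_0/R)^4\in(0,1]$, we have $w=Br_0^{-4\alpha}(1-s^\alpha)$; since $\alpha\geq 1$, the mean value theorem gives $1-s^\alpha\leq \alpha(1-s)$, whence
$$w\leq \frac{B\alpha}{r_0^{4\alpha+4}}(R^4-r_0^4)=\frac{B\alpha}{r_0^{4\alpha+4}}\bigl(|x|^4+t^2+2r_0^2 t\bigr).$$
In $B_{4r_0/3}$ we have $t<(4r_0/3)^2<2r_0^2$, so $t^2\leq 2r_0^2 t$ and therefore $w\leq 4\alpha B\,r_0^{-4\alpha-4}(|x|^4+r_0^2 t)$. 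Substituting the value of $B$ gives the estimate with $C_0:=4\alpha C_1$.

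The hard part is the design of the barrier itself. The single function $w$ must simultaneously absorb $f^-$ through the $|x|^2$-weighted lower bound on $\L_A\psi_\alpha$ (which is precisely what singles out $L^\infty(\cdot,|x|^2)$ as the natural class for $f$), dominate $u$ on the non-characteristic portion of $\partial B_{4r_0}$, and admit an expansion of the sharp shape $|x|^4+r_0^2 t$. The improvement over the $d$-Lipschitz bound of Corollary \ref{cor5} (which would give only $\sqrt{t}$-growth) is available precisely because $p_0$ lies on the characteristic axis above $0$, so $R^4-r_0^4=|x|^4+t^2+2r_0^2 t$ contains a genuine linear-in-$t$ term; this geometric input is what converts the crude $\rho$-Lipschitz barrier into a linear-in-$t$ one.
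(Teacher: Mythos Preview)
Your proof is correct and follows essentially the same strategy as the paper: build a barrier of the form $w=B\bigl(r_0^{-4\alpha}-\psi_\alpha(p_0^{-1}\circ z)\bigr)$ centered at a point $p_0$ on the negative $t$-axis, compare with $u$ via the weak maximum principle, and then Taylor-expand $w$ to extract the shape $|x|^4+r_0^2t$. The paper makes slightly different cosmetic choices --- it takes $r=\tfrac{4}{3}r_0$, $p_0=(0,-r^2)$, runs the comparison on $B_{2r}(p_0)\cap\H^n_+$ rather than $B_{4r_0}(0)\cap\H^n_+$, and fixes the specific exponent $\alpha=\tfrac14\bigl((Q+1)\tfrac{\Lambda}{\lambda}-2\bigr)$ --- but the mechanism is identical.

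One small remark: your restriction $\alpha\geq 1$ is an artifact of the inequality $1-s^\alpha\leq\alpha(1-s)$, which indeed fails for $0<\alpha<1$. The paper avoids this by writing $w$ in terms of $\sigma=R^4/r_0^4-1$ and using the convexity of $(1+\sigma)^{-\alpha}$ (valid for all $\alpha>0$) to get $1-(1+\sigma)^{-\alpha}\leq\alpha\sigma$ directly. This is slightly cleaner, but your workaround of simply taking $\alpha$ large is perfectly legitimate since only structural dependence is claimed.
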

\begin{proof}
We follow the lines of the proof of  Theorem \ref{thm:bdy-Lip} by means of a one-point barrier function. Fix $r=\frac{4}{3}r_0$, and keep in mind that the ball centered at $p_0=(0,-r^2)$ with radius $r$ is an exterior ball for $\H^n_+$ which touches the plane $\{t=0\}$ at $0$. Denoting $D=B_{2r}(p_0)\cap \H^n_+$, we can fix
$$
\a=\frac{1}{4}\left( (Q+1)\frac{\Lambda}{\lambda} - 2\right),
$$
and we define as in \eqref{barrLip} the function
$$
w(z)=M \frac{r^{-4\a}-\psi_\a(p_0^{-1}\circ z)}{r^{-4\a}-(2 r)^{-4\a}}
$$
with 
$$
M=\max\left\{ ||u^+||_{L^{\infty}(B_{4r_0}(0)\cap\H^n_+)}, 
\frac{(2^{4\a}-1)4r^4||f^-||_{L^{\infty}(B_{4r_0}(0)\cap\H^n_+,|x|^2)}}{4\a\lambda}\right\}
$$
As in Theorem \ref{thm:bdy-Lip}, for any $z=(x,t)\in B_{3r}(0)$ with $t>0$ we obtain 
\begin{align*}
\L_A w (z) &\leq \frac{-4\lambda \a M}{r^{-4\a}-(2 r)^{-4\a}} \frac{ 4\a + 3 - (Q+1)\frac{\Lambda}{\lambda}}{(2 r)^{4\a+2}}\frac{|x|^2}{\sqrt{|x|^4+(t+r^2)^2}}\\
&=\frac{-\left( (Q+1)\frac{\Lambda}{\lambda} - 2\right)\lambda M}{2^{\left( (Q+1)\frac{\Lambda}{\lambda} - 2\right)}-1} \frac{|x|^2}{4r^2\sqrt{|x|^4+(t+r^2)^2}}\\
&\leq \frac{-\left( (Q+1)\frac{\Lambda}{\lambda} - 2\right)\lambda M}{2^{\left( (Q+1)\frac{\Lambda}{\lambda} - 2\right)}-1} \frac{|x|^2}{4r^4} \\
& \leq - ||f^-||_{L^{\infty}(B_{4r_0}(0)\cap\H^n_+,|x|^2)} |x|^2\\
&\leq -f^-(z)\leq f(z).
\end{align*}
Hence, $\L_A w \leq \L_A u$ in $D$. Since $w=M\geq u$ on $\partial B_{2r}(p_0)\cap \H^n_+$ and $w\geq 0\geq u$ on $\{t=0\}$, the comparison principle implies $u\leq w$ in $D$. In particular,
$$
u(x,t)\leq M \frac{1-\left(1+\frac{|x|^4+t^2+2tr^2}{r^4}\right)^{-\alpha}}{1-2^{-4\a}}\qquad\mbox{ for any }(x,t)\in B_r(0)\cap \H^n_+.
$$
We can then exploit the convexity of the function $\sigma\mapsto (1+\sigma)^{-\alpha}$ (which implies $1-(1+\sigma)^{-\alpha}\leq \alpha\sigma$ for any $\sigma>-1$) in order to infer that
\begin{align*}
u(x,t)&\leq \frac{\alpha M}{1-2^{-4\a}}\frac{|x|^4+t^2+2tr^2}{r^4} \\
%& = \frac{\alpha M}{1-2^{-4\a}}\frac{|x|^4+t\left(2r^2+t\right)}{r^4} \\
&\leq \frac{\alpha M}{1-2^{-4\a}}\frac{|x|^4+t\left(3r^2\right)}{r^4} \\
%& = \frac{4\alpha M}{1-2^{-4\a}} \frac{3^4}{4^5}\frac{|x|^4+\frac{16}{3}r_0^2 t}{r_0^4}\\
&\leq \frac{4\alpha M}{1-2^{-4\a}} \frac{27}{64}\frac{|x|^4+r_0^2 t}{r_0^4}
\end{align*}
for all $(x,t)\in B_r(0)\cap \H^n_+$. This finishes the proof if we denote
\[M_0 = \frac{4\alpha M}{1-2^{-4\a}} \frac{27}{64} \quad \text{and} \quad C_0=\max\left\{ \frac{4\alpha}{1-2^{-4\a}} \frac{27}{64}, 
\frac{2^{4\a}}{3}\right\}.\]
\end{proof}

The previous lemma yields, in particular, a ``linear-in-$t$'' estimate in the subregion $\{|x|^4\leq c t\}$. In order to obtain such an estimate in the full neighborhood $B_{r_0}(0)\cap \H^n_+$ of the boundary point, we are going to exploit the fact that $(0,0)$ is an isolated characteristic point for $\H^n_+$. In the following theorem, we prove the desired estimate by constructing barrier functions at every boundary point.

\begin{Theorem}\label{primadinocond}
Fix $r_0>0$. Suppose $u\in C^2(B_{4r_0}(0)\cap\H^n_+)\cap C(\overline{B_{4r_0}(0)\cap\H^n_+})$ satisfies $\L_A u \geq f$ in $B_{4r_0}(0)\cap\H^n_+$ for $f \in L^{\infty}(B_{4r_0}(0)\cap\H^n_+,|x|^2)$, and $u\leq 0$ on $B_{4r_0}(0) \cap \{t=0\}$. Then we have
\begin{equation}\label{linear-growth-estimate}
u(z)\leq \frac{M_1}{r^2_0} t \qquad \text{ for all } \ z=(x,t) \in B_{r_0}(0)\cap \H^n_+
\end{equation}
with
$$M_1=C_1\max\left\{ ||u^+||_{L^{\infty}(B_{4r_0}(0)\cap\H^n_+)}, 
\frac{r_0^4}{\lambda}||f^-||_{L^{\infty}(B_{4r_0}(0)\cap\H^n_+,|x|^2)}\right\}$$
and $C_1>0$ a structural constant.
\end{Theorem}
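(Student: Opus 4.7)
The plan is to build on Lemma \ref{lemprimogiro}, which already delivers $u(x,t) \le \frac{M_0}{r_0^4}(|x|^4 + r_0^2 t)$ throughout $B_{4r_0/3}(0)\cap\H^n_+$ and hence the desired linear-in-$t$ estimate $u(x,t) \le \frac{2M_0}{r_0^2}t$ on the ``characteristic cone'' $\{|x|^4 \le r_0^2 t\}$. The remaining task is therefore to extend this bound to the complementary region $\{|x|^4 > r_0^2 t\}$, which clusters near the non-characteristic part of $\partial\H^n_+$.

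As announced in the discussion preceding the statement, I would construct, for every non-characteristic boundary point $(\bar x,0)\in B_{r_0}(0)\cap\{t=0\}$ (so $\bar x\ne 0$), a local barrier $w_{\bar x}$ of the same one-point type used in Lemma \ref{lemprimogiro} but centered at a carefully placed $p_0=(\bar x,-A_0)$ below $(\bar x,0)$. Since the non-commutativity of $\H^n$ prevents any $d$-ball from being exterior-touching at non-characteristic points, the best one can do is pick $A_0$ equal to (or slightly above) $r^2+2r|\bar x|$; a direct computation using the group law and $|\langle \J\bar x,y\rangle|\le|\bar x||y|$ shows this is the minimal depth guaranteeing $B_r(p_0)\subset\{t<0\}$. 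The barrier then takes the form
\[
w_{\bar x}(y,s) := \frac{\mathcal M}{r^{-4\alpha}-(2r)^{-4\alpha}}\bigl(r^{-4\alpha}-\psi_\alpha(p_0^{-1}\circ (y,s))\bigr)
\]
with $\alpha=\tfrac{1}{4}\bigl((Q+1)\Lambda/\lambda-2\bigr)$; the supersolution property $\L_A w_{\bar x}\le f$ in $B_{2r}(p_0)\cap \H^n_+$ follows from Lemma \ref{subsolutionlemma}, the containment $B_r(p_0)\subset\{t<0\}$ guarantees $w_{\bar x}\ge 0\ge u$ on the flat piece of the boundary, and Lemma \ref{lemprimogiro} supplies the $L^\infty$ bound that fixes $\mathcal M$ on $\partial B_{2r}(p_0)\cap\overline{\H^n_+}$.

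The hard part will be extracting a genuine linear-in-$s$ bound at the point $(\bar x,s)$: because $B_r(p_0)$ does not touch $(\bar x,0)$, a Taylor expansion of $w_{\bar x}(\bar x,s)$ about $s=0$ yields $\frac{\mathcal M}{1-2^{-4\alpha}}\bigl[1-(1+2|\bar x|/r+s/r^2)^{-2\alpha}\bigr]$, which has a nonzero constant offset of order $\mathcal M|\bar x|/r$ in addition to the wanted linear-in-$s$ term of order $\mathcal M/r^2$. To handle this, I would calibrate $r\sim r_0$ and the choice of $\mathcal M$ so that the constant offset is dominated by the quartic bound already provided by Lemma \ref{lemprimogiro} at the same point, and the linear coefficient has structural size $\mathcal M/r_0^2$. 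An alternative is an integrated barrier of the $U_E$ type from Section \ref{sec:growthlemma}, with $E$ shaped as a thin region below $(\bar x,0)$ designed so that, after averaging, the barrier effectively touches $\partial\H^n_+$ at $(\bar x,0)$ and cancels the unwanted offset.

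Gluing the estimate on the characteristic cone (from Lemma \ref{lemprimogiro}) with the local linear-in-$s$ estimates coming from the barriers $w_{\bar x}$ over all non-characteristic boundary points yields the global bound $u(z)\le M_1 t/r_0^2$ in $B_{r_0}(0)\cap \H^n_+$, with $M_1$ of the structural form stated in \eqref{linear-growth-estimate}.
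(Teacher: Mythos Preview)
Your overall decomposition is correct and matches the paper: Lemma \ref{lemprimogiro} already gives the linear-in-$t$ bound on the region $\{|x|^2\le t\}$ (and in particular along the $t$-axis), so the only remaining work is at non-characteristic boundary points $(\bar x,0)$ with $0<|\bar x|<r_0$, in the thin strip $0<t<|\bar x|^2$.

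However, your proposed barrier for that strip has a genuine gap that you have identified but not resolved. You correctly observe that no $d$-ball centered at $p_0=(\bar x,-A_0)$ with $A_0\ge r^2+2r|\bar x|$ can touch $\{t=0\}$ at $(\bar x,0)$, since $d((\bar x,0),p_0)=\sqrt{A_0}>r$. Consequently $w_{\bar x}(\bar x,0)>0$, and the expansion you wrote down gives $w_{\bar x}(\bar x,s)\gtrsim \mathcal M\,|\bar x|/r$ uniformly as $s\to 0^+$. Since you need $u(\bar x,s)\le C\,s/r_0^2$ for \emph{all} $0<s<|\bar x|^2$, this constant offset is fatal: no choice of $r\sim r_0$ makes $\mathcal M|\bar x|/r_0$ smaller than $Cs/r_0^2$ for small $s$, and scaling $r\sim|\bar x|$ does not help either because the offset then becomes of structural (order one) size. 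Your suggestion to absorb the offset into the quartic bound from Lemma \ref{lemprimogiro} cannot work for the same reason: that bound at $(\bar x,s)$ is $\sim M_0|\bar x|^4/r_0^4$, which is much smaller than $M_0|\bar x|/r_0$ for small $|\bar x|$. The alternative of using an integrated barrier $U_E$ is also off-track here, since $U_E$ is a subsolution only under \eqref{CordesLandis}, whereas Theorem \ref{primadinocond} is stated with no restriction on the ellipticity ratio.

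The paper's fix is to abandon the $\psi_\alpha$-type barrier entirely near non-characteristic points and instead use, on the small domain
\[
D_{x_0}=\Bigl\{(x,t):\ |x-x_0|<\tfrac{1}{10}|x_0|,\ 0<t<|x|^2\Bigr\},
\]
the exponential barrier
\[
w(z)=M\,\frac{|x_0|^2}{r_0^2}\Bigl(1-e^{-\alpha\,(t+|x-x_0|^2)/|x_0|^2}\Bigr),\qquad \alpha=\tfrac{25n}{8}\tfrac{\Lambda}{\lambda}.
\]
This function vanishes \emph{exactly} at $(x_0,0)$, which eliminates the offset problem; its key supersolution property comes from the elementary inequality $|\J x+(x-x_0)|\ge |x|-|x-x_0|\ge\tfrac{8}{10}|x_0|$ on $D_{x_0}$, so that in the computation of $\L_A w$ the negative second-order term $-\tfrac{2\alpha\lambda}{|x_0|^2}|\J x+(x-x_0)|^2$ dominates $\mathrm{tr}\,A\le 2n\Lambda$. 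Lemma \ref{lemprimogiro} is used on $\partial D_{x_0}\cap\H^n_+$ (where $t\sim|x_0|^2$) to set the value of $M$. Finally the convexity inequality $1-e^{-\alpha\sigma}\le\alpha\sigma$ turns the bound $u(x_0,s)\le w(x_0,s)$ into the desired $u(x_0,s)\le \alpha M s/r_0^2$.
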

\begin{proof}
We start by noticing a direct consequence of Lemma \ref{lemprimogiro}:
\begin{equation}\label{atzero}
u(0,t)\leq \frac{M_0}{r^2_0} t \quad \text{ for all } 0<t<r_0^2,
\end{equation}
where $M_0$ is the positive constant in Lemma \ref{lemprimogiro}. This proves \eqref{linear-growth-estimate} at $x = 0$, so we need to prove the estimate at each $x_0\in\R^{2n}$ with $0<|x_0|<r_0$ . Again by Lemma \ref{lemprimogiro} we have
\begin{equation}\label{atx0sopra}
u(x_0,t)\leq \frac{M_0}{r^4_0} \left(t^2+r_0^2t \right )\leq \frac{2 M_0}{r^2_0} t \qquad \mbox{ for any } t \mbox{ such that } |x_0|^2\leq t<\sqrt{r_0^4 - |x_0|^4}.
\end{equation}
The previous upper bound (which is effective if $2|x_0|^4<r_0^4$) does not say anything about the range $t\in (0,\min\left\{|x_0|^2, \sqrt{r_0^4 - |x_0|^4}\right\})$. With this in mind, let us now denote
$$
D_{x_0}=\left\{(x,t)\in\H^n_+ \,:\, |x-x_0|< \frac{1}{10}|x_0|,\, 0<t< |x|^2\right\}.$$
We will apply a barrier argument to prove the bound \eqref{linear-growth-estimate} in $D_{x_0}$.

First, notice that, for any $(x,t)\in \overline{D_{x_0}}$ we have 
\begin{equation}\label{comparax}
\frac{9}{10}|x_0|\leq |x_0|-|x-x_0|\leq |x| \leq |x|+|x-x_0|\leq \frac{11}{10}|x_0|.
\end{equation}
In particular, using that $|x|<\frac{11}{10}r_0$, we recognize that $\overline{D_{x_0}}\subseteq B_{\frac{4}{3}r_0}(0)$. Hence, we can use Lemma \ref{lemprimogiro} once more to infer
\begin{equation}\label{ustasotto}
u(x,t)\leq \frac{M_0}{r_0^4}|x|^2(|x|^2+r_0^2)\leq \frac{\tilde{M}_0}{r_0^2}|x_0|^2 \qquad\mbox{ for any }(x,t)\in \overline{D_{x_0}}
\end{equation}
where $\tilde{M}_0=\frac{11^2(11^2+10^2)}{10^4} M_0$. Letting
$$
\alpha=\frac{25n}{8}\frac{\Lambda}{\lambda}
\quad\mbox{ and }\quad M=\max\left\{\frac{\tilde{M}_0}{1- e^{\frac{-\alpha}{100}}}, \frac{e^{\frac{61\alpha}{50}}}{4n\alpha\Lambda} r^2_0 ||f^-||_{L^\infty(D_{x_0})} \right\},%\frac{2\lambda}{25n^2\Lambda^2}{}e^{\frac{61n\Lambda}{16\lambda}}
$$
we can consider the function
$$
w(z)=M \frac{|x_0|^2}{r^2_0} \left( 1- e^{-\alpha\frac{t+|x-x_0|^2}{|x_0|^2}}\right).
$$
We remark that, by \eqref{comparax}, we have
$$
\begin{cases}
w(x,t) \geq M \frac{|x_0|^2}{r^2_0} \left( 1- e^{\frac{-\alpha}{100}}\right) \qquad\,\,\,\mbox{ for } |x-x_0|=\frac{1}{10}|x_0|\mbox{ and }t\geq 0  \\
w(x,t)\geq M \frac{|x_0|^2}{r^2_0} \left( 1- e^{\frac{-81\alpha}{100}}\right) \qquad\mbox{ for } |x-x_0|\leq \frac{1}{10}|x_0|\mbox{ and }t=|x|^2.
\end{cases}
$$
Therefore,
$$
w(z)\geq \tilde{M}_0 \frac{|x_0|^2}{r^2_0}\qquad\mbox{ for any }z\in \partial D_{x_0}\cap \H^n_+.
$$
If we keep in mind \eqref{ustasotto} and the fact that $u\leq 0\leq w$ on $\{t=0\}$, the previous lower bound implies
\begin{equation}\label{ustasottoalbordo}
u\leq w\quad\mbox{ on }\partial D_{x_0}
\end{equation}
On the other hand, for any $z\in D_{x_0}$, we have
\begin{align*}
\L_A w (z)&= 
\frac{2\alpha M}{r_0^2}e^{-\alpha\frac{t+|x-x_0|^2}{|x_0|^2}}\left( \text{tr}(A(z)) -\frac{2\alpha}{|x_0|^2}\langle A(z)\left(\J x + (x-x_0)\right),\left(\J x + (x-x_0)\right) \rangle\right) \\
&\leq \frac{4\alpha M}{r_0^2}e^{-\alpha\frac{t+|x-x_0|^2}{|x_0|^2}}\left( n\Lambda -\frac{\alpha\lambda}{|x_0|^2}\left|\J x + (x-x_0)\right|^2 \right) \\
& \leq \frac{4\alpha M}{r_0^2}e^{-\alpha\frac{t+|x-x_0|^2}{|x_0|^2}}\left( n\Lambda -\frac{\alpha\lambda}{|x_0|^2}\left( |\J x| - |x-x_0|\right)^2 \right)\\
&\leq \frac{4\alpha M}{r_0^2}e^{-\alpha\frac{t+|x-x_0|^2}{|x_0|^2}}\left( n\Lambda -\frac{\alpha\lambda}{|x_0|^2}\left( \frac{9}{10}|x_0| -  \frac{1}{10}|x_0|\right)^2 \right)\\
& \leq \frac{-4n\alpha\Lambda M}{r_0^2}e^{-\alpha\frac{|x|^2+\frac{1}{100}|x_0|^2}{|x_0|^2}} \\
& \leq \frac{-4n\alpha\Lambda M}{r_0^2}e^{-\alpha\frac{61}{50}},
\end{align*}
where we used that $A(z) \in M_n(\lambda, \Lambda)$, together with the definition of $\alpha$ and \eqref{comparax}. Recalling the definition of $M$, we thus have
$$
\L_A w (z)\leq -||f^-||_{L^\infty(D_{x_0})} \leq -f^-(z)\leq f(z)\leq \L_A u (z)
 \quad \text{for all } z \in D_{x_0}.$$
Thanks also to \eqref{ustasottoalbordo}, we are then able to apply the comparison principle which yields
$$
u\leq w \quad \mbox{ in }D_{x_0}.
$$
In particular,
$$
u(x_0,t)\leq M \frac{|x_0|^2}{r_0^2}\left(1-e^{-\alpha\frac{t}{|x_0|^2}}\right)\qquad\mbox{ for any }t\mbox{ such that }0<t<|x_0|^2.
$$
The convexity of the function $\sigma\mapsto e^{-\alpha\sigma}$ (which yields $1-e^{-\alpha\sigma}\leq \alpha\sigma$ for any $\sigma\in\R$) implies
\begin{equation}\label{miracle}
u(x_0,t)\leq \frac{\alpha M}{r_0^2} t\qquad\mbox{ for any }t\mbox{ such that }0<t<|x_0|^2.
\end{equation}
The combination of \eqref{atzero}-\eqref{atx0sopra}-\eqref{miracle} yields
$$
u(z)\leq \frac{M_1}{r^2_0} t \qquad \text{ for all } \ z \in B_{r_0}(0)\cap \H^n_+
$$
if we choose
$$M_1\geq\max\left\{ 2M_0, \alpha M \right\}.$$
From a brief check of the dependence of the explicit constants and from the inequality $ ||f^-||_{L^\infty(D_{x_0})}\leq ||f^-||_{L^\infty(B_{4/3\, r_0}(0))}\leq 2 r^2_0 ||f^-||_{L^{\infty}(B_{4r_0}(0)\cap\H^n_+,|x|^2)}$, we readily realize that we can take
$$M_1=C_1\max\left\{ ||u^+||_{L^{\infty}(B_{4r_0}(0)\cap\H^n_+)}, 
\frac{r_0^4}{\lambda}||f^-||_{L^{\infty}(B_{4r_0}(0)\cap\H^n_+,|x|^2)}\right\}$$
with the choice 
$$C_1=\max\left\{ 2C_0, \frac{11^2(11^2+10^2)}{10^4} \frac{\alpha C_0}{1- e^{\frac{-\alpha}{100}}}, \frac{e^{\frac{61\alpha}{50}}}{(Q-2)\frac{\Lambda}{\lambda}} \right\}$$
where $C_0$ is the structural constant coming from Lemma \ref{lemprimogiro} and $\alpha=\frac{25}{16}(Q-2)\frac{\Lambda}{\lambda}$ as fixed above.
\end{proof}

A direct application of the previous theorem to both $u$ and $-u$ yields the following weighted $L^{\infty}$ estimate for solutions that vanish on the $\{t=0\}$ plane.

\begin{Corollary}\label{nocond}
Fix $r_0>0$. Suppose that $u\in C^2(B_{4r_0}(0)\cap\H^n_+)\cap C(\overline{B_{4r_0}(0)\cap\H^n_+})$ solves
$$\begin{cases}
\L_A u = f & \quad \text{in } B_{4r_0}(0)\cap\H^n_+, \\
u= 0 & \quad \text{on }  B_{4r_0}(0) \cap \{t=0\},
\end{cases}$$
for some $f \in L^{\infty}(B_{4r_0}(0)\cap\H^n_+,|x|^2)$. Then $\frac{u}{t}$ is bounded in $B_{r_0}(0)\cap \H^n_+$, i.e. $u\in L^\infty(B_{r_0}(0)\cap \H^n_+, t)$. In particular, we there exists a positive constant $C$ (depending only on $Q$ and $\frac{\Lambda}{\lambda}$) such that
$$
||u||_{L^{\infty}(B_{r_0}(0)\cap\H^n_+,t)} \leq \frac{C}{r_0^2} \left( ||u||_{L^{\infty}(B_{4r_0}(0)\cap\H^n_+)} + \frac{r_0^4}{\lambda}  ||f||_{L^{\infty}(B_{4r_0}(0)\cap\H^n_+,|x|^2)} \right).
$$
\end{Corollary}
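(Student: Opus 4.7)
The plan is to derive Corollary \ref{nocond} as a straightforward consequence of Theorem \ref{primadinocond} by applying the theorem to both $u$ and $-u$. Since the corollary provides a two-sided bound on $u/t$ while Theorem \ref{primadinocond} only yields a one-sided bound on subsolutions, I first need to verify that $-u$ also satisfies the hypotheses of that theorem.

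First, I would set $v=-u$ and note that $v\in C^2(B_{4r_0}(0)\cap\H^n_+)\cap C(\overline{B_{4r_0}(0)\cap\H^n_+})$ with $v=0$ on $B_{4r_0}(0)\cap\{t=0\}$ and $\L_A v = -f$ in $B_{4r_0}(0)\cap\H^n_+$. Since $|{-f(z)}|=|f(z)|$, we have $-f\in L^{\infty}(B_{4r_0}(0)\cap\H^n_+,|x|^2)$ with the same norm as $f$. The equality $\L_A v=-f$ trivially implies $\L_A v\geq -f$, and likewise $\L_A u\geq f$. Applying Theorem \ref{primadinocond} to $u$ yields $u(z)\leq M_1 t/r_0^2$ in $B_{r_0}(0)\cap\H^n_+$, while applying it to $v$ yields $-u(z)\leq \widetilde{M}_1 t/r_0^2$ there.

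Next, I would observe that both constants $M_1$ and $\widetilde{M}_1$ can be uniformly bounded by
\[
C_1\max\left\{\|u\|_{L^{\infty}(B_{4r_0}(0)\cap\H^n_+)},\;\frac{r_0^4}{\lambda}\|f\|_{L^{\infty}(B_{4r_0}(0)\cap\H^n_+,|x|^2)}\right\},
\]
since $\|u^+\|_{L^\infty},\|u^-\|_{L^\infty}\leq \|u\|_{L^\infty}$ and analogously for $f^+, f^-$. Combining the two one-sided inequalities yields $|u(z)|\leq C_1 t/r_0^2$ times the quantity above for all $z=(x,t)\in B_{r_0}(0)\cap\H^n_+$. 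Bounding the maximum by the sum then gives the desired weighted estimate with constant $C=C_1$, which depends only on $Q$ and $\Lambda/\lambda$ since $C_1$ does by the statement of Theorem \ref{primadinocond}.

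I do not anticipate any substantive obstacles here, as the statement is essentially a symmetrized reformulation of Theorem \ref{primadinocond}. The only minor bookkeeping step is to make sure that the weighted $L^\infty$ norm is preserved under $f\mapsto -f$ and that the constant $C_1$ is indeed structural, both of which are immediate from the definitions.
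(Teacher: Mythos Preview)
Your proposal is correct and matches the paper's own approach exactly: the paper states that Corollary \ref{nocond} follows by a direct application of Theorem \ref{primadinocond} to both $u$ and $-u$, which is precisely what you do. The bookkeeping you outline (bounding $\|u^\pm\|_{L^\infty}$ by $\|u\|_{L^\infty}$, $\|f^\pm\|_{L^\infty(\cdot,|x|^2)}$ by $\|f\|_{L^\infty(\cdot,|x|^2)}$, and replacing the maximum by the sum) is exactly what is needed.
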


\subsection{Precise Asymptotic Expansion Near the Origin}\label{LiptoC1alpha}

The results in the previous subsection show that for solutions of \eqref{half-space-BVP}, we have an estimate of the form $|u(x,t)| \leq Ct$, provided $f$ belongs to an appropriate weighted $L^{\infty}$ space; note that this is already a marked improvement from the results of Section \ref{sec:lipschitz}. In this subsection, we go further and identify the slope that determines the precise linear growth of the solution away from the $\{t = 0\}$ plane.

Our main result, Theorem \ref{ScaleInvariantHolderEstimateNormalDerivative}, shows that the normal derivative $\partial_t u(0,0)$ is well defined and that $u(x,t) - \partial_t u(0,0) t$ grows like $d^{2+\a}(x,t)$ for some $\a \in(0,1)$. To prove this, we will adapt a strategy for showing H\"older continuity of the normal derivative for solutions of uniformly elliptic second order equations in non-divergence form. Such a result was originally proved by Krylov \cite{Krylov83} and a simpler argument due to Caffarelli is presented in \cite[Chapter IV, Section 3]{KazdanBook}; see also \cite[Theorem 9.31]{GTBook} and \cite[Chapter 1, Section 1.2.3]{HanBook}.

We follow Caffarelli's approach, which entails carrying out a $C^{\a}$-type iteration argument to the function $\frac{u}{t}$. We note that, while this strategy yields a boundary $C^{1,\a}$ estimate in the classical uniformly elliptic case, the end result for solutions of \eqref{half-space-BVP} is actually a \emph{second order} Taylor expansion with respect to the metric $d$ at the origin. This reflects the fact that the $t$ variable is homogeneous of degree 2 in the metric and a H\"older estimate for $\frac{u}{t}$ at the origin translates to a growth rate of $d^{2+\a}$ for the function $u(x,t) - \partial_t u(0,0) t$. We refer to the discussion in the Introduction of this paper. 

The $C^{\a}$-type iteration for the ratio $\frac{u}{t}$ will be carried out over a family of rectangular sets in the half-space $\H^n_+$, which we now define. Fix $\delta=\min\{\frac{1}{2n+2}, \frac{\lambda}{10\Lambda}\}$. For any $r>0$, let
\begin{align*}
\r(r) := &\left\{(x,t)\,:\, 0<t<\delta r^2,\, |x|<r\right\}, \\
\r^+(r) := &\left\{(x,t)\,:\, \frac{\delta r^2}{2} <t<\delta r^2,\, |x|<r\right\}, \\
\Sigma(r) := & \,\, \partial \r(r) \cap \{t = 0\}=\left\{(x,0)\,:\, |x|\leq r\right\}.
\end{align*}
Notice that $\r(r) = \delta_r(\r(1))$.

Our goal is to construct appropriate barriers on these rectangular sets. We begin with a simple lemma.

\begin{Lemma}\label{barriers}
For any $r>0$, define the functions
\begin{align*}
\phi_1(x,t) &:=t\left( \delta + \frac{t}{r^2} -2\delta \frac{|x|^4}{r^4}\right),\\
\phi_2(t)&:=t\left(t - \delta r^2 \right).
\end{align*}
Then for any $\L_A$ with $A(z) \in M_n(\lambda,\Lambda)$, we have
\begin{align*}
\Lop_A \phi_1(x,t)  &\geq \frac{4 \lambda |x|^2}{r^2} \qquad \text{ for all } (x,t) \in \r(r),\\
\Lop_A \phi_2(x,t) & \geq 8 \lambda |x|^2 \qquad\,\,\,\, \text{for all } (x,t) \in \H^n.
\end{align*}
\end{Lemma}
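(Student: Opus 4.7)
The plan is to apply $\L_A$ directly to each summand of the two barriers, relying on three basic computations with the symmetrized horizontal Hessian $X_{ij} = \tfrac12(X_i X_j + X_j X_i)$. Using $X_j = \partial_{x_j} + 2(\J x)_j \partial_t$ and the antisymmetry of $\J$, one first checks:
(i) $X_{ij}(t) = 0$, so $\L_A(t) = 0$;
(ii) $X_{ij}(t^2) = 8(\J x)_i (\J x)_j$, hence $\L_A(t^2) = 8\langle A \J x, \J x\rangle \geq 8\lambda |x|^2$;
(iii) for any $g(x,t) = t\, h(x)$ with $h$ smooth in $x$, a direct product-rule computation gives
\[
X_{ij}(t\, h) = t\, \partial_{x_i x_j} h + 2(\J x)_j \partial_{x_i} h + 2(\J x)_i \partial_{x_j} h,
\]
so that $\L_A(t\, h) = t\, \tr(A\, D^2 h) + 4\langle A \J x, \nabla h\rangle$, where $\nabla h$ and $D^2 h$ denote the usual Euclidean gradient and Hessian of $h$ in the $x$ variables.

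The bound on $\L_A \phi_2 = \L_A(t^2) - \delta r^2\, \L_A(t)$ is then immediate from (i)--(ii). For $\phi_1$, I would apply (iii) with $h(x) = |x|^4$, whose Euclidean gradient is $4|x|^2 x$ and whose Euclidean Hessian has trace $8\langle Ax,x\rangle + 4|x|^2 \tr(A)$ when contracted with $A$. Assembling the three contributions yields
\[
\L_A \phi_1 = \frac{8}{r^2}\langle A\J x, \J x\rangle - \frac{16\delta t}{r^4}\langle Ax, x\rangle - \frac{8\delta t\, |x|^2}{r^4}\tr(A) - \frac{32\delta\, |x|^2}{r^4}\langle A\J x, x\rangle.
\]

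The main (and essentially only) delicate step is to show that the three negative error terms are dominated by half of the good term $8\lambda |x|^2/r^2$ at every point of $\r(r)$. Using the ellipticity bounds $\lambda \I \leq A \leq \Lambda \I$, the pointwise constraints $0 < t < \delta r^2$ and $|x| < r$ available in $\r(r)$, and the crude inequality $|x|^4/r^4 \leq |x|^2/r^2$ to handle the last term, each error is bounded by a constant multiple of $\delta \Lambda |x|^2/r^2$. Requiring the sum to be at most $4\lambda |x|^2/r^2$ reduces to the algebraic condition
\[
4(n+1)\delta^2 \Lambda + 8\delta \Lambda \leq \lambda.
\]
The choice $\delta = \min\{\tfrac{1}{2n+2}, \tfrac{\lambda}{10\Lambda}\}$ fixed just before stating the lemma is tailored precisely to this: the first entry of the min gives $(n+1)\delta \leq 1/2$, which downgrades the quadratic contribution to $\leq 2\delta \Lambda$, and then the second entry yields $10\delta \Lambda \leq \lambda$. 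The obstacle is thus purely algebraic bookkeeping; conceptually, $\phi_1$ is engineered to trade the positive gain coming from $\L_A(t^2)$ against the negative contribution coming from $\L_A(t|x|^4)$, with the smallness of $\delta$ ensuring that the gain still dominates at every point of $\r(r)$.
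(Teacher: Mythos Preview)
Your proposal is correct and follows essentially the same route as the paper. The paper organizes the computation via the product rule $\L_A(fg)=(\L_A f)g+f(\L_A g)+2\langle A\nabla_X f,\nabla_X g\rangle$ rather than your direct formula for $X_{ij}(t\,h(x))$, but both lead to the identical expression for $\L_A\phi_1$; the algebraic check that the choice of $\delta$ kills half the good term is the same in both cases (the paper factors out $\tfrac{8\lambda|x|^2}{r^2}$ and shows the bracket is $\geq \tfrac12$, which is exactly your condition $4(n+1)\delta^2\Lambda+8\delta\Lambda\leq\lambda$).
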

\begin{proof}
By direct computation (see also \eqref{(i)}) we have 
$$
\nabla_X [t]=2 \J x, \qquad \text{and} \qquad \nabla_X[|x|^4]=4 |x|^2 x.
$$
Using again the fact that $\tr(A(x,t) \J)=0$ for any $A(\cdot)$ symmetric, we then obtain
$$
\Lop_A [t]=0,  \qquad \text{and} \qquad\Lop_A\left[|x|^4\right]=4|x|^2\tr(A(x,t))+ 8\left\langle A(x,t)x,x\right\rangle.
$$
Hence, by \eqref{ellipticity},
\begin{align*}
\Lop_A \phi_2(x,t) & = \Lop_A[t] (t - \delta r^2) + t \Lop_A [t-\delta r^2] +2  \left\langle A(x,t) \nabla_X (t), \nabla_X (t-\delta r^2) \right\rangle \\
& = 8 \left\langle A(x,t) Jx, Jx \right\rangle \geq 8\lambda|x|^2
\end{align*}
for any $(x,t)\in \H^n$. Similarly,
\begin{align*}
&\Lop_A \phi_1(x,t) \\
& = \Lop_A [t]\left( \delta + \frac{t}{r^2} -2\delta \frac{|x|^4}{r^4}\right)+ t\Lop_A\left[ \delta + \frac{t}{r^2} -2\delta \frac{|x|^4}{r^4}\right] + 2 \left\langle A(x,t)\nabla_X (t), \nabla_X \left( \delta + \frac{t}{r^2} -2\delta \frac{|x|^4}{r^4}\right)\right\rangle\\
&=\frac{8}{r^2}\left\langle A(x,t)Jx,Jx\right\rangle - \frac{32\delta |x|^2}{r^4}\left\langle A(x,t)Jx,x\right\rangle -\frac{8\delta t}{r^4}\left(|x|^2\tr (A(x,t))+ 2\left\langle A(x,t)x,x\right\rangle\right)\\
&\geq \frac{8\lambda |x|^2}{r^2} - \frac{32\delta \Lambda |x|^4}{r^4}-\frac{16\delta \Lambda t |x|^2}{r^4}(n+1)\\
&=\frac{8\lambda |x|^2}{r^2}\left(1- \frac{4\delta \Lambda |x|^2}{\lambda r^2} - (2n+2)\frac{\delta \Lambda t}{\lambda r^2}\right).
\end{align*}
If $(x,t)\in \r(r)$, then
$$
\Lop_A \phi_1 (x,t)\geq \frac{8\lambda |x|^2}{r^2} \left(1- 4\delta \frac{\Lambda}{\lambda} - (2n+2)\delta^2 \frac{\Lambda }{\lambda}\right).
$$
The desired inequality for $\Lop_A \phi_1$ follows from our choice of $\delta$.
\end{proof} 

We will use the barrier functions from the previous lemma to estimate the infimum of $v =\frac{u}{t}$.

\begin{Proposition}\label{firstestimate}
Fix $r>0$. Suppose $w \in C^2(\r(r))\cap C(\overline{\r(r)})$ is non-negative and satisfies $\Lop_A w \leq f$ in $\r(r)$ for $f \in L^{\infty}(\r(r),|x|^2)$. Then the function $v:= \frac{w}{t}$ satisfies
\begin{equation}\label{firstestimate1}
\inf_{\r^+(r)} v \leq \frac{2}{\delta} \inf_{\r(\frac{r}{2})} v + \frac{r^2}{\lambda}||f^+||_{L^{\infty}(\r(r),|x|^2)}.
\end{equation}
\end{Proposition}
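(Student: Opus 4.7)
The plan is to argue by contradiction using a single-parameter lower barrier built from $\phi_1$. Set $m := \inf_{\r(r/2)} v \geq 0$ and $F := \|f^+\|_{L^\infty(\r(r),|x|^2)}$, and suppose for contradiction that $V := \inf_{\r^+(r)} v > \tfrac{2m}{\delta} + \tfrac{r^2 F}{\lambda}$. Since $w \in C(\overline{\r(r)})$ is non-negative, this assumption forces $w \geq V t$ throughout $\overline{\r^+(r)}$; in particular, the top face $\{t = \delta r^2,\, |x| \leq r\}$ of $\r(r)$ inherits the pointwise lower bound $w(x,\delta r^2) \geq V \delta r^2$, which will be the only substitute for the missing a priori information on $w$ along that face.

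The barrier I will use is $\Phi(x,t) := \tfrac{V}{2\delta}\phi_1(x,t)$. Lemma \ref{barriers} gives $\Lop_A \Phi \geq \tfrac{V}{2\delta}\cdot\tfrac{4\lambda|x|^2}{r^2} = \tfrac{2V\lambda}{\delta r^2}|x|^2$, which is bounded below by $F|x|^2 \geq f(z)$ in $\r(r)$ because the contradiction hypothesis yields $V \geq \tfrac{F\delta r^2}{2\lambda}$. The comparison $\Phi \leq w$ on $\partial \r(r)$ is then checked face by face: on the bottom $\{t=0\}$ one has $\Phi = 0 \leq w$; on the lateral face $\{|x|=r\}$ the explicit formula gives $\phi_1 = t(t/r^2 - \delta) \leq 0$ throughout $[0,\delta r^2]$, so $\Phi \leq 0 \leq w$; and on the top $\{t = \delta r^2\}$, $\Phi = V \delta r^2\bigl(1 - |x|^4/r^4\bigr) \leq V\delta r^2 \leq w$ by the lower bound extracted above. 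The weak comparison principle recorded in Section \ref{sec:preliminaries} then forces $\Phi \leq w$ on all of $\r(r)$.

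To close the loop, I would compute $\Phi/t$ on the open set $\r(r/2)$: the explicit formula $\phi_1/t = \delta + t/r^2 - 2\delta|x|^4/r^4$ together with $|x| < r/2$ and $t > 0$ gives $\Phi/t > \tfrac{7V}{16}$ strictly on $\r(r/2)$. Because $\delta \leq \tfrac{1}{2n+2} \leq \tfrac{7}{8}$, the contradiction hypothesis $V > \tfrac{2m}{\delta}$ upgrades to $m < \tfrac{7V}{16}$, so by the definition of $m$ one can select $(x_*,t_*) \in \r(r/2)$ with $v(x_*,t_*) < \tfrac{7V}{16}$, whence $w(x_*,t_*) < \tfrac{7V}{16}\, t_* < \Phi(x_*,t_*)$, contradicting $\Phi \leq w$. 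The main obstacle throughout is the absence of a priori information about $w$ on the top face of $\r(r)$, and the entire argument is organized precisely so that the contradiction hypothesis supplies just enough control there to make the weak comparison principle applicable; the remaining bookkeeping (checking that $2/\delta$ dominates the barrier constant $16/7$ and $r^2/\lambda$ dominates $\delta r^2/(2\lambda)$) is what matches the particular numerical form of the asserted estimate.
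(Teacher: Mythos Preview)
Your proof is correct. The paper argues directly rather than by contradiction: it sets $m := \inf\{v(x,\delta r^2) : |x|<r\}$ (the infimum of $v$ over the top face), builds the barrier $\beta = m\phi_1 + \tfrac{F}{8\lambda}\phi_2$ using \emph{both} functions from Lemma \ref{barriers}, verifies $\beta \leq w$ by comparison, and then reads off $v \geq \beta/t \geq \tfrac{7m\delta}{8} - \tfrac{F\delta r^2}{8\lambda}$ on $\r(r/2)$; the conclusion follows since the top face lies in $\overline{\r^+(r)}$, so $m \geq \inf_{\r^+(r)} v$. The role of $\phi_2$ is precisely to absorb the source term (via $\Lop_A\phi_2 \geq 8\lambda|x|^2$) independently of the coefficient on $\phi_1$. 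Your contradiction setup lets you dispense with $\phi_2$ entirely: the hypothesis $V > \tfrac{r^2 F}{\lambda}$ forces the coefficient $\tfrac{V}{2\delta}$ in front of $\phi_1$ to be large enough that $\Lop_A\Phi$ already dominates $f$. This is a genuine simplification of the barrier, traded for the indirect framing; the paper's direct route yields marginally sharper constants ($\tfrac{8}{7\delta}$ and $\tfrac{1}{7\lambda}$ in place of $\tfrac{2}{\delta}$ and $\tfrac{1}{\lambda}$), but that plays no role downstream.
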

\begin{proof}
Assume $\inf_{\r(\frac{r}{2})} v<+\infty$ (otherwise there is nothing to prove). Set 
$$m = \inf\{v(x,\delta r^2) : |x| < r \} \qquad \text{and} \qquad F = ||f^+||_{L^{\infty}(\r(r),|x|^2)}.$$
Note that $m \geq 0$, and that $w(x,\delta r^2) = \delta r^2 v(x,\delta r^2) \geq m \delta r^2$ for all $|x| < r$. Let $\phi_1$ and $\phi_2$ be as in Lemma \ref{barriers}, and consider the function
$$\beta(x,t) := m \phi_1(x,t) + \frac{F}{8 \lambda} \phi_2(t) = m \delta t\left(1 + \frac{t}{\delta r^2} -\frac{2|x|^4}{r^4}\right) +  \frac{F t}{8 \lambda} \left(t - \delta r^2 \right). $$
We note that $\beta$ satisfies the following properties:
\begin{enumerate}
\item[(i)] $\beta(x,0) = 0 \leq w(x,0)$ for all $x \in \Sigma(r)$;
\item[(ii)] $\beta(x,t) \leq 0 \leq w(x,t)$ for all $|x| = r$ and $0 < t \leq \delta r^2$;
\item[(iii)] For all $|x|< r$,
\[\beta(x,\delta r^2) \leq 2 m \delta^2 r^2 < m \delta r^2 \leq w(x,\delta r^2),\]
where the second inequality holds because $\delta < \frac{1}{2}$;
\item[(iv)] For any $(x,t) \in \r(r)$, we have by Lemma \ref{barriers}
\begin{align*}
\Lop_A \beta (x,t) & = m \Lop_A \phi_1 (x,t) + \frac{F}{8\lambda} \Lop_A \phi_2 (x,t)\\
& \geq \frac{F}{8\lambda} \Lop_A \phi_2 (x,t) \\
& \geq F |x|^2\\
& \geq f(x,t) \geq \Lop_A w(x,t).
\end{align*}
\end{enumerate}
It follows from the comparison principle that $\beta \leq w$ on $\r(r)$, which we can rewrite as
$$
v(x,t)=\frac{w(x,t)}{t} \geq \frac{\beta(x,t)}{t} = m \delta \left(1 + \frac{t}{\delta r^2} -\frac{2|x|^4}{r^4}\right) +  \frac{F}{8 \lambda} \left(t - \delta r^2 \right) 
$$
for any $t>0$. Restricting to $(x,t) \in \r(\frac{r}{2}) \subset \r(r)$, we deduce that, since $|x| < \frac{r}{2}$ and $t > 0$ in $\r(\frac{r}{2})$, 
$$
 v(x,t) +  \frac{F \delta r^2}{8 \lambda}  \geq \frac{7m \delta}{8}.
$$
Since $m \geq \inf_{ \r^+(r)} v$, it follows that 
%\Rightarrow  \frac{8}{7\delta} v(x,t) +  \frac{F  r^2}{7\lambda} & \geq m.
$$
\inf_{\r^+(r)} v \leq \frac{8}{7\delta} \inf_{\r(\frac{r}{2})} v + \frac{r^2 F}{7\lambda},
$$
which immediately implies \eqref{firstestimate1}.
\end{proof}

Since, on the sets $\r^+(r)$,  we are a positive distance away from the boundary $\{t=0\}$, we can obtain pointwise estimates for $v=\frac{u}{t}$ from the interior Harnack inequality established in Theorem \ref{thm:Harnack}. This is the content of the next proposition, which is the only place where we invoke the assumption \eqref{CordesLandis}.

\begin{Proposition}\label{secondestimate}
Assume \eqref{CordesLandis} holds. There exist structural constants $C> 1$ and $K>1$ such that, if $r>0$ and $w$ is a non-negative $C^2$-solution of $\Lop_A w = f$ in $\r(Kr)$ for $f \in L^{\infty}(\r(Kr),|x|^2)$, then the function $v := \frac{w}{t}$ satisfies
\[
\sup_{\r^+(r)} v \leq C \left(\inf_{\r^+(r)} v + r^2  ||f||_{L^{\infty}(\r(Kr),|x|^2)}  \right).
\]
\end{Proposition}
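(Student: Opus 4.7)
The plan is to reduce this Harnack-type statement for $v = w/t$ to the already-established interior Harnack inequality for $w$ (Theorem \ref{thm:Harnack}), via a chain-of-balls argument on $\r^+(r)$. The key observation is that on $\r^+(r)$ the variable $t$ is comparable to $r^2$ (it lies between $\delta r^2/2$ and $\delta r^2$), so $v$ differs from $r^{-2}w$ by a structural factor; thus a Harnack inequality for $w$ restricted to $\r^+(r)$ will automatically translate into one for $v$.

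First I would use the $\delta_r$-homogeneity (since $\r(r) = \delta_r \r(1)$ and the class of admissible operators is preserved under the induced change of variables) to reduce to $r = 1$. Then I would select a small structural $c \in (0,1)$ and a large structural $K > 1$ such that for every $z_0 = (x_0,t_0) \in \r^+(1)$ the enlarged ball $B_{K_H c}(z_0)$ is contained in $\r(K)$, where $K_H$ is the constant from Theorem \ref{thm:Harnack}. This is a direct computation from the definition of $d$: a point $(y,s) \in B_{K_H c}(z_0)$ satisfies $|y - x_0| < K_H c$ and $|s - t_0 - 2\langle \J x_0, y-x_0\rangle| < (K_H c)^2$, so $|y| \leq 1 + K_H c$ and $|s - t_0| \leq (K_H c)^2 + 2 K_H c$; picking $c$ small (e.g. so $(K_H c)^2 + 2K_H c \leq \delta/4$) keeps $s > 0$, and then $K$ large enough absorbs both constraints $|y| < K$ and $s < \delta K^2$.

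Next I would cover $\r^+(1)$ with a structurally bounded number $N$ of balls $B_c(z_i)$ centered at $z_i \in \r^+(1)$; this is possible because $\r^+(1)$ has $d$-diameter of order $1$ and is Euclidean-convex, hence path-connected. Applying Theorem \ref{thm:Harnack} on each $B_c(z_i)$ (admissible by the previous step) and chaining in the usual way along overlapping balls, one obtains, for any $p, q \in \r^+(1)$, an inequality of the form $w(p) \leq C_H^{N}\, w(q) + N C_H^N c^2 \|f\|_{L^\infty(\r(K))}$; taking $\sup$ in $p$ and $\inf$ in $q$ yields
\[
\sup_{\r^+(1)} w \leq C'\Bigl(\inf_{\r^+(1)} w + \|f\|_{L^\infty(\r(K))}\Bigr)
\]
with $C'$ structural. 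Converting this to an estimate for $v$ uses $t \in (\delta/2, \delta)$ on $\r^+(1)$ and the bound $\|f\|_{L^\infty(\r(K))} \leq K^2 \|f\|_{L^\infty(\r(K),|x|^2)}$ (since $|x| \leq K$ on $\r(K)$), yielding the scale-$1$ version of the desired inequality; scaling back by $\delta_r$ introduces the expected $r^2$ in front of the source term.

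The main obstacle is the geometric verification in the first step: one needs to confirm that in the anisotropic Heisenberg metric a ball of radius proportional to $1$ around any point of $\r^+(1)$ does not leak past the characteristic plane $\{t = 0\}$ nor out of the lateral boundary of $\r(K)$. Once this is established uniformly, the chain-of-balls argument proceeds by standard considerations, and the passage from a Harnack inequality for $w$ to one for $v = w/t$ follows immediately from the fact that $t$ is pinned between two structural multiples of $r^2$ on $\r^+(r)$.
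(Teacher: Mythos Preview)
Your proposal is correct and follows essentially the same route as the paper's proof: reduce to $r=1$ by dilation invariance, select a structural radius so that the $K_H$-enlarged balls around points of $\r^+(1)$ stay inside some $\r(K)$ (the paper does this via a compactness/positive-distance argument rather than your explicit metric computation, but the content is identical), cover $\overline{\r^+(1)}$ by finitely many such balls and chain the interior Harnack inequality of Theorem~\ref{thm:Harnack}, then convert the resulting Harnack inequality for $w$ into one for $v=w/t$ using $t\sim r^2$ on $\r^+(r)$ and $\|f\|_{L^\infty(\r(Kr))}\leq (Kr)^2\|f\|_{L^\infty(\r(Kr),|x|^2)}$. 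The paper also invokes the Euclidean convexity of $\r^+(1)$ to justify the chaining step, exactly as you do.
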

\begin{proof}
We claim there exist structural constants $\tilde{C}_H, K >1$ such that
\begin{equation}\label{Harnackcon2rett}
\sup_{\r^+(r)} w \leq \tilde{C}_H\left(\inf_{\r^+(r)} w + r^2 ||f||_{L^{\infty}(\r(Kr))}  \right)
\end{equation}
for any $r>0$. This basically follows from the inhomogeneous Harnack inequality in Theorem \ref{thm:Harnack} combined with a covering argument that allows us to rewrite the estimate in terms of the cylindrical sets $\r(r)$. Once we have \eqref{Harnackcon2rett}, it suffices to notice that, since $\frac{\delta}{2} r^2<t<\delta r^2$ in $\r^+(r)$, we have
$$\frac{\delta}{2} r^2 \sup_{\r^+(r)} v \leq \sup_{\r^+(r)} w \qquad \text{and} \qquad \inf_{\r^+(r)} w \leq \delta r^2 \inf_{\r^+(r)} v,$$
and so
\begin{align*}
\sup_{\r^+(r)} v&\leq \frac{2}{\delta r^2} \sup_{\r^+(r)} w \leq \tilde{C}_H\left(\frac{2}{\delta r^2} \inf_{\r^+(r)} w + \frac{2}{\delta} ||f||_{L^{\infty}(\r(Kr))}  \right)\\
&\leq \tilde{C}_H\left( 2 \inf_{\r^+(r)} v + \frac{2}{\delta} ||f||_{L^{\infty}(\r(Kr))}\right) \\
&  \leq  \tilde{C}_H\left( 2 \inf_{\r^+(r)} v + \frac{2K^2 r^2}{\delta} ||f||_{L^{\infty}(\r(Kr),|x|^2)} \right)
\end{align*}
where in the final inequality we used that $||f||_{L^{\infty}(\r(Kr))}\leq (Kr)^2 ||f||_{L^{\infty}(\r(Kr),|x|^2)}$. This shows the desired Harnack-type inequality for $v$ as in the statement of the Proposition with the choice $C=\frac{2K^2}{\delta} \tilde{C}_H$.

For the reader's convenience, we provide some details of the covering argument used to verify \eqref{Harnackcon2rett}. First notice that, by scale-invariance with respect to the family of dilations $\delta_r$, it is enough to show \eqref{Harnackcon2rett} for $r=1$. Since the compact set $\overline{\r^+(1)}$ is at positive distance from the half-space $\{(\xi,\tau)\,:\, \tau\leq 0\}$, we know there exists a positive constant $\rho$ (depending on $\delta$) such that
$$
B_\rho(z)\subset \H^n_+\quad \quad \text{for all } z\in \overline{\r^+(1)}.
$$
Hence, we can now choose $K>1$ (depending on $\delta, \rho$) such that
\begin{equation}\label{covdentrorett}
B_\rho(z)\subseteq \r(K) \quad \text{for all } z\in \overline{\r^+(1)}.
\end{equation}
Moreover, for the constant $K_H$ in Theorem \ref{thm:Harnack}, we consider the following open covering of $\overline{\r^+(1)}$ $$\left\{ B_{\frac{\rho}{K_H}}(z)\,:\, z\in \overline{\r^+(1)} \right\}.$$ 
By compactness, there exist $p\in \N$ and $z_1,\ldots, z_p \in \overline{\r^+(1)} $ such that $\r^+(1)\subseteq \bigcup_{i=1}^p B_{\frac{\rho}{K_H}}(z_i)$. We now apply the Harnack inequality, Theorem \ref{thm:Harnack}, on the balls $B_{\frac{\rho}{K_H}}(z_i)$. More precisely, let $u$ be a non-negative solution to $\Lop_A uw = f$ in $\r(K)$ for $f \in L^{\infty}(\r(K),|x|^2)\subseteq L^{\infty}(\r(K))$ and let $C_H>1$ denote the Harnack constant in Theorem  \ref{thm:Harnack}. Then, thanks to \eqref{covdentrorett}, we have
\begin{align*}
w(z) & \leq C_H\left(w(\zeta) + ||f||_{L^{\infty}(B_{\rho}(z_i))}\right)\\
& \leq C_H\left( w(\zeta) + ||f||_{L^{\infty}(\r(K))}\right)\quad \text{ for all } z,\zeta\in B_{\frac{\rho}{K_H}}(z_i), \ i= 1,\ldots,p.
\end{align*}
By applying the previous inequality a finite number of times, we infer \eqref{Harnackcon2rett} for $r=1$. The constant $\tilde{C}_H$ can be taken as $\tilde{C}_H=C_H(C_H+1)^{p-1}$ (we stress that $p$ and $C_H$ depend just on $\rho, K_H, \delta, Q, \Lambda, \lambda$). Indeed, if we pick any two points in $\r^+(1)$ we can consider the (Euclidean) segment connecting these points. By the convexity of $\r^+(1)$ and the covering property of $\bigcup_{i=1}^p B_{\frac{\rho}{K_H}}(z_i)$, such a segment is still contained in $\r^+(1)$ and we can look at the number $q\in\N$ of balls of the type $B_{\frac{\rho}{K_H}}(z_i)$ that intersect the segment; in our case, we always have $q\leq p$ since the balls $B_{\frac{\rho}{K_H}}(z_i)$ are (Euclidean) convex. We therefore need to apply the inequality displayed above at most $p$ times to conclude the proof.
\end{proof}

We are now ready to prove the main result of this section.

\begin{Theorem}\label{ScaleInvariantHolderEstimateNormalDerivative}
Assume \eqref{CordesLandis} holds. Suppose $u \in C^2(B_{4}(0)\cap \H^n_+) \cap C(\overline{B_{4}(0)\cap \H^n_+})$ solves
\[
\begin{cases}
\Lop_A u = f & \quad \text{in } B_{4}(0)\cap \H^n_+, \\
u = 0 & \quad \text{on } B_{4}(0)\cap \{t=0\},
\end{cases}
\]
for some $f \in L^{\infty}(B_{4}(0)\cap \H^n_+,|x|^2)$. Then $\partial_t u(0,0)$ exists. Moreover, there exist constants $C > 1$ and $\rho_0, \a \in(0,1)$ depending only on $Q, \lambda, \Lambda$ such that for all $z \in B_{\rho_0}(0)\cap \H^n_+$
\[
|u(z) - \partial_t u(0,0) t| \leq C\left( ||u||_{L^{\infty}(B_{4}(0)\cap \H^n_+)}  + ||f||_{L^{\infty}(B_{4}(0)\cap \H^n_+,|x|^{2})} \right) d^{2+\alpha}(z,(0,0)).
\]
\end{Theorem}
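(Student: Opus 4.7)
The plan is to adapt Caffarelli's proof of the boundary $C^{1,\alpha}$ (Krylov) estimate for uniformly elliptic equations to the anisotropic setting of the rectangles $\r(r)$, using Proposition \ref{firstestimate} as a boundary weak-Harnack and Proposition \ref{secondestimate} as the Harnack. The central object is the ratio $v := u/t$, which is bounded on a neighborhood of $0$ by Corollary \ref{nocond}. Once $v$ is shown to be $d$-H\"older continuous at the origin with seminorm controlled by $\|u\|_{L^\infty(B_4 \cap \H^n_+)} + \|f\|_{L^\infty(B_4 \cap \H^n_+, |x|^2)}$, multiplying by $t$ and using $t \leq d(\cdot,0)^2$ converts the H\"older bound on $v$ into the claimed $d^{2+\alpha}$-separation of $u$ from its intrinsic linear-in-$t$ Taylor polynomial; testing along $z = (0,t) \to (0,0)$ and using $u(0,0) = 0$ identifies the limit $\ell := \lim_{z\to 0,\,z\in\H^n_+} v(z)$ with $\partial_t u(0,0)$.

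Fix the constant $K>1$ from Proposition \ref{secondestimate}, set $\rho := \tfrac{1}{2K}$ and $r_k := \rho^k r_0$ with $r_0$ so small that $\r(r_k)\subset B_4(0)\cap \H^n_+$, and write $M_k := \sup_{\r(r_k)} v$, $m_k := \inf_{\r(r_k)} v$. The heart of the argument is the one-step oscillation contraction
\[
M_{k+1} - m_{k+1} \;\leq\; \mu\,(M_k - m_k) \;+\; C\,r_k^2\,\|f\|_{L^\infty(B_4 \cap \H^n_+, |x|^2)}
\]
for some structural $\mu \in (0,1)$. To prove it, introduce the two nonnegative auxiliary functions $w_1 := u - m_k t$ and $w_2 := M_k t - u$ on $\r(r_k)$; both vanish on $\{t=0\}$ and, since $\Lop_A t = 0$ (as in the proof of Lemma \ref{barriers}), satisfy $\Lop_A w_1 = f$ and $\Lop_A w_2 = -f$. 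Applying Proposition \ref{secondestimate} to $w_1/t = v-m_k$ and $w_2/t = M_k - v$ on the upper slice $\r^+(r_k/K)$ and summing yields
\[
\underset{\r^+(r_k/K)}{\osc}\, v \;\leq\; \theta\,(M_k-m_k) \;+\; C\,r_k^2\,\|f\|_{L^\infty(B_4 \cap \H^n_+, |x|^2)}, \qquad \theta := \frac{c_H-1}{c_H+1} \in (0,1),
\]
where $c_H$ is the Harnack constant of Proposition \ref{secondestimate}. Proposition \ref{firstestimate} applied to $w_1$ and $w_2$ at scale $r_k/K$ then converts the $\inf$ on $\r^+(r_k/K)$ into an $\inf$ on the full rectangle $\r(r_{k+1}) = \r(r_k/(2K))$, and analogously for the $\sup$; subtracting and plugging in the Harnack bound yields the contraction above with $\mu := 1 - \tfrac{\delta(1-\theta)}{2}$.

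Iterating the contraction and choosing any $\alpha \in (0,1)$ with $\rho^\alpha \geq \mu$ and $\alpha < 2$ (so the geometric sum of the $r_k^2$-errors telescopes) gives
\[
\underset{\r(r_k)}{\osc}\, v \;\leq\; C\,\rho^{k\alpha}\,\big(\|u\|_{L^\infty(B_4 \cap \H^n_+)} + \|f\|_{L^\infty(B_4 \cap \H^n_+, |x|^2)}\big),
\]
which is the Cauchy property for $M_k, m_k$ and yields the limit $\ell$; comparing the family $\r(r)$ with the metric ball family $B_r(0)$ delivers $|v(z) - \ell| \leq C\,d(z,0)^\alpha$ on a neighborhood of the origin in $\H^n_+$, and multiplying by $t$ finishes the proof. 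The main obstacle is the downward step: propagating the oscillation bound from the upper slice $\r^+(r_k/K)$, where Harnack is effective because one stays uniformly away from the characteristic plane $\{t=0\}$, into the boundary-touching rectangle $\r(r_{k+1})$ that accumulates at the characteristic origin. This is precisely what Proposition \ref{firstestimate}, built on the barriers $\phi_1,\phi_2$ of Lemma \ref{barriers}, is engineered to do, but the convex combination $(1-\tfrac{\delta}{2}) + \tfrac{\delta}{2}\theta$ must be tracked carefully so as to produce a genuine $\mu < 1$ while preserving the $r_k^2$-scaling of the error that ultimately determines the final exponent $\alpha$.
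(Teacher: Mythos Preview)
Your proposal is correct and follows essentially the same approach as the paper: both run Caffarelli's iteration on the ratio $v=u/t$ over the rectangles $\r(r)$, using Corollary~\ref{nocond} to bound $v$, Proposition~\ref{secondestimate} as the interior Harnack on the upper slice, and Proposition~\ref{firstestimate} to push the estimate down to the boundary-touching rectangle at the next scale. The only differences are cosmetic bookkeeping---the paper substitutes the Proposition~\ref{firstestimate} bound directly into the $\inf$ term of the Harnack inequality before adding, obtaining the contraction constant $\gamma=(\tilde C_2-1)/\tilde C_2$, whereas you first extract the oscillation bound on $\r^+(r_k/K)$ with $\theta=(c_H-1)/(c_H+1)$ and then apply Proposition~\ref{firstestimate}, arriving at $\mu=1-\tfrac{\delta}{2}(1-\theta)$; both lead to the same oscillation decay and the standard iteration (\cite[Lemma~8.23]{GTBook}) finishes the argument.
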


\begin{Remark}
As the proof below will make evident, Theorem \ref{ScaleInvariantHolderEstimateNormalDerivative} relies on the Cordes-Landis assumption \eqref{CordesLandis} only via Proposition \ref{secondestimate} and, even more indirectly, on the Harnack inequality, Theorem \ref{thm:Harnack}. If the coefficients of $\L_A$ are more regular (say $d$-H\"older continuous), then one can use the Harnack inequality from \cite{Bonfiglioli-Uguzzoni-07} instead.
\end{Remark}

\begin{proof}[Proof of Theorem \ref{ScaleInvariantHolderEstimateNormalDerivative}] 
Let $K>1$ be the constant provided by Proposition \ref{secondestimate}, and fix
$$
\rho= \frac{1}{2K} \left(\frac{1}{1+\delta^2}\right)^{\frac{1}{4}}.
$$
We also denote
$$
v := \frac{u}{t}.
$$
By Corollary \ref{nocond} we know that $v\in L^\infty(B_{1}(0)\cap \H^n_+)$. Keeping in mind the definition of $\rho$ and of the sets $\r(\cdot)$, we have  $\r(2K\rho)\subseteq B_{1}(0)\cap \H^n_+$ and $v\in L^\infty(\r(2K\rho))$ with the estimate
\begin{equation}\label{usolip}
||v||_{L^\infty(\r(2K\rho))}\leq
C_1 \left( ||u||_{L^{\infty}(B_{4}(0)\cap \H^n_+)} + \frac{1 }{ \lambda}  ||f||_{L^{\infty}(B_{4}(0)\cap \H^n_+,|x|^2)} \right),
\end{equation}
where $C_1>1$ is the constant (named $C$) in Corollary \ref{nocond}. We can thus define, for any $r \in (0,2K\rho]$,
\[m(r) := \inf_{\r(r)} v, \qquad\mbox{and} \qquad M(r) := \sup_{\r(r)} v.\]
Fix an arbitrary $r\in (0,\rho]$ and consider the function 
\[w_- = u - m(2Kr) t,\]
which is non-negative and solves $\L_Aw_- = f$ in $\r(2Kr)$. Applying Proposition \ref{secondestimate} to $w_-$ (let us call $C_2>1$ the constant named $C$ in Proposition \ref{secondestimate}), we have
$$\sup_{\r^+(2r)} (v-m(2Kr)) \leq C_2 \left( \inf_{\r^+(2r)} (v-m(2Kr)) + 4r^2   ||f||_{L^{\infty}(\r(2Kr),|x|^{2})}   \right).$$
We can also apply Proposition \ref{firstestimate} to $w_-$ to obtain
\begin{align*}
\inf_{\r^+(2r)} (v-m(2Kr)) &  \leq \frac{2}{\delta} \inf_{\r(r)} (v-m(2Kr)) + \frac{4r^2}{\lambda} ||f||_{L^{\infty}(\r(2r),|x|^{2})} \\
& = \frac{2}{\delta} \left[ m(r)-m(2Kr) \right] + \frac{4r^2}{\lambda} ||f||_{L^{\infty}(\r(2Kr),|x|^{2})}. 
\end{align*}
Substituting into the previous estimate, we obtain
\begin{equation}\label{first-inequality-for-oscillation}
    \sup_{\r^+(2r)} (v-m(2Kr)) \leq \tilde{C}_2\left( m(r)-m(2Kr) + r^2 ||f||_{L^{\infty}(\r(2Kr),|x|^{2})}  \right),
\end{equation}
where $\tilde{C}_2=\frac{2C_2}{\delta}\max\{1, \frac{2\delta(1+\lambda)}{\lambda}\}>1$. 

Carrying out similar arguments with the function 
\[w_+ = M(2Kr) t - u,\]
which is non-negative and solves $\L_Aw_+ = -f$ in $\r(2Kr)$, we obtain
\begin{equation}\label{second-inequality-for-oscillation}
\sup_{\r^+(2r)} (M(2Kr)-v) \leq \tilde{C}_2 \left(M(2Kr) - M(r) + r^2  ||f||_{L^{\infty}(\r(2Kr),|x|^{2})}  \right).
\end{equation}
Adding the inequalities \eqref{first-inequality-for-oscillation} and \eqref{second-inequality-for-oscillation} and using $\underset{\r^+(2r)}{\osc} v \geq 0$, we find that
\[
M(2Kr)-m(2Kr) \leq \tilde{C}_2 \left(\left[M(2Kr) - m(2Kr) \right] - \left[M(r) - m(r) \right] + 2r^2  ||f||_{L^{\infty}(\r(2Kr),|x|^{2})} \right).
\]
Setting $$\gamma := \frac{\tilde{C}_2-1}{\tilde{C}_2} \in (0,1),$$ we obtain the following decay of oscillation for the function $v$:
$$\underset{\r(r)}{\osc} v \leq \gamma \ \underset{\r(2Kr)}{\osc} v +  2 r^2  ||f||_{L^{\infty}(\r(2Kr),|x|^{2})}\quad\mbox{ for all }0<r\leq \rho.
$$
A standard iteration argument (see, for instance, \cite[Lemma 8.23]{GTBook}) implies that for some universal constants $\bar{C} > 1$ and $\alpha \in (0,1)$, we have
$$\underset{\r(r)}{\osc} v \leq \bar{C} r^{\alpha} \left( ||v||_{L^{\infty}(\r(2K\rho))}  +  ||f||_{L^{\infty}(\r(2K\rho),|x|^{2})}\right).
$$
Together with \eqref{usolip}, this yields, for some universal constant $C>1$,
\begin{equation}\label{bellobello}
\underset{\r(r)}{\osc} \left(\frac{u}{t}\right) \leq C r^{\alpha} \left( ||u||_{L^{\infty}(B_{4}(0)\cap \H^n_+)}  +  ||f||_{L^{\infty}(B_{4}(0)\cap \H^n_+,|x|^{2})}\right) \quad\mbox{for all }0<r\leq \rho.
\end{equation}
Since $|v(z)-v(z')|\leq \underset{\r(r)}{\osc}v$ for any $z,z'\in \r(r)$, by the Cauchy criterion for existence of limits, and keeping in mind that $u(0,0)=0$, we realize that
$$
\exists \lim_{\substack{(x,t)\to (0,0) \\ (x,t)\in \H^n_+}} \frac{u(x,t)}{t}= \lim_{\substack{(x,t)\to (0,0) \\ (x,t)\in \H^n_+}} \frac{u(x,t)-u(0,0)}{t}=:\partial_t u(0,0)\in\R.
$$
Therefore, we deduce from \eqref{bellobello} that for all $(x,t)\in \r(r)$ and $0 < r \leq \rho$,
\[
\bigg|\frac{u(x,t)}{t} -  \partial_t u(0,0) \bigg| \leq C r^{\alpha}  \left( ||u||_{L^{\infty}(B_{4}(0)\cap \H^n_+)}  +  ||f||_{L^{\infty}(B_{4}(0)\cap \H^n_+,|x|^{2})}\right). 
\]
If we set $\rho_0=\sqrt{\delta} \rho$, then for every $z\in B_{\rho_0}(0)\cap \H^n_+$ we have $z\in \r\left(\frac{d(z,0)}{\sqrt{\delta}}\right)$ and $\frac{d(z,0)}{\sqrt{\delta}}<\rho$. It follows that for all $z\in B_{\rho_0}(0)\cap \H^n_+$
$$
\left|u(z) -  \partial_t u(0,0) t \right| \leq \frac{C}{\delta^{\frac{\alpha}{2}}}\, t\, d^{\alpha}(z,(0,0))  \left( ||u||_{L^{\infty}(B_{4}(0)\cap \H^n_+)}  +  ||f||_{L^{\infty}(B_{4}(0)\cap \H^n_+,|x|^{2})}\right).
$$
Since $t\leq d^2((x,t),(0,0))$, this completes the proof of the theorem.
\end{proof}

As mentioned in the introduction, Theorem \ref{ScaleInvariantHolderEstimateNormalDerivative} implies $u$ separates from its intrinsic second order Taylor polynomial $T_2u$ at a rate of $d^{2+\a}$, and so can be thought of as a $C^{2,\a}$ type estimate.
It is also remarkable that although the function $u$ is initially assumed to be only continuous up to the boundary $\{t=0\}$, it ends up having second order differentiability properties at the origin.

It is known that the order $2$ is critical for the regularity at $(0,0)$ of solutions in the half-space $\H^n_+$, which can be seen as a by-product of the fact that the linear function $\bar{u}(x,t)=t$ is $\L_A$-harmonic and homogeneous of degree $2$; we refer the interested reader to the discussions in \cite[Remarque on pg. 106]{Gaveau} and \cite[pg. 235]{Jerison81-2}. Jerison initiated in \cite{Jerison81-2} a thorough analysis of the critical degree for the regularity of the solutions to $\Delta_X u=0$ at characteristic boundary points in scale-invariant domains like $\{t>M |x|^2\}$. In the case of the half-space $\H^n_+$, in \cite[Section 5]{Jerison81-2}, Jerison shows that the solutions of the Dirichlet problem 
\[
\begin{cases}
\Delta_X u = 0 & \quad \text{in } B_1(0) \cap \H^n_+\\
u = g & \quad \text{in } B_1(0) \cap \{t =0\}
\end{cases}
\]
for generic $C^2$ boundary data $g$ may fail to have second order regularity around $(0,0)$. We can use Theorem \ref{ScaleInvariantHolderEstimateNormalDerivative} to identify a class of boundary data $g$ for which the solution $u$ of the Dirichlet problem is guaranteed to enjoy second order estimates.

\begin{Corollary}\label{ScaleInvariantHolderEstimateNormalDerivativeWithDirichletData}
Assume $g: \{x\in\R^{2n}\,:\, |x|\leq 4\}\to \R$ is a $C^2$-function such 
that  
\begin{equation}\label{Dirichlet-data-class}
\Delta g \in L^{\infty}(\{|x|<4\},|x|^2).
\end{equation}
Suppose $u\in C^2(B_{4}(0)\cap \H^n_+) \cap C(\overline{B_{4}(0)\cap \H^n_+})$ solves
\[
\begin{cases}
\Delta_X u = 0 & \quad \text{in } B_{4}(0)\cap \H^n_+, \\
u = g & \quad \text{on }B_{4}(0)\cap \{t=0\}.
\end{cases}
\]
Then $\partial_t u(0,0)$ exists. Moreover, there exist constants $C > 1$, $\rho_0\in(0,1)$, and $\alpha \in (0,1)$ depending only on $Q$ such that
\[
\frac{|u(x,t) - g(x) - \partial_t u(0,0) t|}{d^{2+\alpha}((x,t),(0,0)) } \leq C\left( ||u||_{L^{\infty}(B_{4}(0)\cap \H^n_+)}+ ||g||_{L^{\infty}(\{|x|<4\})}  + ||\Delta g||_{L^{\infty}(\{|x|<4\},|x|^{2})} \right)
\]
for all $(x,t) \in B_{\rho_0}(0)\cap \H^n_+$.
\end{Corollary}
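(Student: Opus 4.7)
The plan is to reduce the problem to Theorem \ref{ScaleInvariantHolderEstimateNormalDerivative} by subtracting off the boundary data. Extend $g$ to a function on $B_4(0)\cap \overline{\H^n_+}$ that is independent of $t$: define $\tilde{g}(x,t) := g(x)$. Since $B_4(0)$ is contained in $\{|x|<4\}\times\R$, this extension is well-defined and $C^2$ on $B_4(0)$. Let $v := u - \tilde{g}$; then $v \in C^2(B_4(0)\cap \H^n_+) \cap C(\overline{B_4(0)\cap \H^n_+})$ and $v = 0$ on $B_4(0)\cap\{t=0\}$ by construction.

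The key computation is to check that $\Delta_X \tilde{g} = \Delta g$ (the usual Euclidean Laplacian in $\R^{2n}$). Indeed, since $\tilde{g}$ does not depend on $t$,
\[
X_i \tilde{g}(x,t) = \partial_{x_i} g(x) + 2(\J x)_i \partial_t \tilde{g}(x,t) = \partial_{x_i} g(x),
\]
and hence $X_j X_i \tilde{g}(x,t) = \partial_{x_j}\partial_{x_i} g(x)$, yielding $\Delta_X \tilde{g} = \Delta g$. Consequently, $v$ solves $\Delta_X v = -\Delta g =: f$ in $B_4(0)\cap \H^n_+$, and by hypothesis \eqref{Dirichlet-data-class} we have $f \in L^\infty(B_4(0)\cap \H^n_+,|x|^2)$ with $\|f\|_{L^{\infty}(\cdot,|x|^2)} = \|\Delta g\|_{L^{\infty}(\{|x|<4\},|x|^2)}$.

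Applying Theorem \ref{ScaleInvariantHolderEstimateNormalDerivative} to $v$ (noting that $\Delta_X$ corresponds to $A\equiv \mathbb{I}_{2n}$, which trivially satisfies \eqref{CordesLandis}), we obtain that $\partial_t v(0,0)$ exists and satisfies a $d^{2+\alpha}$-estimate. Since $\tilde{g}$ is independent of $t$, we have $\partial_t \tilde{g}(0,0) = 0$, so $\partial_t u(0,0) = \partial_t v(0,0)$ exists. Substituting $v = u - \tilde{g}$ and using the triangle inequality
\[
\|v\|_{L^\infty(B_4(0)\cap \H^n_+)} \leq \|u\|_{L^\infty(B_4(0)\cap \H^n_+)} + \|g\|_{L^\infty(\{|x|<4\})}
\]
gives the stated bound with the same $\rho_0$ and $\alpha$ as in Theorem \ref{ScaleInvariantHolderEstimateNormalDerivative}.

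There is no substantial obstacle here — the only point that warrants verification is the identity $\Delta_X \tilde{g} = \Delta g$, which relies specifically on the fact that the first-order term $2(\J x)_i \partial_t$ in each horizontal vector field $X_i$ annihilates any function of $x$ alone. The rest is bookkeeping on the norms.
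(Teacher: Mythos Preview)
Your proof is correct and follows essentially the same approach as the paper: subtract the $t$-independent extension of $g$, observe that $\Delta_X$ acts as the Euclidean Laplacian on such functions so that the difference satisfies $\Delta_X v = -\Delta g$ with zero boundary data, and apply Theorem \ref{ScaleInvariantHolderEstimateNormalDerivative}. The paper's proof is terser but identical in substance; your added verifications (the identity $\Delta_X \tilde{g} = \Delta g$ and the triangle inequality on norms) are exactly the bookkeeping the paper leaves implicit.
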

\begin{proof}
Consider the function $w(x,t) = u(x,t) - g(x)$. Then $w$ solves
\[
\begin{cases}
\Delta_X w = -\Delta g & \quad \text{in } B_{4}(0)\cap \H^n_+, \\
w = 0 & \quad \text{on }B_{4}(0)\cap \{t=0\}.
\end{cases}
\]
The assumption \eqref{Dirichlet-data-class} allows us to apply Theorem \ref{ScaleInvariantHolderEstimateNormalDerivative} with $f(x,t)=-\Delta g(x)$ and the conclusion follows in a straightforward way since $\partial_t u=\partial_t w$.
\end{proof}

Expanding our results above to encompass bounded source terms $f$ that reside outside the $L^{\infty}(\cdot,|x|^2)$ class promises to be challenging because of the following concrete counterexample.
\begin{Example}\label{example}
Recall the function $\phi(x,t) = |x|^4 + t^2$ from \eqref{defPhiPsi}. Fix $\e> 0$. For $q \in (0,1)$ define
\[u_{\e}(x,t) := t \phi^q(x,t+\e)  = t d^{4q}((x,t), (0,-\e)).\]
We have
\[u_{\e}(x,0) = 0 \quad \text{and} \quad \partial_t u_{\e}(0,0) = \phi(0,\e)^q =\e^{2q}.
\]
A straightforward, but tedious calculation using \eqref{(i)}, \eqref{(ii)}, and translation invariance shows
\[\Delta_X u_{\e}(x,t) = 8 q\phi^{q-1}(x,t+\e) |x|^2 \left[t (2q+n) +2(t+\e)\right]\]
Since
\begin{align*}
8 q\phi^{q-1}(x,t+\e) |x|^2 \left[t (2q+n) +2(t+\e)\right] & \leq 8 q\phi^{q-1}(x,t+\e) |x|^2 (2q+n+2)(t+\e) \\
& \leq 4q(2q+n+2) \phi^q(x,t+\e),
\end{align*}
we conclude that the standard $L^{\infty}$ norm $||\Delta_X u_{\e}||_{L^{\infty}(B_4(0)\cap\H^n_+)}$ is bounded uniformly in $\e>0$ (and in $q\in (0,1)$), whereas the weighted $L^{\infty}$ norm $||\Delta_X u_{\e}||_{L^{\infty}(B_4(0)\cap\H^n_+, |x|^2)}$ becomes unbounded as $\e \to 0^+$ (for any fixed $q\in (0,\frac{1}{2})$).
Meanwhile,
\[
\left|\frac{u_{\e}(0,t)}{t}-\partial_t u_{\e}(0,0)\right| = |(t+\e)^{2q} - \e^{2q}|.
\]
It follows that, for any $C>0$ and $\a\in (0,1)$, we can pick $q < \a/4$ and make the previous expression bigger than $C d^{\a}((0,t),(0,0))=C t^{\a/2}$ for some positive $\e$ and $t$ small enough. This shows that we cannot substitute $||f||_{L^{\infty}(B_{4}(0)\cap \H^n_+,|x|^{2})}$ with $||f||_{L^{\infty}(B_{4}(0)\cap \H^n_+)}$ in Theorem \ref{ScaleInvariantHolderEstimateNormalDerivative}.
\end{Example}

The above discussion suggests that tackling higher order boundary regularity in general domains with characteristic points will be a delicate task. We plan to address this problem in a future work by restricting our attention to suitable perturbations of the flat scenario studied in this paper.

%%%%%%%%%%%%%%%%%%%%%%%%%%%%%%%%%%
%%%%%%%%%%%%%%%%%%%%%%%%%%%%%%%%%%

\bibliographystyle{amsplain}
\bibliography{AbedinTralli}

%%%%%%%%%%%%%%%%%%%%%%%%%%%%%%%%%%
%%%%%%%%%%%%%%%%%%%%%%%%%%%%%%%%%%

\end{document}